\numberwithin{equation}{section}
\newtheorem{theorem}{Theorem}[section]
\newtheorem{lemma}[theorem]{Lemma} 
\newtheorem{proposition}[theorem]{Proposition} 
\newtheorem{corollary}[theorem]{Corollary} 
\theoremstyle{definition}
\newtheorem{definition}[theorem]{Definition} 
\newtheorem{notation}[theorem]{Notation} 
\newtheorem{remark}[theorem]{Remark} 
\newtheorem{example}[theorem]{Example}
\newcommand{\C}{\mathbb{C}} 
\newcommand{\Z}{\mathbb{Z}}
\newcommand{\N}{\mathbb{N}}
\newcommand{\J}{\mathbb{J}}
\newcommand{\K}{\mathbb{K}}
\newcommand{\LL}{\mathbb{L}}
\newcommand{\PP}{\mathbb{P}} 
\newcommand{\Q}{\mathbb{Q}} 
\newcommand{\R}{\mathbb{R}}
\newcommand{\SSS}{\mathbb{S}}
\newcommand{\T}{\mathbb{T}}
\newcommand{\X}{\mathbb{X}}
\newcommand{\LLeff}{\LL_{\rm eff}}
\newcommand{\bc}{\mathbf{c}} 
\newcommand{\f}{\mathbf{f}} 
\newcommand{\bg}{\mathbf{g}} 
\newcommand{\bone}{\mathbf{1}} 
\newcommand{\bp}{\mathbf{p}} 
\newcommand{\bq}{\mathbf{q}} 
\newcommand{\bt}{\mathbf{t}}
\newcommand{\btau}{\boldsymbol{\tau}}
\newcommand{\bL}{\mathbf{L}}
\newcommand{\sfL}{\mathsf{L}}	
\newcommand{\tbtau}{\tilde{\btau}}
\newcommand{\te}{\tilde{e}} 
\newcommand{\tf}{\tilde{f}}
\newcommand{\tq}{\tilde{q}}
\newcommand{\vecalpha}{{\vec{\alpha}}} 
\newcommand{\vecbc}{{\vec{\bc}}} 
\newcommand{\veccD}{{\vec{\cD}}}
\newcommand{\veck}{{\vec{k}}} 
\newcommand{\vecr}{{\vec{r}}} 
\newcommand{\vecs}{{\vec{s}}} 
\newcommand{\vecE}{{\vec{E}}}
\newcommand{\vecG}{{\vec{\Gamma}}}
\newcommand{\vecL}{{\vec{L}}}
\newcommand{\vecO}{{\vec{\cO}}}
\newcommand{\vecV}{{\vec{V}}} 
\newcommand{\vecW}{{\vec{W}}}
\newcommand{\cA}{\mathcal{A}}
\newcommand{\cB}{\mathcal{B}}
\newcommand{\cC}{\mathcal{C}}
\newcommand{\cD}{\mathcal{D}}
\newcommand{\cF}{\mathcal{F}} 
\newcommand{\cH}{\mathcal{H}}
\newcommand{\cI}{\mathcal{I}}
\newcommand{\cJ}{\mathcal{J}}
\newcommand{\cL}{\mathcal{L}}
\newcommand{\cM}{\mathcal{M}}
\newcommand{\cO}{\mathcal{O}} 
\newcommand{\cQ}{\mathcal{Q}}
\newcommand{\cS}{\mathcal{S}}
\newcommand{\cT}{\mathcal{T}}
\newcommand{\cU}{\mathcal{U}}
\newcommand{\cW}{\mathcal{W}}
\newcommand{\cX}{\mathcal{X}}
\newcommand{\hcS}{\widehat{\cS}}
\newcommand{\hbS}{\widehat{\SSS}}
\newcommand{\hS}{\widehat{S}}
\newcommand{\ovbeta}{\overline{\beta}}
\newcommand{\ovcM}{\overline{\cM}}
\newcommand{\ab}{{\alpha,\beta}}
\newcommand{\acb}{{\alpha\cup\beta}}
\newcommand{\dab}{d_{\alpha\beta}}
\newcommand{\paba}{p_{\alpha\cup\beta,\alpha}}
\newcommand{\pabb}{p_{\alpha\cup\beta,\beta}}
\newcommand{\KL}{\K}
\newcommand{\TL}{{\T}}		
\newcommand{\TLd}{{\T'}}
\newcommand{\XLO}{\X_\sfL(\vecO)}	
\newcommand{\XLV}{\X_\sfL(\vecV)}
\newcommand{\barA}{\bar{A}}
\newcommand{\adj}{\operatorname{adj}} 
\newcommand{\ch}{\operatorname{ch}} 
\newcommand{\eff}{{\operatorname{eff}}}
\newcommand{\ev}{\operatorname{ev}} 
\newcommand{\extEff}{\Eff^{\operatorname{ext}}}
\newcommand{\id}{\operatorname{id}} 
\newcommand{\fix}{\operatorname{fix}}
\newcommand{\loc}{\operatorname{loc}} 
\newcommand{\mrk}{\operatorname{mark}}
\newcommand{\mov}{\operatorname{mov}} 
\newcommand{\pol}{\operatorname{pol}}
\newcommand{\pr}{\operatorname{pr}}
\newcommand{\pt}{\operatorname{pt}} 
\newcommand{\rank}{\operatorname{rank}}
\newcommand{\tw}{\operatorname{tw}}
\newcommand{\val}{\operatorname{val}} 
\newcommand{\vir}{\operatorname{vir}} 
\newcommand{\Bl}{\operatorname{Bl}}
\newcommand{\Cont}{\operatorname{Cont}}
\newcommand{\Def}{\operatorname{Def}}
\newcommand{\Ext}{\operatorname{Ext}}
\newcommand{\Res}{\operatorname{Res}}
\newcommand{\Aut}{\operatorname{Aut}} 
\newcommand{\DG}{\operatorname{DG}} 
\newcommand{\Eff}{\operatorname{Eff}} 
\newcommand{\Frac}{\operatorname{Frac}} 
\newcommand{\Hom}{\operatorname{Hom}}
\newcommand{\Ker}{\operatorname{Ker}}
\newcommand{\Ob}{\operatorname{Ob}}
\newcommand{\Prin}{\operatorname{Prin}}
\def\corr#1{\left\langle#1 \right\rangle}
\begin{document} 
\title{A mirror theorem for non-split toric bundles}

\author{Yuki Koto} 
\email{ykoto@gate.sinica.edu.tw}
\address{Institute of Mathematics, Academia Sinica, Astronomy-Mathematics Building, No.\ 1, Sec.\ 4, Roosevelt Road, Taipei 10617, Taiwan.}

\begin{abstract} 
We construct an $I$-function for toric bundles obtained as a fiberwise GIT quotient of a (not necessarily split) vector bundle.
This is a generalization of Brown's $I$-function for split toric bundles \cite{Brown} and the $I$-function for non-split projective bundles \cite{IK:quantum}.
In order to prove the mirror theorem, we establish a characterization of points on the Givental Lagrangian cones of toric bundles and prove a mirror theorem for the twisted Gromov-Witten theory of a fiber product of projective bundles.
The former result generalizes Brown's characterization for split toric bundles \cite{Brown} to the non-split case. 
\end{abstract} 

\maketitle 
\tableofcontents

\section{Introduction}
\label{sec:intro}

The genus-zero Gromov-Witten theory of a smooth projective variety $X$ plays a significant role in symplectic geometry, algebraic geometry and mirror symmetry.
It can be studied by a \emph{mirror theorem} \cite{Givental:mirror}, that is, by finding a convenient point (called an \emph{I-function}) on the Givental Lagrangian cone $\cL_X$ \cite{Givental:symplectic}.	
The cone $\cL_X$ is a Lagrangian submanifold of an infinite-dimensional symplectic vector space $\cH_X$, called the Givental space, and is defined by genus-zero gravitational Gromov-Witten invariants. 
A mirror theorem for $X$ enables us to compute genus-zero Gromov-Witten invariants of $X$ and study quantum cohomology.

The $I$-function for a smooth (semi-)projective toric variety $X$ \cite{Givental:mirror,Iritani:shift,CCIT:mirror} can be explicitly described as a hypergeometric series associated with the fan defining $X$.
A relative version of the toric mirror theorem has also been extensively studied.
Let $\LL$ be a free abelian group, $D_1,\dots,D_N$ be elements of $\LL^\vee=\Hom(\LL,\Z)$ and let $\omega\in\LL^\vee\otimes\R$.
If the triple $\sfL=(\LL^\vee,D,\omega)$ is smooth (in the sense of Definition \ref{def:toric}), $\sfL$ determines an embedding of tori $\K=\Hom(\LL^\vee,\C^\times)\hookrightarrow\T=(\C^\times)^N$ and a smooth semi-projective toric variety $X_\sfL=\C^N/\!/_\omega\K$ with a $\T$-action.
For vector bundles $V_1,\dots,V_N$ over a smooth projective variety $B$, we write $\XLV$ for the toric bundle constructed as a fiberwise GIT quotient $(\bigoplus_{i=1}^NV_i)/\!/_\omega\K$.
Note that $\XLV$ is endowed with the $\T$-action induced by the diagonal $\T$-action on $\bigoplus_{i=1}^NV_i$. 
When $V_1,\dots,V_N$ are all line bundles, we call $\XLV$ a \emph{split} toric bundle.

Brown \cite{Brown} proved the mirror theorem for split toric bundles, which had been conjectured by Elezi \cite{Elezi} for the projective bundle case.
Iritani and the author \cite{IK:quantum} constructed an $I$-function for non-split projective bundles.
In this paper, we generalize these works to a (possibly non-split) toric bundle $\XLV$.

\begin{theorem}[Theorem \ref{thm:mirror_thm}]
\label{thm:intro_mirror}
Let $\sfL=(\LL^\vee,D,\omega)$ and $\XLV$ be as above.
We assume that the dual of the bundle $V=\bigoplus_{i=1}^NV_i\to B$ is generated by global sections.
Let $\T$ act on $V$ diagonally, and let $-z I_V^\lambda(-z)$ be a point on the $\T$-equivariant Givental cone of $V$ such that $I_V^\lambda$ depends polynomially on the equivariant parameters $\lambda_1,\dots,\lambda_N$ of $\T$.
Define the $H^*_\T(\XLV)$-valued function $(I_V^\lambda)\sphat\ $ by
\[
(I_V^\lambda)\sphat\ (z) = e^{\sum_{i=1}^Nt_iu_i/z} \sum_{\ell\in\LL} \frac{\tq^\ell e^{\sum_{i=1}^ND_i(\ell)\cdot t_i}}{\prod_{i=1}^N\prod_{c=1}^{D_i(\ell)}\prod_{ \substack{\delta\colon\mathrm{Chern\ roots} \\ \mathrm{of\ } V_i} } (u_i + \delta + cz) } \cdot I_V^{u+D(\ell)z}
\]
where $\tq$ denotes the (extended) Novikov variable for the fiber, $u_i$ denotes the $i$-th $\T$-equivariant relative toric divisor and $I_V^{u+D(\ell)z}$ denotes the function $I_V^\lambda$ with replaced $\mu_i$ with $u_i+D_i(\ell)z$ for $1\leq i\leq N$.
Then, $-z(I_V^\lambda)\sphat\ (-z)$ represents a point on the $\T$-equivariant Givental Lagrangian cone $\cL_{\XLV,\T}$.
\end{theorem}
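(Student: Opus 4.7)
The plan is to reduce the verification of $-z(I_V^\lambda)\sphat\ (-z)\in\cL_{\XLV,\T}$ to the characterization of points on the $\T$-equivariant Givental cone of $\XLV$ proved earlier in the paper (the non-split analogue of Brown's criterion, one of the two main auxiliary results advertised in the abstract). Roughly, that characterization should say that an $H^*_\T(\XLV)$-valued Laurent series $F(z)$ belongs to $\cL_{\XLV,\T}$ if and only if (a) a suitable ``base'' specialization of $F$ is the image of a point on the Givental cone of an auxiliary space built from the fiber product $\prod_B\PP(V_i^\vee)$, and (b) the poles of $F$ in $z$ along divisors of the form $u_i+\delta+cz=0$ satisfy Brown-type recursion relations: the residue at such a pole equals, up to explicit factors coming from the virtual normal bundle of a $\T$-fixed locus, the evaluation of $F$ at a Novikov degree shifted by $c$ in the direction determined by the pole.

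First, I would identify the poles of $(I_V^\lambda)\sphat\ (z)$. The denominator $\prod_{i,c,\delta}(u_i+\delta+cz)$ produces simple poles at $z=-(u_i+\delta)/c$ for each $i$, Chern root $\delta$ of $V_i$, and $c\geq1$. A direct algebraic manipulation of the hypergeometric series — separating out the factor at $(i,\delta,c)$ and re-indexing the sum by $\ell\mapsto\ell+c\cdot(\text{ray})$ — should exhibit the residue as a shift of $(I_V^\lambda)\sphat\ $ itself, matching the recursion required by the characterization. This is the non-split generalization of the residue calculation in \cite{Brown}, but now with every pole accompanied by a full tower indexed by the Chern roots $\delta$ of $V_i$, corresponding to the way a $\T$-fixed multiple-cover component contracts onto a fiber of $\XLV\to B$.

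Second, to verify the base condition, I would isolate the $\ell=0$ term, which yields $e^{\sum_it_iu_i/z}I_V^u(z)$. By the hypothesis that $-zI_V^\lambda(-z)\in\cL_{V,\T}$ and the polynomial dependence of $I_V^\lambda$ on $\lambda$, the substitution $\lambda_i\mapsto u_i$ is legitimate. Combined with the mirror theorem for the twisted Gromov--Witten theory of $\prod_B\PP(V_i^\vee)$ (the other key ingredient stated in the abstract), this supplies the ``base'' input demanded by the characterization and completes the check.

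The main obstacle will be the precise matching in step one. In the non-split case the $\T$-fixed loci of the relevant moduli stack are no longer sections indexed by the rays of the fan but genuine projective-bundle strata carrying non-trivial Chern classes of the $V_i$, so virtual localization produces hypergeometric contributions in those Chern roots. Matching these contributions term by term against the residues of $(I_V^\lambda)\sphat\ $, while preserving polynomial dependence on the equivariant parameters $\lambda_i$ throughout (so that non-equivariant limits make sense), is the technical heart of the argument; this is exactly where the twisted mirror theorem for the fiber product of projective bundles enters as the essential bookkeeping device.
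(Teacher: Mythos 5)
Your overall strategy is the right one, and matches the paper: use the Brown-type characterization theorem and verify the resulting pole/recursion/base conditions, with the twisted mirror theorem for fiber products of projective bundles supplying the crucial input. However, several of the structural details in your sketch are off in ways that matter.

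First, the characterization theorem (Theorem \ref{thm:characterization}) is not a statement about $(I_V^\lambda)\sphat\ $ itself but about its \emph{restrictions} $\iota_\alpha^*(I_V^\lambda)\sphat\ $ to each fixed locus $\XLV_\alpha$, $\alpha\in F_\sfL$. All three conditions \textbf{(C1)}, \textbf{(C2)}, \textbf{(C3)} are phrased in terms of the collection $\{\iota_\alpha^*\f\}_{\alpha}$. In particular, the poles that must be controlled are poles of $\iota_\alpha^*\f$ as a function of $z$ valued in $H^*_\T(\XLV_\alpha)$ localized over $\Frac(H^*_\T(\pt))$; they sit at $z=\lambda_{\alpha\beta}/k$, the purely equivariant part of $\iota_\alpha^*u_i+\delta$, rather than at ``$z=-(u_i+\delta)/c$'' in the ambient cohomology of $\XLV$. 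The recursion relation \textbf{(C2)} then relates $\Prin_{z=\lambda_{\alpha\beta}/k}\iota_\alpha^*\f$ to the restriction $\iota_\beta^*\f$ at an \emph{adjacent} fixed locus $\beta$, via a pushforward along $p_{\alpha\cup\beta,\alpha}$ and an evaluation at $z=c_1^\T(L_{\alpha\beta})/k$; this is not a self-recursion of $\f$ with shifted Novikov degree.

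Second, and more seriously, the ``base'' condition \textbf{(C3)} is not obtained by isolating the $\ell=0$ term. It requires the \emph{entire} restriction $\iota_\alpha^*(I_V^\lambda)\sphat\ $ (the full hypergeometric sum over $\ell\in\LL_\eff$) to be a point on the twisted Givental cone $\cL_{\XLV_\alpha,(N_\alpha,e_\T^{-1})}$ of the fixed locus $\XLV_\alpha$, which for $\alpha\in F_\sfL$ is the fiber product over $B$ of the bundles $\{\PP(V_j)\}_{j\in\alpha}$ (only the indices in the minimal anticone $\alpha$, not all of $\{1,\dots,N\}$), with twist coming from the normal bundle $N_\alpha=\bigoplus_{i\notin\alpha}(V_i\otimes L_i)$. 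The content of Section \ref{sec:main_result} is that this restriction is literally the twisted $I$-function $(I_V^\lambda)_{\tw}\sphat$ of Theorem \ref{thm:twist_mirror} applied to the toric data $(\LL^\vee,D_\alpha,\omega)$, bundles $\{V_i\}_{i\in\alpha}$ (building the base) and $\{V_i\}_{i\notin\alpha}$ (building the twist). The twisted mirror theorem is therefore invoked for each fixed locus $\XLV_\alpha$, applied to the whole restriction. A truncation to $\ell=0$ would lose the extended Novikov variable $\tq$ and cannot by itself exhibit a point on a Givental cone, so this step as you describe it would fail. You should replace ``isolate the $\ell=0$ term'' with ``identify $\iota_\alpha^*(I_V^\lambda)\sphat\ $ with the twisted $I$-function of the fixed locus'' and adjust your pole/recursion analysis to work with the restrictions.
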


\begin{remark}
The assumption that $V^\vee$ is globally generated guarantees the semi-projectivity of the total space $\XLV$ (Proposition \ref{prop:semi_proj}).
If the fiber toric variety is proper, this assumption is not a restriction on the toric bundle $\XLV$.
It is because for any set of vector bundles $V_1,\dots,V_N$, by tensoring $V_i$'s with suitable line bundles we can make $V_i$'s satisfy the above condition without changing $\XLV$. 
\end{remark}

\begin{remark}
\label{rem:T/K}
Since the $\K$-action on $\XLV$ is trivial, we can also consider the $\T/\K$-equivariant theory for $\XLV$.
In fact, if we replace $u_i$ in $(I_V^\lambda)\sphat\ (z)$ with the $i$-th $\T/\K$-equivariant relative toric divisor for each $i$, $-z(I_V^\lambda)\sphat\ (-z)$ lies in $\cL_{\XLV,\T/\K}$.
To avoid complicated notation, we will not deal with the $\T/\K$-equivariant setting in this paper except in Appendix \ref{app:Fourier}.
\end{remark}

\begin{remark}
\label{rem:Fourier}
The symbol $(I_V^\lambda)\sphat\ $ represents a ($\T$-equivariant version of) discrete Fourier transformation of $I_V^\lambda$ in the sense of \cite{IK:quantum,Iritani:blowups}.
From this point of view, Theorem \ref{thm:intro_mirror} can be considered as stating that the $\T$-equivariant version of \cite[Conjecture 1.7]{Iritani:blowups} holds true for $\XLV$.

We briefly explain a background of a Fourier transform of Givental cones.
Motivated by Teleman's paper \cite{Teleman:gauge}, Iritani \cite{Iritani:blowups} proposes a conjecture that the quantum $D$-module of a symplectic reduction $X/\!/\K$ can be related to the $\K$-equivariant quantum $D$-module of $X$ via a Fourier transform, which intertwines an equivariant parameter with a quantum connection and a shift operator with an additional Novikov variable.
From this conjecture, we can derive a Fourier transform of Givental cones \cite{IK:quantum,Iritani:blowups}.
In Appendix \ref{app:Fourier}, we will introduce its $\T$-equivariant counterpart and check that the $I$-function $(I_V^\lambda)\sphat\ $ coincides with the Fourier transformation of $I_V^\lambda$.
\end{remark}

Thanks to quantum Riemann-Roch theorem \cite{CG:quantum}, our $I$-function $(I_V^\lambda)\sphat\ $ can be described using only "genus-zero Gromov-Witten invariants of $B$" and "the total Chern class $c^\T(V)$".
Therefore, in particular, the following holds.

\begin{corollary}
All genus-zero descendant Gromov-Witten invariants of $\XLV$ can be computed from those of the base $B$ and the total Chern class $c^{\T}(V)$.
\end{corollary}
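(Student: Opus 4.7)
The plan is to combine Theorem \ref{thm:intro_mirror} with two standard ingredients: (i) the Givental cone formalism, by which a sufficiently rich family of points on $\cL_{\XLV,\T}$ determines all genus-zero descendant Gromov--Witten invariants of $\XLV$; and (ii) the quantum Riemann--Roch theorem of Coates--Givental \cite{CG:quantum}, which expresses the $\T$-equivariant Givental cone of the total space $V$ in terms of $\cL_B$ and the $\T$-equivariant Chern classes of $V$.

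First I would note that Theorem \ref{thm:intro_mirror} produces a family of points $-z(I_V^\lambda)\sphat\ (-z)\in\cL_{\XLV,\T}$ parametrized by $t_1,\dots,t_N$ and the Novikov parameter $\tq$. By the standard Birkhoff factorization / mirror map procedure attached to the Givental cone, such a family (whose leading $z$-expansion has the appropriate asymptotic shape) determines the $J$-function of $\XLV$, and hence all genus-zero descendant invariants, by purely algebraic manipulations on $(I_V^\lambda)\sphat\ $ itself. Inspecting the explicit formula in Theorem \ref{thm:intro_mirror}, the only ingredients entering beyond the toric data $(\LL,D,\omega)$ are the Chern roots of the bundles $V_i$ and the cone point $I_V^\lambda$; the hypergeometric factors are manifestly symmetric in the Chern roots of each $V_i$, so they depend only on the $\T$-equivariant Chern classes of the individual $V_i$.

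Second, I would invoke quantum Riemann--Roch to express $I_V^\lambda$ itself in terms of $\cL_B$ and $c^\T(V)$. The $\T$-equivariant Gromov--Witten theory of the total space $V$ (with the diagonal $\T$-action scaling the fibers) is identified, via virtual localization on the zero section, with the $\T$-twisted theory of $B$ twisted by the inverse $\T$-equivariant Euler class of $V$. The Coates--Givental theorem then realizes $\cL_{V,\T}$ as the image of $\cL_B$ under an explicit symplectic transformation whose coefficients are determined by $c^\T(V)$ together with Bernoulli-type factors in $z$. In particular, one can choose $I_V^\lambda$ to depend polynomially on $\lambda$, so that the substitution $\lambda_i\mapsto u_i+D_i(\ell)z$ in the definition of $(I_V^\lambda)\sphat\ $ is legitimate.

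Combining the two steps yields the Corollary: $(I_V^\lambda)\sphat\ $ is built, via universal operations, from genus-zero descendant invariants of $B$ and from $c^\T(V)$, and it in turn determines all genus-zero descendant invariants of $\XLV$. The main step requiring care will be the bookkeeping --- verifying that the substitution $\lambda_i\mapsto u_i+D_i(\ell)z$ preserves the property of being expressible purely through $c^\T(V)$ and descendant invariants of $B$, and that the $J$-function recipe from $(I_V^\lambda)\sphat\ $ introduces no further input. Both points follow routinely from the explicit hypergeometric form of $(I_V^\lambda)\sphat\ $ and from the explicit shape of the quantum Riemann--Roch symplectic transformation.
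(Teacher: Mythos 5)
Your proposal is correct and follows essentially the same route as the paper, which states the corollary immediately after remarking that, by the quantum Riemann--Roch theorem, $(I_V^\lambda)\sphat\ $ can be expressed in terms of genus-zero descendant invariants of $B$ and $c^\T(V)$. Your two steps --- (i) a sufficiently large family of cone points determines all genus-zero descendant invariants via Birkhoff factorization, and (ii) quantum Riemann--Roch together with the identification of $\cL_{V,\T}$ with the $(V,e_\T^{-1})$-twisted cone of $B$ shows $I_V^\lambda$ (hence $(I_V^\lambda)\sphat$ after the substitution $\lambda_i\mapsto u_i+D_i(\ell)z$) is built from descendant invariants of $B$ and $c^\T(V)$ --- spell out exactly the reasoning the paper leaves implicit.
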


Using Theorem \ref{thm:intro_mirror}, we can derive $I$-functions that are already known: Brown's $I$-function for split toric bundles \cite{Brown}, the extended $I$-function \cite{CCIT:mirror} (for toric varieties), and the $I$-function for projective bundles \cite{IK:quantum}.

To prove Theorem \ref{thm:intro_mirror}, we establish a Givental and Brown style characterization of points on $\cL_{\XLV,\T}$, that is, we characterize them in terms of the restrictions to the fixed loci.
We write $F_\sfL$ for the set of $\T$-fixed points on $X_\sfL$.
Since the torus $\T$ acts on $\XLV$ fiberwise, for any $\alpha\in F_\sfL$ we can construct the $\T$-fixed locus $\iota_\alpha\colon\XLV_\alpha\hookrightarrow\XLV$ by gathering the points corresponding to $\alpha$ in each fiber, and hence the $\T$-fixed loci of $\XLV$ can be indexed by $F_\sfL$.
It can be easily seen that $\XLV_\alpha$ is a fiber product of projective bundles over $B$, and $\XLV_\alpha \cong B$ for any $\alpha\in F_\sfL$ if $\XLV$ is a split toric bundle.
We can recover a point $\f$ on the Givental space $\cH_{\XLV,\T}$ from its restrictions $\{\iota_\alpha^*\f\}_{\alpha\in F_\sfL}$ by the (classical) localization formula \cite{AB,BV}.
The following theorem provides an equivalent condition for $\f$ to be a point on $\cL_{\XLV,\T}$ in terms of $\{\iota_\alpha^*\f\}_{\alpha\in F_\sfL}$.

\begin{theorem}[Theorem \ref{thm:characterization}]
\label{thm:intro_characterization}
Let $\f$ be a point on the Givental space $\cH_{\XLV,\T}$ for the total space $\XLV$.
The point $\f(z)$ lies in $\cL_{\XLV,\T}$ if and only if $\{\iota_\alpha^*\f\}_{\alpha\in F_\sfL}$ satisfies the three conditions: 
\begin{itemize}
\item[\textbf{(C1)}] for each $\alpha\in F_\sfL$, the set of poles of $\iota_\alpha^*\f(z)$ as a function in $z$ is contained in a specific subset of $H^*_\T(\pt,\Q)$ determined by the triple $\sfL=(\LL^\vee,D,\omega)$;
\item[\textbf{(C2)}] the principle parts of the functions $\iota_\alpha^*\f(z)$ $(\alpha\in F_\sfL)$ satisfy certain recursion formulas;
\item[\textbf{(C3)}] $\iota_\alpha^*\f(z)$ represents a point on the Givental cone of the fixed locus $\XLV_\alpha$ twisted by the normal bundle and the inverse Euler class.
\end{itemize}
\end{theorem}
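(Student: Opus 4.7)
The plan is to adapt the strategy of Givental, later refined by Brown for split toric bundles, to the non-split setting in which the $\T$-fixed loci $\XLV_\alpha$ are genuine fiber products of projective bundles rather than copies of $B$. Both implications will rest on the $\T$-equivariant virtual localization formula applied to the moduli spaces of stable maps to $\XLV$, together with the combinatorial structure of the fan defining $X_\sfL$.

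For the \textbf{necessity} direction, I would assume $-z\f(-z)\in\cL_{\XLV,\T}$, expand $\f$ in terms of the generating functions of $\T$-equivariant descendant Gromov--Witten invariants, and apply equivariant virtual localization to each invariant. A $\T$-fixed stable map $f\colon C\to\XLV$ decomposes into contracted components landing in some $\XLV_\alpha$ and rational tails mapping isomorphically onto the $\T$-invariant lines joining neighbouring fixed loci. Tracking these two contributions after restricting to $\XLV_\alpha$ yields the three conditions: \textbf{(C1)} the poles of $\iota_\alpha^*\f(z)$ appear only at the $\T$-weights of the invariant lines emanating from $\alpha$, a set combinatorially read off from $\sfL$; \textbf{(C2)} isolating the rational tail carrying the first marked point and summing over its degree gives a recursion expressing each principal part of $\iota_\alpha^*\f$ in terms of values of $\iota_\beta^*\f$ at the neighbouring fixed locus $\beta$; \textbf{(C3)} the residual contribution, where the first marked point sits on a component contracted to $\XLV_\alpha$, is a point on the Givental cone of $\XLV_\alpha$ twisted by the inverse equivariant Euler class of the virtual normal bundle $N_{\XLV_\alpha/\XLV}$.

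For the \textbf{sufficiency} direction, suppose $\f$ satisfies (C1)--(C3). The plan is to show that the subset $\cS\subset\cH_{\XLV,\T}$ of all such $\f$ coincides with $\cL_{\XLV,\T}$, which contains it by necessity, by parametrising both sets by the same tangent data. Condition (C3), together with the Coates--Givental characterisation of twisted cones, identifies the ``regular part'' of each $\iota_\alpha^*\f$ (the portion free of the prescribed poles) with a tangent vector to an explicit twisted Givental cone on $\XLV_\alpha$. Condition (C2) then determines the principal parts iteratively, order by order in the Novikov variables of base and fiber, while (C1) forbids any additional poles. Reassembling $\f$ from $\{\iota_\alpha^*\f\}_{\alpha\in F_\sfL}$ via the classical localisation formula shows that $\cS$ is parametrised by exactly the same tangent data as $\cL_{\XLV,\T}$, forcing $\cS=\cL_{\XLV,\T}$.

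The \textbf{main obstacle} is precisely the non-split structure. Since the $V_i$ are not line bundles, the fixed loci $\XLV_\alpha$ are non-trivial fiber products of projective bundles over $B$, and $N_{\XLV_\alpha/\XLV}$ involves the Chern roots of the various $V_i$. Consequently the twisted cone in (C3) and the recursion coefficients in (C2) must be formulated intrinsically in terms of $c^{\T}(V)$, without the explicit line-bundle decomposition that drives Brown's argument. Verifying that the iterative reconstruction in the sufficiency direction is self-consistent -- that the residue data prescribed by (C2) at the various poles of a single $\iota_\alpha^*\f$ are mutually compatible, and are compatible with the output of (C3) -- requires a delicate equivariant Chern-class calculation on the fibers of $\XLV_\alpha\to B$, and this is the step I expect to demand the most care.
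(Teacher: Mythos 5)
Your proposal follows the paper's strategy closely: necessity via equivariant virtual localization and a decomposition of the $\T$-fixed decorated graphs by the position of the first marking (graphs of type $(\alpha,0)$ contributing zero, type $(\alpha,1)$ giving the recursion (C2), type $(\alpha,2)$ giving the twisted cone condition (C3)), and sufficiency via a degree-by-degree reconstruction of $\f$ from its non-negative $z$-part using (C1)--(C3) together with the already-proved necessity direction. One small difference of emphasis: the paper's heavy computations (virtual normal bundle Euler classes, the recursion coefficients $C_{\alpha,\beta}(k)$) all occur in the necessity direction, whereas the sufficiency direction is a short uniqueness argument that requires no additional ``self-consistency'' check beyond invoking necessity for the point of $\cL_{\XLV,\T}$ sharing the same non-negative part.
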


This is a generalization of Brown's result \cite[Theorem 2]{Brown}, which gives the same characterization for split toric bundles.
There are also similar characterization results for other varieties/stacks; see \cite{CCIT:mirror,JTY,FL}.

Thanks to this characterization theorem, we can prove Theorem \ref{thm:intro_mirror} by checking that the function $-z(I_V^\lambda)\sphat\ (-z)$ satisfies the three conditions.
Note that we can confirm that the function fulfills \textbf{(C1)} and \textbf{(C2)} through a direct calculation.
The verification of \textbf{(C3)} requires another mirror theorem for the twisted Gromov-Witten theory of $\XLV_\alpha$, a fiber product of projective bundles over $B$.
This is a new issue in the non-split case since $\XLV_\alpha\cong B$ if $\XLV$ is a split bundle.

Let $V_1,\dots,V_K,V_{K+1},\dots,V_N$ be vector bundles over a smooth projective variety $B$, and let $E$ be the fiber product of the projective bundles $\PP(V_1),\dots,\PP(V_K)$ over $B$. 
Let $\LL \cong \Z^K$ and let $D_1,\dots,D_N\in\LL^\vee$ be such that $\{D_i\}_{i=1}^K$ forms a basis of $\LL^\vee$.
We set $\cW$ to be the vector bundle $\bigoplus_{i=K+1}^N V_i\otimes\cO_E(D_i)$ over $E$ with $\cO_E(D_i) = \cO_{\PP(V_1)}(a_{i,1})\boxtimes_B\cdots\boxtimes_B\cO_{\PP(V_K)}(a_{i,K})$ where $D_i=\sum_{j=1}^K a_{i,j}D_j$.
Note that $\cW$ is a fiberwise GIT quotient of $V=\bigoplus_{i=1}^NV_i$, and is endowed with the fiberwise $\T$-action induced from that on $V$.
The bundle $\cW$ arises as a normal bundle to the fixed loci of toric bundles.
The following theorem implies that $-z(I_V^\lambda)\sphat\ (-z)$ satisfies \textbf{(C3)}.

\begin{theorem}[Theorem \ref{thm:twist_mirror}]
\label{thm:intro_twist} 
Assume that the dual of the bundle $\bigoplus_{i=1}^NV_i$ is generated by global sections.
Let $I_V^\lambda$ be as in Theorem \ref{thm:intro_mirror}.
Define the $H^*_\T(E)$-valued function $(I_V^\lambda)_{\tw}\sphat$ by
\[
(I_V^\lambda)_{\tw}\sphat(z) = e^{\sum_{i=1}^Nt_iu_i/z} \sum_{\ell\in\LL} \frac{\tq^\ell e^{\sum_{i=1}^ND_i(\ell)\cdot t_i}}{\prod_{i=1}^N\prod_{c=1}^{D_i(\ell)}\prod_{ \substack{\delta\colon\mathrm{Chern\ roots} \\ \mathrm{of\ } V_i} } (u_i + \delta + cz) } \cdot I_V^{u+D(\ell)z}
\]
where $\tq$ denotes the (extended) Novikov variable for the fiber, and $u_i\in H^2_\T(E)$ is defined as 
\[
u_i = 
\begin{cases}
c_1(\cO_{\PP(V_i)}(1))					&1\leq i\leq K,	\\
-\lambda_i+\sum_{j=1}^Ka_{i,j}(u_j+\lambda_j)	&K+1\leq i\leq N.
\end{cases}
\]
Then, $-z(I_V^\lambda)_{\tw}\sphat(-z)$ repressents a point on the $(\cW,e_{\T}^{-1})$-twisted Givental Lagrangian cone of $E$.
\end{theorem}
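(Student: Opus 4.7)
The plan is to prove Theorem \ref{thm:intro_twist} by combining two established techniques: the mirror theorem for non-split projective bundles of \cite{IK:quantum} applied iteratively, and the quantum Riemann--Roch theorem of \cite{CG:quantum} for implementing the twist by $(\cW,e_\T^{-1})$. The key observation is that $E$ is built from $B$ by $K$ iterated projectivizations: setting $E_0=B$ and $E_j=\PP_{E_{j-1}}(\pi_{j-1}^*V_j)$ for $1\leq j\leq K$ with $\pi_{j-1}\colon E_{j-1}\to B$ the structure map, we obtain $E_K=E$.

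First I would translate the hypothesis into twisted form on the base. Since $V\to B$ is a $\T$-equivariant vector bundle whose $\T$-fixed locus is the zero section $B$, equivariant localization identifies $\cL_{V,\T}$ with the $(V,e_\T^{-1})$-twisted Givental cone of $B$, so $I_V^\lambda$ corresponds to a twisted $I$-function on $B$. I would then apply the non-split projective bundle mirror theorem of \cite{IK:quantum} to $\pi_{j-1}^*V_j$ on $E_{j-1}$ for $j=1,\dots,K$ in succession, each iteration working inside the already-twisted theory of the preceding step. At the $j$-th stage this produces a point on the Givental cone of $E_j$ twisted by the pullback of the remaining summands (with the inverse Euler class), accompanied by the hypergeometric factor $\prod_{c=1}^{D_j(\ell)}\prod_\delta(u_j+\delta+cz)^{-1}$ with $u_j=c_1(\cO_{\PP(V_j)}(1))$, together with the substitution $\lambda_j\mapsto u_j+D_j(\ell)z$ inside the fiber $I$-function.

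Next I would incorporate the remaining twists by $\cO_E(D_i)=\boxtimes_{j=1}^K\cO_{\PP(V_j)}(a_{i,j})$ for $i>K$. The quantum Riemann--Roch theorem of \cite{CG:quantum} prescribes that replacing $\pi^*V_i$ by $V_i\otimes\cO_E(D_i)$ in the twisting datum amounts, in each Novikov degree $\ell\in\LL$, to inserting the hypergeometric factor $\prod_{c=1}^{D_i(\ell)}\prod_\delta(u_i+\delta+cz)^{-1}$ with $u_i=-\lambda_i+\sum_{j=1}^Ka_{i,j}(u_j+\lambda_j)$, the shifted Chern root that accounts for both the $\T$-weight and the divisor twist, accompanied by the substitution $\lambda_i\mapsto u_i+D_i(\ell)z$ inside $I_V^\lambda$. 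Putting this together with the previous step yields, after $K$ projectivizations and the final twist, a candidate point on the $(\cW,e_\T^{-1})$-twisted cone of $E$.

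The main obstacle is the bookkeeping that checks that the two compositions, once executed, produce exactly the closed formula $(I_V^\lambda)_{\tw}\sphat$ of the statement. Concretely, one must verify that the exponential prefactor $e^{\sum_it_iu_i/z}\sum_\ell\tq^\ell e^{\sum_iD_i(\ell)t_i}$ assembles correctly from the base-point translations and extended Novikov variables at each stage; that the full set of $N$ substitutions $\lambda_i\mapsto u_i+D_i(\ell)z$ is realized uniformly inside the argument of $I_V^{u+D(\ell)z}$; and that the iterated procedure is independent of the order in which the $K$ projectivizations and $N-K$ twists are performed, so that the resulting formula depends only on the data $(\sfL,V)$. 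Once this matching is verified, the combined validity of the non-split projective bundle mirror theorem and of quantum Riemann--Roch yields Theorem \ref{thm:intro_twist}.
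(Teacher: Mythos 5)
Your route is genuinely different from the paper's. The paper avoids iteration entirely: it uses the global-generation hypothesis to write short exact sequences $0\to V_i\to\cO^{\oplus s_i}\to\cQ_i\to 0$ for $1\leq i\leq K$, so that $E=\X_\sfL(\vecV)$ sits as the zero locus of a regular section of a convex bundle over the \emph{split} toric bundle $\X_\sfL(\vecO)=B\times\prod_{i=1}^K\PP^{s_i-1}$. It then invokes Brown's mirror theorem for $\X_\sfL(\vecO)$, multiplies by the appropriate quantum Riemann--Roch operators $\Delta_{\cQ_i}^{\mu_i}$, $\Delta_{W_i}^{\mu_i}$ (the computation hinges on the closed-form identity in Lemma~\ref{lem:H}(2), which produces exactly the hypergeometric factors), and finally passes from the ambient split bundle to $E$ via Theorem~\ref{thm:subvariety} (Kim--Kresch--Pantev functoriality). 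All of the bookkeeping you flag as the ``main obstacle'' is absorbed into a single explicit calculation of $I^\mu$ rather than into $K$ nested applications.

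The chief gap in your plan is the iterative invocation of \cite[Theorem 3.3]{IK:quantum}. That theorem takes as \emph{input} a point on the (untwisted) $\T$-equivariant cone of a vector bundle over the base, and \emph{outputs} a point on a twisted cone of a projective bundle. After the first step your data sit on a twisted cone of $E_1=\PP(V_1)$; to feed this into the same theorem over the base $E_1$ you must first re-interpret it as a point on the cone of a $\T'$-vector bundle over $E_1$ (tracking which subtorus of $\T$ still acts nontrivially), and the summands of that bundle are the \emph{shifted} bundles $V_i\otimes\cO_{E_1}(D_i)$, not the $\pi^*V_i$ you name. Projectivizing then gives $\PP(V_2\otimes\cO(D_2))$, whose tautological $\cO(1)$ differs from $c_1(\cO_{\PP(V_2)}(1))$ by a twist, so the $u_i$'s accumulate corrections at every stage. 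None of this is handled by the statement of \cite[Theorem 3.3]{IK:quantum}, which presupposes an untwisted base; you would need to prove a twisted-base version of it (or reduce to the untwisted one by a further QRR argument at every step), verify the accumulated $u_i$-shifts match the closed formula, and check order-independence of the $K$ projectivizations and $N-K$ twists. These are real proofs, not bookkeeping, and without them the argument is incomplete. The paper's one-shot embedding into a split toric bundle sidesteps all of this.
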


\begin{remark}
\label{rem:bundle}
In Theorem \ref{thm:twist_mirror} (and Section \ref{sec:twist_mirror}), we will use different notation from here.
There, we write $W_i$ for $V_i$ ($K+1\leq i\leq N$) here in order to distinguish between the vector bundles $V_1,\dots,V_K$ giving a variety $E$ and those $V_{K+1},\dots,V_N$ providing the twist data $(\cW,e_\T^{-1})$.
Additionally, we will denote the torus acting on $V$ as $\T'$ to distinguish it from the torus $\T$ acting on $\XLV$; see Remark \ref{rem:T'}.
\end{remark}

This result is a straightforward generalization of the mirror theorem for non-split projective bundles \cite[Theorem 3.3]{IK:quantum}.
The key ingredient of the proof is the quantum Riemann-Roch theorem \cite[Corollary 4]{CG:quantum} and the well-known fact \cite{KKP:functoriality} that Gromov-Witten invariants of the zero locus of a regular section of a convex vector bundle over a variety $X$ are given by twisted Gromov-Witten invariants of $X$.

The plan of the paper is as follows.
In Section \ref{sec:GW}, we recall the definition of Gromov-Witten invariants, and introduce the non-equivariant/equivariant/twisted Givental cones and quantum Riemann-Roch theorem. 
In Section \ref{sec:toric_bundle}, we introduce the notion of split/non-split toric bundles, and summarize the structure of cohomology and the semigroups generated by effective curve classes, which will be needed in the subsequent sections.
In Section \ref{sec:characterization}, we establish a characterization theorem (Theorem \ref{thm:characterization}) for points on the Lagrangian cone of a toric bundle.
In Section \ref{sec:twist_mirror}, we prove a mirror theorem for twisted Gromov-Witten theory of a fiber product of projective bundles over $B$.
In Section \ref{sec:main_result}, we prove the main result (Theorem \ref{thm:mirror_thm}) of this paper, that is, a mirror theorem for (possibly non-split) toric bundles.

\phantom{A}

\noindent
\textbf{Acknowledgements.}
The author is deeply grateful to Hiroshi Iritani for his guidance and enthusiastic support during the writing of this paper.
He also would like to thank Yuan-Pin Lee and Fumihiko Sanda for very helpful discussions.
This work was supported by JSPS KAKENHI Grant Number 22KJ1717.

\section{Genus-zero Gromov-Witten theory}
\label{sec:GW}
In this section, we briefly recall the (torus-equivariant/twisted) genus-zero Gromov-Witten theory. 
We will introduce Gromov-Witten invariants, Givental Lagrangian cones and the quantum Riemann-Roch theorem.

\subsection{Gromov-Witten invariant and its variants}
We recall the definition of Gromov-Witten invariant.
We also introduce a torus-equivariant version and a twisted version of it.

\subsubsection{Gromov-Witten invariants}
Let $X$ be a smooth projective variety over $\C$.
For $d\in H_2(X,\Z)$ and a non-negative integer $n$, let $X_{0,n,d}$ be the moduli space of degree-$d$ stable maps to $X$ from genus-zero curves with $n$ marked points. 
It is endowed with the evaluation maps $\ev_i\colon X_{0,n,d}\to X$ $(1 \leq i \leq n)$.
For $\alpha_1,\dots,\alpha_n\in H^*(X)$ and $k_1,\dots,k_n\in\Z_{\geq0}$, we define \emph{genus-zero descendant Gromov-Witten invariants} by
\[
\corr{\alpha_1\psi^{k_1},\dots,\alpha_n\psi^{k_n}}^X_{0,n,d} := \int_{[X_{0,n,d}]^{\vir}}\prod^n_{i=1}\ev^*_i(\alpha_i)\psi_i^{k_i},
\]
where $[X_{0,n,d}]^{\vir}\in H_*(X_{0,n,d},\Q)$ is the virtual fundamental class \cite{Behrend} and $\psi_i$ is the $\psi$-class, which is the first Chern class of the $i$-th universal cotangent line bundle over $X_{0,n,d}$.

\subsubsection{Virtual localization}
\label{subsubsec:vir_loc}
Let $\T$ be an algebraic torus and $X$ be a smooth projective variety endowed with a $\T$-action.
Let $F_1,\dots,F_m$ be the connected components of $X_{0,n,d}^\T$, and let $\iota_{F_i}$ be the embedding $F_i\hookrightarrow X_{0,n,d}$ for $1\leq i\leq m$.
For $\xi\in F$, let $T^1$ be a tangent space and $T^2$ be an obstruction space at $\xi$.
These spaces are naturally equipped with a $\T$-action, and the action induces the decomposition of them into the fixed parts $T^{j,\fix}$ and the moving parts $T^{j,\mov}$.
By collecting the space $T^{j,\mov}$ we have the bundle $\cT^{j,\mov}_{F_i}$ over $F_i$.
We define the \emph{virtual normal bundle of $F_i$} as $N_{F_i}^{\vir} = \cT^{1,\mov}_{F_i} \ominus \cT^{2,\mov}_{F_i}$.

\begin{theorem}[\cite{GP}]
\label{thm:vir_loc}
In the above setting, we have
\[
[X_{0,n,d}]^{\vir} = \sum_{i=1}^m \iota_{F_i*} \left( \frac{[F_i]^{\vir}}{e_\T(N^{\vir}_{F_i})} \right).
\]
In particular, for any $\phi\in H^*_\T(X_{0,n,d},\C)$, we have
\[
\int^\T_{[X_{0,n,d}]^{\vir}}\phi = \sum_{i=1}^m \int^\T_{[F_i]^{\vir}} \frac{\iota_{F_i}^*\phi}{e_\T(N_{F_i}^{\vir})}.
\]
\end{theorem}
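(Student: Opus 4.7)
The plan is to deduce the formula from the Atiyah--Bott/Berline--Vergne equivariant localization theorem, applied after one has shown that the $\T$-equivariant virtual fundamental class $[X_{0,n,d}]^{\vir}$ decomposes along the $\T$-fixed locus in a controlled manner. The key ingredient will be the Behrend--Fantechi construction of $[X_{0,n,d}]^{\vir}$ via the intrinsic normal cone $\mathfrak{C}_{X_{0,n,d}}$ and a $\T$-equivariant perfect obstruction theory $\phi\colon E^\bullet\to L^\bullet_{X_{0,n,d}}$, where one may take $E^\bullet=(R\pi_*f^*T_X)^\vee$ with $\pi$ the universal curve and $f$ the universal map; the $\T$-action on $X$ induces a $\T$-action on the stable map stack and on this complex.

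First I would show that each connected component $F_i\subset X_{0,n,d}^{\T}$ inherits a $\T$-equivariant perfect obstruction theory $\phi^{\fix}_i\colon (E^\bullet|_{F_i})^{\fix}\to L^\bullet_{F_i}$ by taking the $\T$-fixed part of the restricted complex $E^\bullet|_{F_i}$, and that the moving part $(E^\bullet|_{F_i})^{\mov}$ yields precisely a two-term complex whose K-theory class equals $\mathcal{T}^{1,\mov}_{F_i}\ominus\mathcal{T}^{2,\mov}_{F_i}$; this is the virtual normal bundle $N_{F_i}^{\vir}$. The key technical point is that on the fixed locus any equivariant two-term complex decomposes canonically into fixed and moving parts, and the fixed part maps to $L^\bullet_{F_i}$ while the moving part gives the normal direction. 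Using this decomposition, $[F_i]^{\vir}$ is defined via $\phi^{\fix}_i$.

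Second I would compare the intrinsic normal cones. The restriction $\mathfrak{C}_{X_{0,n,d}}|_{F_i}$ sits inside $h^1/h^0(E^\bullet|_{F_i})$, and the fixed part of this cone is exactly $\mathfrak{C}_{F_i}$ sitting inside $h^1/h^0((E^\bullet|_{F_i})^{\fix})$. This identification, combined with functoriality of the normal cone construction, provides the geometric input needed to compare the virtual classes.

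Third, having set up these equivariant objects, I would apply equivariant Atiyah--Bott localization in the following form: for any $\T$-equivariant class $\phi$ on $X_{0,n,d}$, one has
\[
\phi=\sum_{i=1}^m \iota_{F_i*}\!\left(\frac{\iota_{F_i}^*\phi}{e_\T(N_{F_i})}\right)
\]
after inverting finitely many characters, where $N_{F_i}$ is the normal bundle in a chosen smooth ambient $\T$-embedding of $X_{0,n,d}$. Applying this to $\phi=[X_{0,n,d}]^{\vir}$ and using the compatibility between the intrinsic normal cone decomposition and the fixed/moving splitting of $E^\bullet|_{F_i}$, the ambient normal bundle contribution combines with the moving part of the obstruction theory to produce $e_\T(N_{F_i}^{\vir})$ in the denominator, while the numerator becomes $[F_i]^{\vir}$ defined from $(E^\bullet|_{F_i})^{\fix}$. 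The integration formula then follows by pushing forward to a point.

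The main obstacle I anticipate is the third step: proving rigorously that the ambient normal bundle contribution cancels against the obstruction-theoretic data so as to leave exactly $e_\T(N_{F_i}^{\vir})$. This is the technical heart of Graber--Pandharipande's argument and requires a careful analysis of cone stacks in the equivariant setting, specifically verifying that the equivariant embedding $\mathfrak{C}_{X_{0,n,d}}|_{F_i}\hookrightarrow E_1|_{F_i}$ respects the fixed/moving decomposition compatibly with the Gysin pullback used to define the virtual class. Once this cone-level compatibility is in place, the remainder of the argument is a formal manipulation in localized $\T$-equivariant cohomology.
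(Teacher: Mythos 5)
The paper states Theorem \ref{thm:vir_loc} as a citation to Graber--Pandharipande \cite{GP} and gives no proof of its own, so there is no internal argument to compare against; what you have written is a reconstruction of the \cite{GP} strategy, and your high-level outline is indeed faithful to it: decompose the equivariant perfect obstruction theory over each fixed component into fixed and moving parts, use the fixed part to build $[F_i]^{\vir}$ and the moving part to define $N_{F_i}^{\vir}$, compare intrinsic normal cones, and reduce to classical equivariant localization on a smooth ambient stack.

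The one place where your sketch is imprecise enough that, taken literally, it would not compile into a proof is the third step. You write a localization identity for an arbitrary equivariant class $\phi$ on $X_{0,n,d}$ and then propose to apply it to $\phi=[X_{0,n,d}]^{\vir}$. But $[X_{0,n,d}]^{\vir}$ is a (Borel--Moore/Chow) homology class, not a cohomology class, so $\iota_{F_i}^*[X_{0,n,d}]^{\vir}$ is not defined, and on the singular stack $X_{0,n,d}$ there is no smooth normal bundle $N_{F_i}$ to $F_i$ with which to form the stated denominator. What \cite{GP} actually do is choose a $\T$-equivariant closed embedding $X_{0,n,d}\hookrightarrow Y$ into a smooth Deligne--Mumford stack, embed the (equivariant) intrinsic normal cone into a $\T$-equivariant vector bundle on $Y$, and then apply Atiyah--Bott/Edidin--Graham localization \emph{on $Y$} to the Gysin map that produces the virtual class, rather than to the virtual class itself. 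The nontrivial content is then exactly the cone-level compatibility you flag: the fixed part of the cone sits in the fixed part of the bundle, and the excess from the moving part of the obstruction theory combines with the ambient normal bundle of $F_i\subset Y^{\T}$ so that the only surviving factor after cancellation is $e_\T(N_{F_i}^{\vir})^{-1}$. You correctly identify this as the technical heart, but the formula you write as the starting point of step three is not the correct one to start from; the argument must be run at the level of Gysin pullbacks on the ambient, not on classes living on $X_{0,n,d}$.

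One further hypothesis you should make explicit: \cite{GP} require a global equivariant embedding of $X_{0,n,d}$ into a smooth DM stack and a global equivariant resolution of the obstruction theory by vector bundles; in the present setting this is available because the targets are (quasi-)projective with a linearizable torus action. Without such hypotheses the localization argument does not go through, so they are not merely technical conveniences.
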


\subsubsection{Equivariant Gromov-Witten invariants}
We discuss Gromov-Witten invariants in the equivariant setting.
Let $X$ be a smooth variety, and assume that $X$ is semi-projective, that is, $X$ is projective over an affine variety.
Let $\T$ be an algebraic torus and consider a $\T$-action on $X$ whose fixed point set is projective.
This action naturally induces an $\T$-action on the moduli space $X_{0,n,d}$.

For $\alpha_1,\dots,\alpha_n\in H^*_\T(X)$ and $k_1,\dots,k_n\in\Z_{\geq0}$, we define \emph{$\T$-equivariant genus-zero descendant Gromov-Witten invariants} by
\[
\corr{\alpha_1\psi^{k_1},\dots,\alpha_n\psi^{k_n}}^{X,\T}_{0.n,d} := \int^\T_{[X_{0,n,d}]^{\vir}}\prod^n_{i=1}\ev^*_i(\alpha_i)\psi_i^{k_i}.
\]
Here we define the right-hand side via the virtual localization formula (Corollary \ref{thm:vir_loc}), and hence it belongs to the fraction field $\Frac(H^*_\T(\pt))$.
When $X$ is projective, it can be computed without the localization formula and belongs to $H^*_\T(\pt)$.

\subsubsection{Twisted Gromov-Witten invariants}
\label{subsubsec:twisted_inv}
Let $\T$ be an algebraic torus.
For any $\chi\in H^2_\T(\pt,\Z)\setminus0$, we introduce the following four characteristic classes $e_\chi,e_\chi^{-1},\te_\chi,\te_\chi^{-1}$:  
\begin{align*}
e_\chi(V)
&= \sum_{i=0}^{\rank(V)} \chi^{\rank(V)-i} c_i(V),	\\
e_\chi^{-1}(V)
&= (e_\chi(V))^{-1},	\\
\te_\chi(V)
&= \chi^{-\rank(V)} e_\chi(V) = \sum_{i=0}^{\rank(V)} \chi^{-i} c_i(V),	\\
\te_\chi^{-1}(V)
&= (\te_\chi(V))^{-1}	
\end{align*}
where $V$ is a vector bundle over any topological space.
We note that, if $\T$ acts on $V$ fiberwise via the character $\chi\colon\T\to\C^\times$, the class $e_\chi(V)$ coincides with the $\T$-equivariant Euler class $e_\T(V)$.

Let $X$ be a smooth projective variety which is endowed with a trivial $\T$-action.
Let $W_i$, $i=1,\dots,N$, be vector bundles over $X$ and let  $\bc^i$, $i=1,\dots,N$, be one of the characteristic classes $e_{\chi_i}, e_{\chi_i}^{-1}, \te_{\chi_i}, \te_{\chi_i}^{-1}$ associated with a character $\chi_i\colon \T \to \C^\times$. 
The collection
\[
(W_1,\bc^1),\dots,(W_N,\bc^N)
\]
is referred to as \emph{twist data}.
We sometimes use the vector symbol $(\vecW,\vecbc)$ as an abbreviation for the twist data above.

Let $\pi\colon X_{0,n+1,d}\to X_{0,n,d}$ be the map forgetting the $(n+1)$-st point and $\ev_{n+1}\colon X_{0,n+1,d}\to X$ be the $(n+1)$-st evaluation map:
\[
\xymatrix{
X_{0,n+1,d} \ar[r]^{\ev_{n+1}} \ar[d]_\pi & X \\
X_{0,n,d}
}
\]
Note that these maps give the universal family of stable maps.
For a vector bundle $W$ over $X$, we define
\[
W_{0,n,d} := \R\pi_*\ev_{n+1}^* W \in K^0_\T(X_{0,n,d})
\]
where $\R\pi_*$ denotes the $K$-theoretic pushforward.
For $\alpha_1,\dots,\alpha_n\in H^*_{\T}(X)$ and $k_1,\dots,k_n\in\Z_{\geq0}$, we define \emph{genus-zero descendant Gromov-Witten invariants twisted by $(W_1,\bc^1),\dots,(W_N,\bc^N)$} \cite{CG:quantum} by
\[
\corr{\alpha_1\psi^{k_1},\dots,\alpha_n\psi^{k_n}}^{X,(\vec{W},\vec{\bc})}_{0.n,d} := \int^\T_{[\cX_{0,n,d}]^{\vir}}\left(\prod^n_{i=1}\ev^*_i(\alpha_i)\psi_i^{k_i}\right) \cdot \left(\prod^N_{i=1} \bc^i((W_i)_{0,n,d})\right),
\]
which take values in $\Frac(H^*_\T(\pt))$.
We will discuss in Subsection \ref{subsec:QRR} the relationship between the twisted theory and the untwisted theory in terms of their Lagrangian cones, which are introduced in the next subsection.

Finally, we introduce the notation $(W,e_\T^{\pm1})$.
This is useful when we treat the twist data coming from a $\T$-vector bundle $W$. 

\begin{definition}
Let $W$ be a vector bundle over $X$ with a fiberwise $\T$-action such that there are no $\T$-fixed non-zero vectors.
We write $W = \bigoplus_{i=1}^N W_i$ for the weight decompostion, and let $\chi_i\in H^2_\T(\pt)\setminus0$ be the character associated with $\T$-action on $W_i$.
We define $(W,e_\T^{\pm1})$ to be the twist data $(W_1,e_{\chi_1}^{\pm1}),\dots,(W_N,e_{\chi_N}^{\pm1})$.
\end{definition}

\subsection{Givental Lagrangian cone}
\label{subsec:Givental_cone}
Following \cite{Givental:symplectic} \cite{CG:quantum}, we introduce a symplecto-geometric formulation of genus-zero Gromov-Witten theory.
In this formulation, many properties of Gromov-Witten invariants are translated into geometric properties of a certain Lagrangian cone in an infinite dimensional symplectic vector space.
We will introduce three variations of Lagrangian cones.

\subsubsection{Non-equivariant case}
\label{subsubsec:Lagrangian_cone1}
Let $X$ be a smooth projective variety.
Let $(\cH_X,\Omega)$ be a symplectic vector space:
\begin{align*}
\cH_X :=& H^*(X)[z,z^{-1}][\![\Eff(X)]\!],	\\
\Omega(f,g) :=& -\Res_{z=\infty} \left(\int_X f(-z)\cup g(z)\right) dz \qquad \text{for } f,g\in\cH_X
\end{align*}
where $\Eff(X)\subset H_2(X,\R)$\footnote{$\Eff(X)$ contains the image of the semigroup in $H_2(X,\Z)$ generated by the effective curve classes under the map $H_2(X,\Z)\to H_2(X,\R)$. In general, $\Eff(X)$ does not equal the image.} denotes the intersection of the cone generated by effective curve classes and the image of $H_2(X,\Z)\to H_2(X,\R)$, and $\C[\![\Eff(X)]\!]$ is the completion of $\C[\Eff(X)]$ with respect to the additive valuation $v$ induced by the semigroup homomorphism $\omega\colon\Eff(X)\to\Z$ given by a K\"{a}hler form $\omega\in H^2(X,\Z)$:
\begin{align*}
\omega\colon \Eff(X)\to\Z, \qquad d\mapsto\omega(d):=\int_d\omega,	\\
v\left(\sum_{d\in\Eff(X)}c_dQ^d\right) = \min_{d\colon c_d\neq0} \omega(d)
\end{align*}
where $Q$ denotes a formal variable for the group ring $\C[\Eff(X)]$ called the \emph{Novikov variable}.
There is a standard polarization $\cH_X=\cH_+\oplus\cH_-$, where
\begin{align*}
\cH_+ :=& H^*(X)[z][\![\Eff(X)]\!], \\
\cH_- :=& z^{-1}H^*(X)[z^{-1}][\![\Eff(X)]\!].
\end{align*}
These are $\Omega$-isotropic subspaces and $\cH$ can be identified with the cotangent bundle $T^*\cH_+$.

Let $\bt(z)=\sum_{i\geq0}t_iz^i\in H^*(X)[z]$.
We define the \emph{genus-zero descendant potential} $\cF^0_X$ as
\[
\cF^0_X(\bt) := \sum_{ \substack{n\geq0,d\in\Eff(X) \\ (n,d)\neq(0,0),(1,0),(2,0)} } \frac{Q^d}{n!} \corr{\bt(\psi),\dots,\bt(\psi)}_{0,n,d}^X.
\]
Under a shift $\bq(z)=\bt(z)-\bone z$, which is called the \emph{dilaton shift}, we consider $\cF^0_X$ as a formal function on $\cH_+$ in a formal neighborhood of $-\bone z\in\cH_+$.
The Givental Lagrangian cone $\cL_X$ \cite{Givental:symplectic} is a formal germ of a Lagrangian submanifold of $\cH_X$ defined as the graph of the differential of $\cF^0_X$.

We can give an explicit description for points on $\cL_X$.
Let $\{\phi_i\}_{i\in I}$ be a $\C$-basis of $H^*(X)$, $\{\phi^i\}_{i\in I}$ be the dual basis with respect to the Poincar\'{e} pairing.
We let $S$ be a semigroup such that there exists a semigroup homomorphism $i\colon\Eff(X)\hookrightarrow S$ and a valuation $v_S\colon \C[S]\to\Z_{\geq0}$ which extends the valuation $v$ on $\C[\Eff(X)]$ via the inclusion $i$.
In this situation, for any ring $R$ we can define the completion $R[\![S]\!]$ of $R[S]$ by using the valuation $v_S$.
A $\C[\![S]\!]$-valued point on $\cL_X$ is of the form
\[
-\bone z + \bt(z) + \sum_{ \substack{n\geq0,d\in\Eff(X) \\ (n,d)\neq(0,0),(1,0)} } \sum_{i\in I} \frac{Q^d}{n!} \corr{\frac{\phi_i}{-z-\psi},\bt(\psi),\dots,\bt(\psi)}_{0,n+1,d}^X\phi^i
\]
with $\bt(z)\in\cH_+\otimes_{\C[z][\![\Eff(X)]\!]}\C[z][\![S]\!]$ satisfying $\bt(z)|_{Q=0}=0$.
Here we expand $1/(-z-\psi)$ at $z=\infty$:
\[
\frac{1}{-z-\psi} = \sum_{k\geq0} (-z)^{-k-1} \psi^k.
\] 
Since $\psi_1$ is nilpotent, this form indeed belongs to $\cH_X\otimes_{\C[z][\![\Eff(X)]\!]}\C[z][\![S]\!]$.

\begin{example}
We give a typical example of $S$.
Set $S = \Eff(X) \oplus (\Z_{\geq0})^\N$ and write $\{e_n\}_{n\in\N}$ for the standard basis of $(\Z_{\geq0})^\N$.
Define $\omega_S\colon S\to\Z_{\geq0}$ as a semigroup homomorphism satisfying that $\omega_S|_{\Eff(X)}$ coincides with the homomorphism $\omega\colon\Eff(X)\to\Z_{\geq0}$ and $\omega_S(e_i)=i$.
In this case, we have
\[
\C[\![S]\!] = \C[\![\Eff(X)]\!][\![t_1,t_2,\cdots]\!]
\]
where $t_i$ denotes a formal variable for $\C[S]$ associated with $e_i\in S$.
\end{example}

\begin{remark}
The Lagrangian cone $\cL_X$ (and its variants introduced in the following subsections) can be formulated as a formal scheme \cite{CCIT:computing}.
However, in this thesis, it is sufficient to consider a set of valued points on $\cL_X$.
\end{remark}

\subsubsection{$\T$-equivariant case}
\label{subsubsec:equiv_cone}
We let $\T$ be an algebraic torus, and let $X$ be a smooth semi-projective variety endowed with a $\T$-action whose fixed point set is projective.
We will define the Lagrangian cone in a similar way. 
We let $\lambda_1,\dots,\lambda_N$ denote the equivariant parameters and write $\C[\lambda]$ and $\C(\lambda)$ for $H^*_\T(\pt)$ and $\Frac(H^*_\T(\pt))$ respectively.
We set
\begin{align*}
\cH_{X,\T}		:=&\ H^*_{\T}(X)_{\loc}(\!(z^{-1})\!)[\![\Eff(X)]\!],	\\
\cH_+			:=&\ H^*_{\T}(X)_{\loc}[z][\![\Eff(X)]\!],	\\
\cH_-			:=&\ z^{-1}H^*_{\T}(X)_{\loc}[\![z^{-1}]\!][\![\Eff(X)]\!],	\\
\Omega(f,g)		:=&\ -\Res_{z=\infty} \left(\int_X^\T f(-z)\cup g(z)\right) dz \qquad \text{for } f,g\in\cH_{X,\T}
\end{align*}
where $H^*_{\T}(X)_{\loc}:=H^*_{\T}(X)\otimes_{\C[\lambda]}\C(\lambda)$.
There is a standard polarization $\cH_{X,\T}=\cH_+\oplus\cH_-$, and we identify $\cH_{X,\T}$ with $T^*\cH_+$.

Let $\bt(z)=\sum_{i\geq0}t_iz^i\in H^*_{\T}(X)[z]$.
We define \emph{the $\T$-equivariant genus-zero descendant potential} $\cF^0_{\cX,\T}$ as
\[
\cF^0_{X,\T}(-\bone z + \bt(z)) := \sum_{ \substack{n\geq0,d\in\Eff(X) \\ (n,d)\neq(0,0),(1,0),(2,0)} } \frac{Q^d}{n!} \corr{\bt(\psi),\dots,\bt(\psi)}_{0,n,d}^{X,\T},
\]
which is defined over a formal neighborhood of $-\bone z\in\cH_+$.
The equivariant Givental Lagrangian cone $\cL_{X,\T}$ is defined as the graph of the differential of $\cF^0_{X,\T}$.
Let $\{\phi_i\}_{i\in I}$ be a basis of $H^*_{\T}(X)_{\loc}$ over $\Frac(H^*_\T(\pt))$, $\{\phi^i\}$ be the dual basis with respect to the Poincar\'{e} pairing, and let $S$ and $\C[\![S]\!]$ be as in the previous subsection.
A $\C(\lambda)[\![S]\!]$-valued point on $\cL_X$ is of the form
\begin{equation}
\label{eqn:equiv_cone}
-\bone z + \bt(z) + \sum_{ \substack{n\geq0,d\in\Eff(X) \\ (n,d)\neq(0,0),(1,0)} } \sum_{i\in I} \frac{Q^d}{n!} \corr{\frac{\phi_i}{-z-\psi},\bt(\psi),\dots,\bt(\psi)}_{0,n+1,d}^{X,\T}\phi^i
\end{equation}
with $\bt(z)\in\cH_+\otimes_{\C(\lambda)[z][\![\Eff(X)]\!]}\C(\lambda)[z][\![S]\!]$ satisfying $\bt(z)|_{Q=0}=0$.
We note that since $\psi_1$ is not nilpotent in general, this form may not be an element of $H^*_\T(X)_{\loc}[z,z^{-1}][\![S]\!]$. 
The semi-projectivity of $X$ implies that $\ev_1$ is proper, and it holds that
\[
\sum_{i\in I} \corr{\frac{\phi_i}{-z-\psi},\bt(\psi),\dots,\bt(\psi)}_{0,n+1,d}^{X,\T}\phi^i = {\ev_1}_* \left[ \frac{\prod_{i=2}^{n+1}\ev_i^*\bt(\psi_i)}{-z-\psi_1} \cap \left[ X_{0,n+1,d} \right]^{\vir} \right].
\]
From this equation we can see that the form \eqref{eqn:equiv_cone} lies on $H^*_\T(X)(\!(z^{-1})\!)[\![S]\!]$ if $\bt(z)\in H^*_\T(X)[z][\![S]\!]$. 

\begin{remark}
By interpreting $z$ as an equivariant parameter of $\C^\times$ and considering $\T\times\C^\times$-equivariant Gromov-Witten theory of $X$ (where the second factor $\C^\times$ acts trivially on $X$), we can interprete the form \eqref{eqn:equiv_cone} as an element of $H^*_{\T\times\C^\times}(X)_{\loc}[\![S]\!]$.
If we take the Laurent expansion at $z=\infty$, we can recover \eqref{eqn:equiv_cone}.
In Section \ref{sec:characterization}, we will consider the Laurent expansion of the form at $z=0$.
\end{remark}

\subsubsection{Twisted case}
We take $X$, $\T$ and the data $(\vecW,\vecbc)$ as in Subsection \ref{subsubsec:twisted_inv}, and consider the $(\vecW,\vecbc)$-twisted theory.
The construction is almost the same as the previous ones.
We set
\begin{align*}
\cH_{X,(\vecW,\vecbc)}					:=&\ H^*_\T(X)_{\loc} (\!(z)\!) [\![\Eff(X)]\!],	\\
\cH_+								:=&\ H^*_\T(X)_{\loc} [\![z]\!] [\![\Eff(X)]\!],	\\
\cH_-								:=&\ z^{-1}H^*_\T(X)_{\loc} [z^{-1}] [\![\Eff(X)]\!],	\\
\Omega(f,g)							:=&\ -\Res_{z=\infty} \left(\int_X^\T f(-z)\cup g(z)\cup \prod_{i=1}^N \bc^i(W_i)\right) dz,	\\
\cF^0_{X,(\vecW,\vecbc)}(-\bone z + \bt(z))	:=&\ \sum_{ \substack{n\geq0,d\in\Eff(X) \\ (n,d)\neq(0,0),(1,0),(2,0)} } \frac{Q^d}{n!} \corr{\bt(\psi),\dots,\bt(\psi)}_{0,n,d}^{X,(\vecW,\vecbc)}.
\end{align*}
Then there is a canonical polarization $\cH_{X,(\vecW,\vecbc)} = \cH_+\oplus\cH_-$, and we can obtain the Lagrangian cone $\cL_{X,(\vecW,\vecbc)}$ whose $\C(\lambda)[\![S]\!]$-valued point is of the form
\[
-\bone z + \bt(z) + \sum_{ \substack{n\geq0,d\in\Eff(X) \\ (n,d)\neq(0,0),(1,0)} } \sum_{i\in I} \frac{Q^d}{n!} \corr{\frac{\phi_i}{-z-\psi},\bt(\psi),\dots,\bt(\psi)}_{0,n+1,d}^{X,(\vecW,\vecbc)}\cdot \frac{\phi^i}{\prod_{i=1}^N \bc^i(W_i)}
\]
with $\bt(z)\in\cH_+\otimes_{\C(\lambda)[\![z]\!][\![\Eff(X)]\!]}\C(\lambda)[\![z]\!][\![S]\!]$ satisfying $\bt(z)|_{Q=0}=0$.
Here we use the notations in the previous subsection.
Since $\T$ acts trivially on $X$, $\psi_1$ is nilpotent and hence this function belongs to $\cH_{X,(\vecW,\vecbc)}\otimes_{\C(\lambda)(\!(z)\!)[\![\Eff(X)]\!]}\C(\lambda)(\!(z)\!)[\![S]\!]$.
As in the untwisted case, the above function equals
\begin{multline}
\label{eqn:twist_push}
-\bone z + \bt(z) + \sum_{ \substack{n\geq0,d\in\Eff(X) \\ (n,d)\neq(0,0),(1,0)} } \frac{Q^d}{n!} \cdot \prod_{i=1}^N\bc^i(W_i)^{-1} \\
\cdot {\ev_1}_* \left[ \frac{\prod_{i=2}^{n+1}\ev_i^*\bt(\psi_i)}{-z-\psi_1} \cdot \prod_{i=1}^N \bc^i((W_i)_{0,n+1,d}) \cap \left[ X_{0,n+1,d} \right]^{\vir} \right].
\end{multline}

\subsection{Quantum Riemann-Roch theorem and twisted theory}
\label{subsec:QRR}
We introduce quantum Riemann-Roch theorem \cite[Corollary 4]{CG:quantum}, which relates twisted Givental cones via some transcendental operators. 
We also explain relationships between the Gromow-Witten theory of a vector bundle (resp. a subvariety) and that of a base space (resp. an ambient space) in terms of twisted theories.
Note that we will use the material in this subsection only in Section \ref{sec:twist_mirror}.

\subsubsection{Quantum Riemann-Roch operator}
\label{subsubsec:QRR}
We let $X$ be a smooth projective variety and $\T$ be a complex torus acting on $X$ trivially. 
For $\chi\in H^2_\T(\pt)\setminus0$, we set
\[
s_k^\pm(\chi) =
\begin{cases}
0					&\text{for } k=0,	\\
\pm(-1)^{k-1}(k-1)!\chi^{-k}	&\text{for } k>0.
\end{cases}
\]
It is easy to see that 
\[
\te_\chi^\pm(\cdot) = \exp\left(\sum_{k=0}^\infty s_k^\pm(\chi) \cdot \ch_k(\cdot)\right).
\]

We take a vector bundle $V$ over $X$ and let $\chi$ be a non-zero element of $H^2_\T(\pt)$.
For the case $\bc=\te_\chi$ or $\bc=\te_\chi^{-1}$, we define the \emph{quantum Riemann-Roch operator} $\Delta_{(V,\bc)}(-z)$ as follows:
\[
\Delta_{(V,\bc)}(-z) := \exp\left[ \sum_{l,m\geq0} s^\pm_{l+m-1}(\chi) \cdot \frac{B_m}{m!} \cdot \ch_l(V) \cdot z^{m-1} \right]
\]
where we set $s_{-1}=0$ and $B_m$ is the Bernoulli number defined by $\sum_{m=0}^\infty (B_m/m!) x^m = x/(e^x-1)$.
Since $\ch_l(V)$ is nilpotent for $l>0$ and $\ch_l(V)=0$ for $l>\dim X$, the operator $\Delta_{(V,\bc)}(\lambda,z)$ is well-defined and belongs to $H^*(X)[\chi^{-1}](\!(z)\!)\cap H^*(X)[z,z^{-1}][\![\chi^{-1}]\!]$.\footnote{For any $\C$-algebra $R$ and a specific non-zero element $\chi\in H^2_\T(\pt)$, we set $R[\chi^{-1}]$ to be a subring of $R(\lambda)$ consisting of elements of finite sums $\sum_{i=0}^k r_i \chi^{-i}$ where $k$ is a non-negative integer and $r_i\in R$ for any $i$, and $R[\![\chi^{-1}]\!]$ to be the canonical completion of $R[\chi^{-1}]$.}
For vector bundles $V_1,\dots,V_N$ and characteristic classes $\bc^1,\dots,\bc^N$ with each class $\bc^i$ being $\te_{\chi_i}$ or $\te_{\chi_i}^{-1}$ for some $\chi_i\in H^2_\T(\pt)\setminus0$, we define the operator $\Delta_{(\vecV,\vecbc)}(z)$ as a product $\prod_{i=1}^N\Delta_{(V_i,\bc^i)}(z)$.

\begin{theorem}[{\cite[Corollary 4]{CG:quantum}},{\cite[Theorem 1.1]{Tonita}}]
Let $(\vecV,\vecbc)$ be twist data with the characteristic class $\bc^i$ $(1\leq i\leq N)$ being of the form $\te_{\chi_i}$ or $\te_{\chi_i}^{-1}$ for some non-zero element $\chi_i\in H^2_\T(\pt)$.
Then we have 
\[
\Delta_{(\vecV,\vecbc)}(-z)\cL_{X,(\cO_X,1)} = \cL_{X,(\vecV,\vecbc)}.
\]
In particular, for any $\C(\lambda)[\![\Eff(X)]\!][\![t]\!]$-valued point $\f$ on $\cL_{X,(\cO_X,1)}$, the function $\Delta_{(\vecV,\vecbc)}(-z)\cdot\f$ is a $\C(\lambda)[\![\Eff(X)]\!][\![t]\!]$-valued point on $\cL_{X,(\vecV,\vecbc)}$.
\end{theorem}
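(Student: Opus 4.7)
The plan is to follow the Coates-Givental approach: establish the statement infinitesimally in a one-parameter family of twists and integrate. By the multiplicativity $\Delta_{(\vecV,\vecbc)}(z) = \prod_i \Delta_{(V_i,\bc^i)}(z)$ on one side and the corresponding multiplicativity of the integrand $\prod_i \bc^i((W_i)_{0,n,d})$ in the twisted correlators on the other, it suffices to treat a single twist datum $(V, \te_\chi^{\pm})$; the general case follows by iterating the one-factor statement.

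First I would deform into a parameter family. Using $\te_\chi^{\pm}(\cdot) = \exp(\sum_k s_k^{\pm}(\chi)\ch_k(\cdot))$, replace the constants $s_k^{\pm}(\chi)$ by formal parameters $s=(s_0,s_1,\ldots)$ to produce a family of twisted theories with insertion class $\exp(\sum_k s_k\ch_k(V_{0,n,d}))$, a corresponding family of cones $\cL^s$, and a deformed operator $\Delta^s(z)$ defined by the same formula as $\Delta_{(V,\bc)}(z)$ with $s_k^{\pm}(\chi)$ replaced by $s_k$. At $s=0$ both sides reduce to $\cL_{X,(\cO_X,1)}$, and specialization at $s_k = s_k^{\pm}(\chi)$ recovers the desired equality. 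It therefore suffices to check that $s \mapsto \Delta^s(-z)\cL_{X,(\cO_X,1)}$ and $s \mapsto \cL^s$ agree infinitesimally, i.e.\ that for each $l\ge 0$ the operator $(\partial_{s_l}\Delta^s)\cdot(\Delta^s)^{-1}$ acts on $\cH_{X,(V,\te_\chi^{\pm})}$ as an infinitesimal symmetry tangent to $\cL^s$.

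Next, compute this infinitesimal motion on the Gromov-Witten side via Mumford's Grothendieck-Riemann-Roch. Differentiating a twisted correlator in $s_l$ inserts an extra factor $\ch_l(V_{0,n,d})$ in the integrand. Applying GRR to the universal curve $\pi\colon X_{0,n+1,d}\to X_{0,n,d}$ with coefficients in $\ev_{n+1}^*V$ rewrites $\ch_l(V_{0,n,d})$ as a sum of tautological classes on $X_{0,n,d}$: pullbacks $\ev_i^*\ch_a(V)$ from $X$ multiplied by powers of the $\psi$-classes at the marked points, together with contributions supported on the boundary divisors (nodes of the universal curve), all weighted by Bernoulli numbers $B_m/m!$ coming from the Todd class of the relative dualizing sheaf. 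Via the gluing maps to moduli of lower complexity, the boundary terms reduce to quadratic expressions in points of $\cL^s$.

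Finally, I would reassemble the resulting identity in symplecto-geometric language. The linear (non-boundary) terms reproduce exactly the coefficients $s_{l+m-1}^{\pm}(\chi)\cdot (B_m/m!)\cdot\ch_l(V)\cdot z^{m-1}$ appearing in the exponent of $\Delta^s$, while the quadratic boundary terms match the condition that $\partial_{s_l}\log\Delta^s(-z)$ is an infinitesimal symplectic transformation tangent to $\cL^s$, i.e.\ lies in the Lie algebra of loop-group elements preserving the Givental cone. Integrating in $s$ from $0$ to $s^{\pm}(\chi)$ then yields the theorem. The main obstacle I expect is the bookkeeping of the GRR computation: one must verify that the boundary contributions assemble \emph{exactly} into the quadratic correction required by the Lagrangian condition, using Mumford's identity for cotangent lines at nodes and string/dilaton-type manipulations at gluing strata, with no leftover terms. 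This delicate matching is where the Bernoulli coefficients interact most subtly with the geometry of $X_{0,n,d}$.
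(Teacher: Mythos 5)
The paper does not prove this theorem; it is quoted as a black-box result from Coates--Givental \cite{CG:quantum} and Tonita \cite{Tonita}, so there is no in-paper argument to compare against. Your sketch is a faithful high-level outline of the actual Coates--Givental proof: reduce to a single twist by multiplicativity, deform the specific coefficients $s_k^{\pm}(\chi)$ to free formal parameters $s_k$, compute $\partial_{s_l}$ of the twisted potential via Mumford's Grothendieck--Riemann--Roch applied to $\pi\colon X_{0,n+1,d}\to X_{0,n,d}$, identify the linear tautological terms with the exponent of $\Delta^s$ and the boundary (nodal) terms with the quadratic Hamiltonian of the corresponding infinitesimal symplectic transformation, and integrate in $s$. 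You also correctly flag the genuinely delicate step, namely verifying that the boundary contributions organize exactly into the quadratic part required by the Lagrangian-cone condition with the right Bernoulli weights and no remainder; this is carried out in full in \cite{CG:quantum} (and, for the equivariant refinement needed to make sense of $\te_\chi^{\pm}$ with $\chi\in H^2_\T(\pt)$, in Tonita's paper). Since in the setting used here $\T$ acts trivially on $X$, the equivariance enters only through coefficients, so your non-equivariant sketch does apply; if $\T$ acted nontrivially one would need the additional care of \cite{Tonita}. As a proposal it is sound, but note that for the purposes of this paper the statement is simply invoked, not reproved.
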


\begin{remark}
The twist data $(\cO_X,1)$ is trivial, that is, the $(\cO_X,1)$-twisted Gromov-Witten invariants are by definition the untwisted ones.
However, there is a subtle difference between $\cL_{X,\T}$ and $\cL_{X,(\cO_X,1)}$.
In fact, $\cL_{X,\T}$ is defined in $\cH_{X,\T}=H^*_\T(X)_{\loc}(\!(z^{-1})\!)[\![\Eff(X)]\!]$, while $\cL_{X,(\cO_X,1)}$ is defined in $\cH_{X,(\cO_X,1)}=H^*_\T(X)_{\loc}(\!(z)\!)[\![\Eff(X)]\!]$.
\end{remark}

We now consider the inverse Euler twist.
For any $\C(\lambda)[\![\Eff(X)]\!][\![t]\!]$-valued point $\f$ on $\cL_{X,(W,\te_{\chi}^{-1})}$, $\f|_{Q^d\to Q^d\chi^{-d\cdot c_1(W)}}$ is a $\C(\lambda)[\![\Eff(X)]\!][\![t]\!]$-valued point on $\cL_{X,(W,e_\chi^{-1})}$: this follows from the definition of twisted cones and the fact that $\rank((W)_{0,n,d}) = \rank(W) + \int_dc_1(W)$.
We define the operator $\Delta_W^\chi$ on $H^*_\T(X)_{\loc} (\!(z)\!) [\![\Eff(X)]\!][\![t]\!]$ as follows:
\begin{equation}
\label{eqn:modified_QRR}
\Delta_W^\chi\f := \left.\left(\Delta_{(W,\te^{-1}_\chi)}(z)\f\right)\right|_{Q^d\to Q^d\chi^{-d\cdot c_1(W)}}.
\end{equation}
\begin{corollary}
Let $(\vecV,\vecbc)$ be any twist data.
For any $\C(\lambda)[\![\Eff(X)]\!][\![t]\!]$-valued point $\f(-z)$ on $\cL_{X,(\vecV,\vecbc)}$, the form $(\Delta_W^\chi\f)(-z)$ gives a $\C(\lambda)[\![\Eff(X)]\!][\![t]\!]$-valued point on $\cL_{X,(\vecV,\vecbc),(W,e_\chi^{-1})}$.
\end{corollary}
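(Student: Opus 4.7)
The plan is to split the proof into two independent steps: first a relative form of the quantum Riemann--Roch theorem that adjoins a $(W,\te_\chi^{-1})$-twist on top of the existing $(\vecV,\vecbc)$-twists, and second the Novikov substitution $Q^d\mapsto Q^d\chi^{-d\cdot c_1(W)}$ that converts a $(W,\te_\chi^{-1})$-twist into a $(W,e_\chi^{-1})$-twist. Both ingredients already appear in the excerpt; the main task is to verify that they remain valid in the presence of arbitrary ambient twist data $(\vecV,\vecbc)$.

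For the first step I would establish the identity
\[
\Delta_{(W,\te_\chi^{-1})}(-z)\cdot \cL_{X,(\vecV,\vecbc)} \;=\; \cL_{X,(\vecV,\vecbc),(W,\te_\chi^{-1})}.
\]
If $(\vecV,\vecbc)$ consists only of $\te$-type twists, this is immediate from the excerpt's QRR theorem: apply it to both $(\vecV,\vecbc)$ and $(\vecV,\vecbc),(W,\te_\chi^{-1})$ and divide, using that all the operators $\Delta_{(V_i,\bc^i)}(-z)$ and $\Delta_{(W,\te_\chi^{-1})}(-z)$ are cup-product operators by classes in $H^*(X)[\chi^{-1}](\!(z)\!)$ and therefore commute and factor multiplicatively. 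For general $(\vecV,\vecbc)$, the same identity should be obtained from the universal virtual-class argument of \cite{CG:quantum}, which treats each twist factor modularly; alternatively, one can first convert each $e_{\chi_i}^{\pm 1}$-twist inside $(\vecV,\vecbc)$ to a $\te_{\chi_i}^{\pm 1}$-twist via the Step~2 substitution, apply the $\te$-case, and then convert back. In either case, $\Delta_{(W,\te_\chi^{-1})}(-z)\cdot \f(-z)$ represents a point on $\cL_{X,(\vecV,\vecbc),(W,\te_\chi^{-1})}$.

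For the second step I would replay the computation sketched in the paragraph preceding \eqref{eqn:modified_QRR}, now in the presence of the ambient twists $(\vecV,\vecbc)$. The identity
\[
e_\chi^{-1}((W)_{0,n,d}) \;=\; \chi^{-\rank(W)-\int_d c_1(W)}\,\te_\chi^{-1}((W)_{0,n,d})
\]
together with the matching $\chi^{-\rank W}$ discrepancy in the symplectic structure shows that the degree-$d$ contribution to the $(\vecV,\vecbc),(W,e_\chi^{-1})$-twisted cone differs from the corresponding $(\vecV,\vecbc),(W,\te_\chi^{-1})$ contribution by a factor $\chi^{-\int_d c_1(W)}$, which is exactly absorbed by the Novikov substitution $Q^d\mapsto Q^d\chi^{-d\cdot c_1(W)}$. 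The remaining twist factors $\bc^i((V_i)_{0,n,d})$ are unaffected by this substitution since they involve the independent characters $\chi_i$. Combining Steps 1 and 2 with the definition \eqref{eqn:modified_QRR}, we conclude that $(\Delta_W^\chi\f)(-z)\in\cL_{X,(\vecV,\vecbc),(W,e_\chi^{-1})}$.

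The main obstacle I anticipate is the relative form of QRR in Step 1 under arbitrary ambient twists, since the excerpt's statement covers only the absolute case with $\te$-type twists; the argument must either revisit \cite{CG:quantum} or bootstrap through Step 2. The remaining bookkeeping, especially tracking the overall $\chi^{-\rank W}$ coming from the change of symplectic form, is routine.
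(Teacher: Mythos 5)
Your two-step decomposition --- first multiplying by $\Delta_{(W,\te_\chi^{-1})}(-z)$ to pass from $\cL_{X,(\vecV,\vecbc)}$ to $\cL_{X,(\vecV,\vecbc),(W,\te_\chi^{-1})}$, then applying the Novikov substitution $Q^d\mapsto Q^d\chi^{-d\cdot c_1(W)}$ to convert the new $\te_\chi^{-1}$-twist to $e_\chi^{-1}$ --- is exactly the implicit argument the paper relies on, so your proposal is correct and follows the same route. Your observation that the stated QRR theorem covers only $\te$-type ambient twists is a genuine fine point that the paper glosses over, and the bootstrap you propose (converting $e$-type factors of $(\vecV,\vecbc)$ to $\te$-type by Novikov rescaling, applying the $\te$-case, and converting back, using that $\Delta_{(W,\te_\chi^{-1})}$ is Novikov-independent and hence commutes with the rescalings) closes it cleanly.
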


\subsubsection{Gromov-Witten theory of vector bundles}
\label{subsubsec:GW_vector_bundle}
We let $B$ be a smooth projective variety and $V=\bigoplus_{i=1}^N V_i$ be a direct sum of vector bundles over $B$.
By considering the diagonal action of $\T=(\C^\times)^N$ on $V$, we can study $\T$-equivariant Gromov-Witten theory of $V$.

We assume that its dual $V^\vee$ is globally generated, which implies that $V$ is semi-projective (\cite[Lemma 2.1]{IK:quantum}).
In this case, a $\Frac(H^*_\T(\pt))[\![\Eff(B)]\!][\![t]\!]$-valued point of $\cL_{V,\T}$ is a point on 
\[
\cH_{V,\T}^{\pol} := H^*_\T(V)[z,z^{-1}][\![\Eff(B)]\!]
\]
if its non-negative part (as a $z$-series) belongs to $-z + H^*_\T(V)[z][\![\Eff(B)]\!][\![t]\!]$, i.e., it is polynomial in the equivariant parameters $\lambda$.
This is because $\psi_1$ is nilpotent which follows from the fact that an image of each $\T$-fixed stable map $C\to V$ is in $V^\T=B$.

Applying the virtual localization formula (Corollary \ref{thm:vir_loc}), we have
\[
\corr{\alpha_1\psi^{k_1},\dots,\alpha_n\psi^{k_n}}^{V,\T}_{0.n,d} = \corr{\alpha_1\psi^{k_1},\dots,\alpha_n\psi^{k_n}}^{B,(V,e_\T^{-1})}_{0.n,d}.
\]
Hence Gromov-Witten theory of $V$ is equivalent to $(V,e_\T^{-1})$-twisted Gromov-Witten theory of $B$:
\[
\cL_{V,\T} \cap \cH_{V,\T}^{\pol} = \cL_{B,(V,e_\T^{-1})} \cap \cH_{V,\T}^{\pol}.
\]

For any ring $R$, we define $R[\lambda^{-1}]$ (resp. $R[\![\lambda^{-1}]\!]$) to be a ring $R[\lambda_1^{-1},\dots,\lambda_N^{-1}]$ (resp. $R[\![\lambda_1^{-1},\dots,\lambda_N^{-1}]\!]$) where $\lambda_i$ is a $\T$-equivariant parameter corresponding to the $i$-th projection $\T\to\C^\times$.
In Section \ref{sec:twist_mirror}, we will use the following lemma.

\begin{lemma}
\label{lem:vector}
Let $\f$ be a $\C[\lambda][\![\Eff(B)]\!][\![x]\!]$-valued point on $\cL_{V,\T}$ whose non-negative part (as a $z$-series) lies in $H^*(B)[z][\![\Eff(B)]\!][\![x]\!]$, i.e., it contains no equivariant parameters.
Then $(\Delta_V^\lambda|_{z\to-z})\f$ is a $\C[\![\Eff(B)]\!][\![x,\lambda^{-1}]\!]$-valued point on $\cL_B$.
\end{lemma}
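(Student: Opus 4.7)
The plan is to combine two ingredients from the preceding subsections: the identification $\cL_{V,\T}\cap\cH^{\pol}_{V,\T}=\cL_{B,(V,e_\T^{-1})}\cap\cH^{\pol}_{V,\T}$ arising from virtual localisation at $V^\T=B$, together with the quantum Riemann--Roch corollary which recognises $\Delta_V^\lambda$ as the operator intertwining $\cL_B$ with the twisted cone $\cL_{B,(V,e_\T^{-1})}$. With these in hand, the lemma reduces to the claim that $\Delta_V^\lambda|_{z\to -z}$ realises the inverse passage on the relevant polynomial subspace and lands in the asserted $\lambda^{-1}$-completion of the coefficient ring.

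First, since $\f$ is polynomial in $\lambda$ and its non-negative $z$-part is $\lambda$-free, $\f$ lies in $\cH^{\pol}_{V,\T}$ and hence, via the displayed identification just above the lemma, may be viewed as a $\C[\lambda][\![\Eff(B)]\!][\![x]\!]$-valued point on the twisted cone $\cL_{B,(V,e_\T^{-1})}$. The Corollary of Subsection~\ref{subsubsec:QRR} (applied with $W=V$, $\chi=\T$, and trivial prior twist data) identifies $\cL_{B,(V,e_\T^{-1})}$ with the image $\Delta_V^\lambda\,\cL_B$. I would then verify, using the defining exponential formula
\[
\Delta_{(V,\te_\lambda^{-1})}(-z)=\exp\left[\sum_{l,m\geq 0} s^-_{l+m-1}(\lambda)\cdot\frac{B_m}{m!}\cdot\ch_l(V)\cdot z^{m-1}\right]
\]
together with the matching $z\leftrightarrow -z$ sign conventions used to describe the points of the various Givental cones in Subsections~\ref{subsubsec:Lagrangian_cone1}--\ref{subsubsec:equiv_cone}, that $\Delta_V^\lambda|_{z\to -z}$ inverts the above bijection. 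Consequently $(\Delta_V^\lambda|_{z\to -z})\f$ is a valued point on $\cL_B$.

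The most delicate step, and the main obstacle, is the ring-theoretic refinement: the output lies in $\C[\![\Eff(B)]\!][\![x,\lambda^{-1}]\!]$ rather than in the a priori available $\C(\lambda)[\![\Eff(B)]\!][\![x]\!]$; that is, no positive powers of the equivariant parameters survive. The coefficients of $\Delta_V^\lambda|_{z\to -z}$ contribute only non-positive powers of $\lambda$ via the symbols $s_k^{\pm}(\chi)\sim \chi^{-k}$; however, its Novikov shift $Q^d\mapsto Q^d\lambda^{-d\cdot c_1(V)}$ could in principle introduce positive $\lambda$-powers, since global generation of $V^\vee$ forces $c_1(V)\cdot d\leq 0$ on effective classes. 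The hypothesis that the $z$-nonnegative part of $\f$ contains no equivariant parameters confines the $\lambda$-positive content of $\f$ to its $z$-negative part, and a careful $\lambda$-degree count, parallel to the projective bundle argument in \cite{IK:quantum}, shows that the putative positive $\lambda$-contributions in the output cancel, leaving a well-defined formal series in $\lambda^{-1}$.
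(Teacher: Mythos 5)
Your overall plan matches the paper's opening moves: use the identification $\cL_{V,\T}\cap\cH_{V,\T}^{\pol}=\cL_{B,(V,e_\T^{-1})}\cap\cH_{V,\T}^{\pol}$ to view $\f$ as a twisted-cone point, and invoke quantum Riemann--Roch to pass to $\cL_B$. However, the central claim of the lemma --- that the output lies in the $\lambda^{-1}$-completion --- is exactly the step you defer, and the mechanism you propose for it is not the one that works.

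The paper does not argue by cancellation. It observes that it suffices to show $\f|_{Q^d\to Q^d\lambda^{d\cdot c_1(V)}}\in H^*(B)[\lambda^{-1}][\![\Eff(B)]\!][\![x]\!]$ (since the $\Delta_{(V,\te_\lambda^{-1})}$-factor already lies in $H^*(B)[\lambda^{-1}](\!(z)\!)$), and then writes this shifted function explicitly as the $(V,\te_\lambda^{-1})$-twisted $J$-function of $B$:
\[
-\bone z + \bt(z) + \sum_{n,d}\sum_{i\in I}\frac{Q^d}{n!}\corr{\tfrac{\phi_i}{-z-\psi},\bt(\psi),\dots,\bt(\psi)}_{0,n+1,d}^{B,(V,\te_\lambda^{-1})}\phi^i\,\te_\lambda(V).
\]
The hypothesis that the $z$-nonnegative part of $\f$ is $\lambda$-free is used precisely to guarantee that the input $\bt(z)$ lies in $H^*(B)[z][\![\Eff(B)]\!][\![x]\!]$ with \emph{no} equivariant parameters, and then the classes $\te_\lambda(V)$ and $\te_\lambda^{-1}(V_{0,n+1,d})$ appearing in the correlators are, by their very normalization, power series in $\lambda_1^{-1},\dots,\lambda_N^{-1}$. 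Every term is therefore manifestly in $H^*(B)[\lambda^{-1}][\![\Eff(B)]\!][\![x]\!]$ --- there is nothing to cancel. Your claim that ``positive $\lambda$-contributions in the output cancel'' misstates the structure of the argument and, since it is precisely the content of the lemma, leaves a genuine gap. Relatedly, the assertion that $\Delta_V^\lambda|_{z\to-z}$ ``inverts the above bijection'' is not literally true: it is the same operator with $z$ negated, not $(\Delta_V^\lambda)^{-1}$; the $m=1$ Bernoulli term makes $\Delta_{(V,\te_\lambda^{-1})}(z)^{-1}$ and $\Delta_{(V,\te_\lambda^{-1})}(-z)$ differ by a factor $\te_\lambda^{\mp1}(V)$, and the Novikov shift has only one sign. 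The paper avoids the need to sort this out abstractly by writing down the twisted-$J$-function form directly, which is the missing idea in your proposal.
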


\begin{proof}
It is enough to prove that $\f|_{Q^d\to Q^d\lambda^{d\cdot c_1(V)}}$ belongs to $H^*(B)[\lambda^{-1}][\![\Eff(B)]\!][\![x]\!]$ since $\Delta_{(V,\te_\lambda^{-1})}\in H^*(B)[\lambda^{-1}](\!(z)\!)$. 
By the definition of $\cL_{V,\T}$, the function $\f|_{Q^d\to Q^d\lambda^{d\cdot c_1(V)}}$ can be written as
\[
-\bone z + \bt(z) + \sum_{ \substack{n\geq0,d\in\Eff(B) \\ (n,d)\neq(0,0),(1,0)} } \sum_{i\in I} \frac{Q^d}{n!} \corr{\frac{\phi_i}{-z-\psi},\bt(\psi),\dots,\bt(\psi)}_{0,n+1,d}^{B, (V,\te_\lambda^{-1})} \phi^i \te_\lambda(V)
\]
where $\{\phi_i\}_{i\in I}$ is a basis of $H^*(B)$, $\{\phi^i\}_{i\in I}$ is a dual basis and $\bt(z)\in H^*(B)[z][\![\Eff(B)]\!][\![x]\!]$ with $\bt(z)|_{(Q,x)=0}=0$.
Since $\te_\lambda(V)$ and $\te_\lambda^{-1}(V_{0,n+1,d})$ are power series in $\lambda_1^{-1},\dots,\lambda_N^{-1}$, this form indeed belongs to $H^*(B)[\lambda^{-1}][\![\Eff(B)]\!][\![x]\!]$.  
\end{proof}

\subsubsection{Gromov-Witten theory of subvarieties}
\label{subsubsec:GW_subvariety}
Let $X$ be a smooth projective variety, and let $V$ be a vector bundle over $X$.
We assume that $V$ is globally generated,\footnote{This assumption implies that $V$ is convex, i.e., for any genus zero stable map $f\colon C\to X$, the first cohomology $H^1(C,f^*V)$ vanishes..} which implies that $V_{0,n,d}$ is a vector bundle for any $n\in\Z_{\geq0}$ and $d\in\Eff(X)$ with $(n,d)\neq(0,0),(1,0),(2,0)$.

We take a regular section of $V$ and write its zero-scheme as $\iota\colon Z\to X$.
We let $\T\times\C^\times$ act on $X$ trivially, and write equivariant parameters for $\T$ (resp. the second factor) as $\lambda_1,\dots,\lambda_N$ (resp. $\mu$).
Let $W_i$, $i=1,\dots,N$, be a vector bundle over $X$ and let  $\bc^i$, $i=1,\dots,N$, be the characteristic class $e_{\chi_i}^{-1}$ associated with a character $\chi_i\colon \T\times\C^\times \to \C^\times$ which is non-trivial on $\T\times\{1\}$.
In this situation, we can relate $\cL_{X,(V,e_\mu),(\vecW,\vecbc)}$ and $\cL_{Z,(\iota^*\vecW,\vecbc|_{\mu=0})}$.
\begin{theorem}[see also {\cite[Section 2.1]{Pandharipande:rational}}, {\cite[Proposition 2.4]{Iritani:periods}}]
\label{thm:subvariety}
For any $\C(\lambda,\mu)[\![\Eff(X)]\!][\![x]\!]$-valued point $\f$ on $\cL_{X,(V,e_\mu),(\vecW,\vecbc)}$ whose limit $\lim_{\mu\to0}\f$ exists, $\lim_{\mu\to0}\iota^*\f$ gives a $\C(\lambda)[\![\Eff(X)]\!][\![x]\!]$-valued point on $\cL_{Z,(\iota^*\vecW,\vecbc|_{\mu=0})}$.
\end{theorem}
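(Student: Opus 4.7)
The plan is to combine the pushforward expression \eqref{eqn:twist_push} for points on the twisted Lagrangian cone with the Kim-Kresch-Pantev (KKP) identity
\[
c_{\mathrm{top}}(V_{0,n+1,d}) \cap [X_{0,n+1,d}]^{\vir} = \sum_{d'\colon \iota_* d' = d} (j_{n+1,d'})_* [Z_{0,n+1,d'}]^{\vir},
\]
where $j_{n+1,d'}\colon Z_{0,n+1,d'}\hookrightarrow X_{0,n+1,d}$ is the natural inclusion (the convexity of the globally generated $V$ makes $V_{0,n+1,d}$ a genuine vector bundle, so KKP applies). The key point is that the twist class $e_\mu(V_{0,n+1,d})$ specializes to $c_{\mathrm{top}}(V_{0,n+1,d})$ at $\mu = 0$, so KKP lets one factor the $\mu = 0$ limit of the internal pushforward through $\iota$.

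First, I would write $\f$ in the form \eqref{eqn:twist_push}:
\[
\f = -\bone z + \bt(z) + \frac{\prod_{i=1}^N e_{\chi_i}(W_i)}{e_\mu(V)}\sum_{n,d}\frac{Q^d}{n!}(\ev_1^X)_*\!\left[\frac{\prod_{j=2}^{n+1}(\ev_j^X)^*\bt(\psi_j)}{-z-\psi_1}\cdot e_\mu(V_{0,n+1,d})\prod_{i=1}^N \frac{[X_{0,n+1,d}]^{\vir}}{e_{\chi_i}((W_i)_{0,n+1,d})}\right],
\]
then apply $\iota^*$ and pass to $\mu\to 0$. Using $\ev_1^X\circ j_{n+1,d'}=\iota\circ\ev_1^Z$ together with the projection formula, KKP rewrites the $\mu=0$ leading contribution of the inner pushforward as $\iota_*$ of the analogous integrand on $Z_{0,n+1,d'}$. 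Applying $\iota^*$ then introduces $c_{\mathrm{top}}(\iota^*V)$ via $\iota^*\iota_* = c_{\mathrm{top}}(\iota^*V)\cup(-)$, which formally cancels the factor $c_{\mathrm{top}}(\iota^*V)^{-1}$ produced by restricting the prefactor $1/e_\mu(V)$ to $Z$ at $\mu=0$. Reindexing the Novikov sum through $\iota_*\colon\Eff(Z)\to\Eff(X)$, I arrive at the pushforward expression \eqref{eqn:twist_push} for a point on $\cL_{Z,(\iota^*\vecW,\vecbc|_{\mu=0})}$ with input parameter $\iota^*\bt(z)|_{\mu=0}$.

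The main obstacle will be justifying the $\mu\to 0$ cancellation rigorously. Working over $H^*(X)(\mu)$, where $c_{\mathrm{top}}(V)$ is generally not invertible, the factor $1/e_\mu(V)$ has a pole of order $\rank V$ at $\mu=0$, so individual summands of $\f$ diverge as $\mu\to 0$; the hypothesis that $\lim_{\mu\to 0}\f$ exists is precisely the statement that the poles from the prefactor and the higher-order $\mu$-terms of the pushforward conspire to cancel. The standard device, used in \cite{Pandharipande:rational} and \cite[\S 2]{Iritani:periods}, is to reinterpret the $(V,e_\mu)$-twisted theory of $X$ as the $\C^\times$-equivariant Gromov-Witten theory of the total space $V\to X$ (with $\C^\times$ scaling the fibers with weight $\mu$) and then use virtual localization along the regular section cutting out $Z$; the additional $(\vecW,\vecbc)$-twists commute with the $\C^\times$-action and pass through the argument unchanged. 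Once the cancellation is verified, the identification with \eqref{eqn:twist_push} on $Z$ is purely formal.
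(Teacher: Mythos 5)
Your overall plan mirrors the paper's: write $\f$ via \eqref{eqn:twist_push}, apply $\iota^*$, and invoke Kim--Kresch--Pantev functoriality to convert the $\mu=0$ specialization of the $e_\mu$-twist into a pushforward from $Z_{0,n+1,d}$, with the pole of $e_\mu(V)^{-1}$ cancelled against the factor produced by $\iota^*\iota_*$. The genuine gap is in the ``standard device'' you invoke to make the $\mu\to 0$ cancellation rigorous. Reinterpreting a twisted theory as the $\C^\times$-equivariant Gromov--Witten theory of the total space of $V$, with $\C^\times$ scaling the fibers, works for the $(V,e_\mu^{-1})$-twist and \emph{not} for the $(V,e_\mu)$-twist: Section~\ref{subsubsec:GW_vector_bundle} of the paper records that virtual localization on the total space introduces $e_\T(V_{0,n,d})^{-1}$, the \emph{inverse} Euler class. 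There is no analogous total-space model for the plain Euler-class twist, and ``virtual localization along the regular section'' is not a defined operation --- localization runs along fixed loci, not sections; perhaps you are conflating it with cosection localization, which is a different technique and is not what is used here.

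What the paper actually does, following \cite[Proposition~2.4]{Iritani:periods}, is prove the identity
\[
\lim_{\mu\to 0}\, \iota^*\!\left( e_\mu(V)^{-1}\cdot {\ev_{X,1}}_*\!\left[ \frac{\prod_{i=2}^{n+1}\ev_{X,i}^*\bt(\psi_i)}{-z-\psi_1}\cdot e_\mu(V_{0,n+1,d})\cap\alpha\right]\right) = {\ev_{Z,1}}_*\!\left[\frac{\prod_{i=2}^{n+1}\ev_{Z,i}^*\iota^*\bt_0(\psi_i)}{-z-\psi_1}\cap s_0^!\alpha\right]
\]
directly via the refined Gysin morphism $s_0^!$ attached to the Cartesian square built from the zero section $s_0$ and the regular section $s$ defining $Z$; this is precisely the rigorous form of your heuristic $\iota^*\iota_* = c_{\mathrm{top}}(\iota^*V)\cup(-)$ cancellation, controlled so that neither side is ever divided by the non-invertible class $c_{\mathrm{top}}(\iota^*V)$. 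Your outline is correct through the KKP step, but the final step should cite the refined-Gysin limit argument of \cite{Iritani:periods} rather than a total-space reinterpretation, which does not apply to this flavor of twist.
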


\begin{proof}
We consider the following diagram:
\[
\xymatrix@C=50pt{
Z_{0,n+2,d}\ar[r]^-{\ev_{Z,n+2}}\ar[d]_-{\pi_Z}\ar[rd]^-{\iota_{n+2}}	&	Z\ar@{^{(}->}[rd]^-\iota					&		\\
Z_{0,n+1,d}\ar[rd]^{\iota_{n+1}}							&	X_{0,n+2,d}\ar[r]_-{\ev_{X,n+2}}\ar[d]^-{\pi_X}	&	X	\\
											&	X_{0,n+1,d}							&
}
\]
where $Z_{0,n,d} := \amalg_{d'\colon\iota_*d'=d} Z_{0,n,d'}$, $\pi_X$ and $\pi_Z$ denote the forgetful map for the last marking, $\ev_{X,i}$ (resp. $\ev_{Z,i}$) denotes the $i$-th evaluation map for $X_{0,n+2,d}$ (resp. $Z_{0,n+2,d}$), and $\iota_n\colon Z_{0,n,d}\to X_{0,n,d}$ denotes the map induced by $\iota$.
We note that this diagram is commutative, and in particular the square located in the lower left is a fiber diagram.
Therefore, we have
\begin{equation}
\label{eqn:twist_subvar}
(\iota^*W)_{0,n+1,d} := {\pi_Z}_* \ev_{Z,n+2}^* \iota^* W = \iota_{n+1}^*W_{0,n+1,d}
\end{equation}
for any vector bundle $W$ over $X$.

Let $s_0^!$ be a refined Gysin map \cite{Fulton,Vistoli} associated to the fiber diagram
\[
\xymatrix{
Z_{0,n+1,d}\ar[r]^-{\iota_{n+1}}\ar[d]_-{\iota_{n+1}}	&	X_{0,n+1,d}\ar[d]^-s	\\
X_{0,n+1,d}\ar[r]_-{s_0}					&	V_{0,n+1,d}
}
\]
where $s_0$ is a map induced by the zero section $X\to V$, and $s$ is a map induced by a regular section of $V$ defining the subvariety $Z\hookrightarrow X$.
From the functoriality of virtual fundamental classes \cite{KKP:functoriality}
\[
s_0^![X_{0,n,d}]^{\vir} = \sum_{d'\colon\iota_*d'=d} [Z_{0,n,d'}]^{\vir}
\]
and \eqref{eqn:twist_subvar}, we have
\begin{equation}
\label{eqn:twist_functorial}
s_0^!\left( \prod_{j=1}^N\bc^j((W_j)_{0,n,d}) \cap [X_{0,n,d}]^{\vir} \right) = \prod_{j=1}^N\bc^j((\iota^*W_j)_{0,n,d}) \cap \left( \sum_{d'\colon\iota_*d'=d} [Z_{0,n,d'}]^{\vir} \right).
\end{equation}

Let $\f$ be a $\C(\lambda,\mu)[\![\Eff(X)]\!][\![x]\!]$-valued point on $\cL_{X,(V,e_\mu),(\vecW,\vecbc)}$ with well-defined limit $\lim_{\mu\to0}\f$.
From \eqref{eqn:twist_push}, $\f$ can be written in the form of
\begin{multline*}
-\bone z + \bt(z) + \sum_{ \substack{n\geq0,d\in\Eff(X) \\ (n,d)\neq(0,0),(1,0)} } \frac{Q^d}{n!} \cdot e_\mu(V)^{-1} \cdot \prod_{j=1}^N\bc^j(W_j)^{-1} \\
\cdot {\ev_{X,1}}_* \left[ \frac{\prod_{i=2}^{n+1}\ev_{X,i}^*\bt(\psi_i)}{-z-\psi_1} \cdot e_\mu(V_{0,n+1,d}) \cdot \prod_{j=1}^N \bc^j((W_j)_{0,n+1,d}) \cap \left[ X_{0,n+1,d} \right]^{\vir} \right]
\end{multline*}
for some $\bt(z)\in H^*(X)\otimes\C(\lambda,\mu)[\![z]\!][\![\Eff(X)]\!][\![x]\!]$.
Arguing as the proof of \cite[Proposition 2.4]{Iritani:periods}, for any $\alpha\in A_*(X_{0,n+1,d})$, we have
\begin{multline*}
\lim_{\mu\to0} \iota^* \left( e_\mu(V)^{-1} \cdot {\ev_{X,1}}_* \left[ \frac{\prod_{i=2}^{n+1}\ev_{X,i}^*\bt(\psi_i)}{-z-\psi_1} \cdot e_\mu((V_{0,n+1,d})) \cap \alpha \right] \right)	\\
= {\ev_{Z,1}}_* \left[ \frac{\prod_{i=2}^{n+1}\ev_{Z,i}^*\iota^*\bt_0(\psi_i)}{-z-\psi_1} \cap s_0^!\alpha \right]
\end{multline*}
where $\bt_0(z):=\bt(z)|_{\mu=0}$.
Note that $\bt_0(z)$ exists as an element of $H^*(X)\otimes\C(\lambda)[\![z]\!][\![\Eff(X)]\!][\![x]\!]$ since $\lim_{\mu\to0}\f$ exists.
Applying this for $\alpha=\prod_{j=1}^N\bc^j((W_j)_{0,n,d}) \cap [X_{0,n,d}]^{\vir}$ gives that
\begin{multline*}
\lim_{\mu\to0}\iota^*\f = -\bone z + \iota^*\bt_0(z) + \sum_{ \substack{n\geq0,d\in\Eff(X) \\ (n,d)\neq(0,0),(1,0)} } \frac{Q^d}{n!} \cdot \prod_{i=1}^N\bc^i(\iota^*W_i)^{-1} \\
\cdot {\ev_{Z,1}}_* \left[ \frac{\prod_{i=2}^{n+1}\ev_{Z,i}^*\bt(\psi_i)}{-z-\psi_1} \cdot \prod_{j=1}^N \bc^j((\iota^*W_j)_{0,n+1,d}) \cap \left[ Z_{0,n+1,d} \right]^{\vir} \right],
\end{multline*} 
which is a $\C(\lambda)[\![\Eff(X)]\!][\![x]\!]$-valued point on $\cL_{Z,(\iota^*\vecW,\vecbc|_{\mu=0})}$.
Here we use \eqref{eqn:twist_functorial}.

\end{proof}

\section{Toric bundles}
\label{sec:toric_bundle}
In this section, we introduce toric bundles.
We first review toric varieties, and then define toric bundles by doing the construction of toric varieties in a relative setting.
Note that they include toric bundles appearing in \cite{Brown} \cite{IK:quantum}.
We then investigate geometric structures of toric bundles: $\T$-equivariant cohomology ring (\ref{subsec:coh}), effective curves (\ref{subsec:effective}), $\T$-fixed loci and one-dimensional orbits (\ref{subsec:fixed_locus}).

\subsection{Construction}
We start with the triple $\sfL=(\LL^\vee,D,\omega)$ where
\begin{itemize}
\item $\LL^\vee$ is a free abelian group of rank $K$;
\item $D\colon\Z^N\to\LL^\vee$ is a map;
\item $\omega$ is a vector in $\LL^\vee\otimes\R$.
\end{itemize}
We denote as $D_i$ the image of the $i$-th standard basis vector of $\Z^N$ under the map $D$.
We identify the datum $D\colon\Z^N\to\LL^\vee$ with the subset $\{D_1,\dots,D_N\}$ of $\LL^\vee$. 
We set
\begin{align*}
\cA_\sfL:=&\left\{I\subset\{1,\dots,N\} : \omega\in\sum_{i\in I}\R_{\geq0}\cdot D_i\right\}, \\
\cU_\sfL:=& \C^N\setminus\bigcup_{I\notin\cA_\sfL}\C^I, \qquad \KL:=\Hom(\LL^\vee,\C^\times), \qquad \TL:=\Hom(\Z^N,\C^\times)
\end{align*}
where $\C^I:=\{(z_1,\dots,z_N)\in\C^N : z_i=0 \text{ for } i \notin I\}$.
We call elements of $\cA_\sfL$ \emph{anti-cones}.
By applying the functor $\Hom(\cdot,\C^\times)$ to the map $D\colon\Z^N\to\LL^\vee$, we get an embedding $\KL\hookrightarrow\TL$ and the torus $\KL$ acts on $\cU_\sfL$ via this embedding.
We note that $\cU_\sfL$ is naturally endowed with $\TL$-action.
\begin{definition}
\label{def:toric}
We call the triple $\sfL=(\LL^\vee,D,\omega)$ a \emph{smooth toric data} if it satisfies the following two conditions:
\begin{itemize} 
\item[(1)] The vector $\omega$ belongs to $\sum^N_{i=1}\R_{\geq0}\cdot D_i$.
\item[(2)] For any $I\in\cA_\sfL$, $\{D_i\}_{i\in I}$ generates $\LL^\vee$. 
\end{itemize}
\end{definition}
The condition (1) ensures that $\cU_\bL$ is nonempty and (2) ensures that the action of $\KL$ on $\cU_\sfL$ is free. 
Under these hypothesis, the quotient space $X_\sfL = \cU_\sfL/\KL$ becomes a smooth semi-projective toric variety. 

We consider a relative version of this construction.
We fix a smooth toric data $\sfL=(\LL^\vee,D\colon\Z^N\to\LL^\vee,\omega)$. 
Let $V_1,\dots,V_N$ be non-zero vector bundles over a smooth projective variety $B$ and set $r_i:=\rank V_i$.
We let $\KL$ act on $V_i$ fiberwise by the character $D_i\colon\KL\to\C^\times$ and define a toric bundle $\XLV$ over $B$ as follows:
\[
\cU_\sfL(\vecV):=\left( \bigoplus^N_{i=1}V_i \setminus \bigcup_{I\notin\cA_\sfL} \bigoplus_{i\in I}V_i \right), \qquad \XLV := \left. \cU_\sfL(\vecV) \right/ \KL.
\]
Note that $\cU_\sfL(V)$ is endowed with $\T$-action coming from the $\T$-actions on $V_1,\dots,V_N$, and it induces a $\T$-action on $\XLV$.
When $B=\pt$, then $\XLV$ is a smooth semi-projective toric variety, which we denote by $\X^\vecr$.
For a general base, $\XLV$ is an $\X^\vecr$-bundle over $B$.

\begin{definition}
Let $E\to B$ be a toric bundle obtained by the above procedure.
We say that $E$ is \emph{of split type}, or $E$ is a \emph{split toric bundle} if there is a smooth toric data $\sfL$ and line bundles $\{L_i\}$ over $B$ such that $E\to B$ is isomorphic to $\X_\sfL(\vecL)\to B$.
We say that $E$ is \emph{of non-split type}, or $E$ is a \emph{non-split toric bundle} if $E$ is not of split type.
\end{definition}

As explained in Section \ref{sec:intro}, mirror theorems for split toric bundles \cite{Brown} and (non-split) projective bundles \cite{IK:quantum} are already known.
We will prove a mirror theorem for (non-split) toric bundles (Theorem \ref{thm:mirror_thm}). 

\begin{proposition}
\label{prop:semi_proj}
\phantom{A}
\begin{itemize}
\item[$(1)$] The bundle $\XLV$ is projective if and only if the toric variety $X_\sfL$ is projective.
\item[$(2)$] The bundle $\XLV$ is semi-projective if the vector bundles $V_1^\vee,\dots,V_N^\vee$ are generated by global sections.
\end{itemize}
\end{proposition}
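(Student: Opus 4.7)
The plan is to prove (1) via the fiber-bundle structure and (2) via a closed embedding into the product of $B$ with a semi-projective ambient toric variety. For (1), the bundle $\XLV \to B$ is Zariski-locally trivial with fiber $\X^\vecr$: over any Zariski open $U \subset B$ on which all $V_i$ simultaneously trivialize, the relative GIT construction reduces to a product $U \times \X^\vecr$. The toric varieties $\X^\vecr$ and $X_\sfL$ are GIT quotients by the same torus $\KL$ at the same stability parameter $\omega$ (the former simply has each $D_i$ repeated $r_i$ times), so they are projective simultaneously. Thus the ``only if'' direction is immediate: a fiber is a closed subscheme of $\XLV$. For the ``if'' direction, I would show $\XLV \to B$ is proper and construct an ample line bundle on $\XLV$ by combining a relatively ample bundle from the GIT polarization---for a suitable integer $k$, the character $k\omega \in \LL^\vee$ linearizes to a line bundle $L_{k\omega}$ on $\XLV$ whose restriction to each fiber is ample, hence $L_{k\omega}$ is relatively ample over $B$---with the pullback of an ample bundle on the projective base $B$.

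For (2), global generation of each $V_i^\vee$ yields surjections $\cO_B^{n_i} \twoheadrightarrow V_i^\vee$, dual to inclusions $V_i \hookrightarrow \cO_B^{n_i}$ whose cokernels are locally free by fiberwise injectivity; hence these induce closed embeddings on total spaces. Letting $\KL$ act on $\C^{n_i}$ diagonally via the character $D_i$, these assemble into a $\KL$-equivariant closed embedding $V = \bigoplus V_i \hookrightarrow B \times \C^n$ with $n = \sum n_i$. Denoting by $\tilde\sfL$ the enlarged toric data in which each $D_i$ is repeated $n_i$ times (keeping $\omega$), I would verify that the stable loci correspond: $\cU_\sfL(\vecV) = V \cap (B \times \cU_{\tilde\sfL})$, since the anti-cone condition depends only on the support $\{i : v_i \neq 0\}$, and fiberwise injectivity of $V_i \hookrightarrow \C^{n_i}$ preserves this support under the embedding.

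Passing to $\KL$-quotients then gives a closed embedding $\XLV \hookrightarrow B \times \tilde X$ with $\tilde X = \C^n \sslash_\omega \KL$ a (possibly singular) toric variety. The conclusion follows from the standard fact that any GIT quotient of an affine variety by a reductive group is projective over the affine categorical quotient $\Spec(\C[z_1,\dots,z_n]^{\KL})$, so $\tilde X$ is semi-projective; the product $B \times \tilde X$ then inherits semi-projectivity from the projectivity of $B$, and so does the closed subscheme $\XLV$. The main technical point requiring care is the bookkeeping for the match of unstable loci under $V \hookrightarrow B \times \C^n$, though once the definitions are unpacked this is straightforward; the arguably more delicate step is constructing the ample line bundle in (1) and checking that fiberwise ampleness upgrades correctly to relative ampleness.
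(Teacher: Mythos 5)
Your proof of (2) is essentially the paper's: embed $V_i\hookrightarrow\cO_B^{\oplus n_i}$ using global generation of $V_i^\vee$, obtain a $\KL$-equivariant closed embedding of total spaces, and descend to a closed embedding $\XLV\hookrightarrow B\times\tilde{X}$ with $\tilde{X}$ a semi-projective toric variety; the paper cites $\tilde{X}$ being a smooth semi-projective toric variety from the (enlarged but still smooth) toric data rather than invoking the affine-categorical-quotient argument, and it takes the matching of stable loci for granted rather than spelling it out as you do, but these are cosmetic. The paper declines to prove (1) at all (``We only prove (2)''); your outline for (1) -- properness of the fiber bundle, fiberwise ampleness of the polarization line bundle upgrading to relative ampleness, then twisting by the pullback of an ample class on $B$ -- is a sound way to fill that gap, with the key input being that for a proper morphism, ampleness on all fibers implies relative ampleness. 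One small inaccuracy worth flagging: the enlarged quotient $\tilde{X}=\C^n\sslash_\omega\KL$ is in fact smooth (repeating $D_i$ does not break condition (2) of Definition \ref{def:toric}), so you need not hedge with ``possibly singular.''
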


\begin{proof}
We only prove (2).
From the assumption, we have the exact sequences
\[
0\to V_i\to\cO^{\oplus s_i}\to\cQ_i\to0
\]
for $i=1,\dots,N$.
By taking the direct sum of these sequences, we have the inclusion $\bigoplus^N_{i=1}V_i\hookrightarrow\bigoplus^N_{i=1}\cO^{s_i}$.
This induces a closed embedding $\XLV\hookrightarrow B\times\X^{\vecs}$.
Since $B\times\X^{\vecs}$ is semi-projective, $\XLV$ is also semi-projective.
\end{proof}

\subsection{Cohomology ring}
\label{subsec:coh}
We want to describe the ordinary $\TL$-equivariant cohomology of $\XLV$.
We write the element of $H^2_\TL(\pt) \cong \Hom(\TL,\C^\times)$ corresponding to the $i$-th projection $\TL\to\C^\times$ as $-\lambda_i$. 

For $1\leq i\leq N$, let $L_i$ be a $\TL$-equivariant line bundle over $\XLV$ defined by
\begin{equation}
\label{eqn:L_i}
L_i = \left. \cU_\sfL(\vecV)\times\C\right/ \KL
\end{equation}
where $\K$ acts on the second factor $\C$ by the character $D_i\colon\K\to\C^\times$. 
We let $\TL$ act on $L_i$ as $t\cdot[v,w] = [t\cdot v,t_iw]$.
We write the $\TL$-equivariant first Chern class of $L_i$ as $u_i$.

\begin{proposition}
\label{prop:cohomology}
We have the isomorphisms
\begin{align*}
H^*_\TL(\XLV,\Z) &\cong H^*_\TL(B,\Z)[u_1,\dots,u_N]/(\cI+\cJ),	\\
H^*(\XLV,\Z) &\cong H^*(B,\Z)[u_1,\dots,u_N]/(\cI|_{\lambda=0}+\cJ|_{\lambda=0})
\end{align*}
where the ideal $\cI$ is generated by $\{\prod_{i\notin I} e_\TL(V_i\otimes L_i) \}_{I\notin\cA_\sfL}$, and the ideal $\cJ$ is generated by $\{\sum^N_{i=1}a_i(u_i+\lambda_i)\}_{a\in\Ker(D\colon\Z^N\to\LL^\vee)}$.
Here $\T$ acts on $B$ trivially and $H^*_\TL(B,\Z)\cong H^*(B,\Z)\otimes\Z[\lambda]$.
\end{proposition}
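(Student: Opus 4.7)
My plan is to mimic the classical Jurkiewicz--Danilov presentation of the cohomology ring of a smooth toric variety, adapted to the relative setting over $B$ via equivariant Leray--Hirsch. I will focus on the $\TL$-equivariant statement, since the non-equivariant one will follow by setting $\lambda=0$.

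First I will construct a ring homomorphism
\[
\Phi\colon H^*_\TL(B,\Z)[u_1,\dots,u_N]\to H^*_\TL(\XLV,\Z)
\]
sending $u_i$ to $c_1^\TL(L_i)$ and fixing $H^*_\TL(B,\Z)$ via pullback along $\pi\colon\XLV\to B$, and then verify that it annihilates $\cI+\cJ$. For the relations in $\cJ$: if $a\in\Ker(D)$ then $\bigotimes_i L_i^{\otimes a_i}$ has total $\KL$-weight $\sum_i a_iD_i=0$, so it descends to a $\TL$-equivariantly trivial line bundle on $\XLV$ of total $\TL$-weight $-\sum_i a_i\lambda_i$; taking first Chern classes yields $\sum_i a_i(u_i+\lambda_i)=0$. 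For the relations in $\cI$: the $i$-th coordinate projection $\cU_\sfL(\vecV)\to V_i$ is $\KL$-equivariant of weight $D_i$, and after twisting by $L_i$ to absorb this weight it descends to a global $\TL$-invariant section $s_i$ of $\pi^*V_i\otimes L_i$ on $\XLV$, with zero locus equal to the image of $\cU_\sfL(\vecV)\cap\bigoplus_{j\neq i}V_j$. For $I\notin\cA_\sfL$ the direct-sum section $\bigoplus_{i\notin I}s_i$ of $\bigoplus_{i\notin I}(V_i\otimes L_i)$ is nowhere vanishing, since its common zero is the image of $\cU_\sfL(\vecV)\cap\bigoplus_{i\in I}V_i=\emptyset$; hence its top Chern class $\prod_{i\notin I}e_\TL(V_i\otimes L_i)$ vanishes.

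Next I will show that $\Phi$ factors through $\cI+\cJ$ and induces an isomorphism by applying equivariant Leray--Hirsch to the Zariski-locally trivial bundle $\pi\colon\XLV\to B$ with fiber $\X^\vecr$. Restricting to a fiber, each $L_i$ specializes to the standard tautological line bundle on $\X^\vecr$ (obtained by setting all $V_j$ to trivial bundles over a point), so the $u_i$ restrict to the $\TL$-equivariant toric divisor classes. By the equivariant Jurkiewicz--Danilov theorem applied to $\X^\vecr$, $H^*_\TL(\X^\vecr,\Z)$ is a free $H^*_\TL(\pt,\Z)$-module with basis given by an explicit set of monomials in $u_1,\dots,u_N$, and is presented precisely as the specialization of the target quotient to $B=\pt$. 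Equivariant Leray--Hirsch then shows that $H^*_\TL(\XLV,\Z)$ is a free $H^*_\TL(B,\Z)$-module spanned by the same monomials, giving surjectivity of the induced map. Injectivity follows from a rank count, since reducing the source modulo the augmentation ideal of $H^*_\TL(B,\Z)$ over $H^*_\TL(\pt,\Z)$ recovers the same Jurkiewicz--Danilov presentation.

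The main step that requires genuine care is the identification of the tautological coordinate functions with global sections of $\pi^*V_i\otimes L_i$, which hinges on a careful bookkeeping of the $\KL$- and $\TL$-weights under descent along the free quotient $\cU_\sfL(\vecV)\to\XLV$. Once that identification is in place, the vanishing $\prod_{i\notin I}e_\TL(V_i\otimes L_i)=0$ is immediate from the defining condition of $\cU_\sfL(\vecV)$, and the Leray--Hirsch step reduces the global statement to the classical (equivariant) Jurkiewicz--Danilov theorem on the fiber $\X^\vecr$.
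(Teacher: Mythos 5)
Your proof is correct and follows essentially the same route as the paper's: construct the ring map sending $u_i$ to $c_1^\TL(L_i)$, kill $\cI$ via the tautological nowhere-vanishing section and $\cJ$ via the weight bookkeeping, then identify domain and target as free $H^*_\TL(B,\Z)$-modules on the Jurkiewicz--Danilov monomial basis and conclude by Leray--Hirsch. The only cosmetic difference is that the paper makes the equivariant Leray--Hirsch step explicit by passing to the Borel fibration $\X^\vecr\to(\XLV\times E\T)/\T\to B\times B\T$, whereas you invoke ``equivariant Leray--Hirsch'' for $\pi\colon\XLV\to B$ directly; these are the same argument.
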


\begin{proof}
We first note that this proposition is known to be true when $B$ is a point; see for example \cite[Proposition 2.11]{HS:toric}.
Moreover, it is known that $H^*_\TL(\X^\vecr,\Z)$ is a free $H^*_\TL(\pt,\Z)$-module of finite rank.
We can choose a $H^*_\TL(\pt)$-basis $\{ \phi_b(u) \}_{b\in\cB}$ of $H^*_\TL(\X^\vecr,\Z)$ where $\phi_b(u)\in\Z[u_1,\dots,u_N]$.

There is a natural ring homomorphism $f\colon H^*_\TL(B,\Z)[u_1,\dots,u_N]\to H^*_\TL(\XLV,\Z)$. 
For any $I\notin\cA_\sfL$, the vector bundle 
\[
\bigoplus_{i\notin I}(V_i\otimes L_i) = \left. \left( \cU_\sfL(\vecV)\times_B\bigoplus_{i\notin I}V_i \right) \right/ \KL
\]
has the nowhere vanishing $\TL$-equivariant section $[v_1,\dots,v_N]\mapsto[v_1,\dots,v_N,\{v_i\}_{i\notin I}]$, which implies $\cI\subset\ker(f)$.
In a similar way, we can see that $\cJ\subset\ker(f)$.
Therefore, we have a ring homomorphism
\[
\tf\colon H^*_\TL(B,\Z)[u_1,\dots,u_N]/(\cI+\cJ)\to H^*_\TL(\XLV,\Z).
\]
It is easy to show that the domain of $\tf$ is a free $H^*_\TL(B,\Z)$-module with basis $\{ \phi_b(u) \}_{b\in\cB}$.

We consider the following fiber bundle:
\[
\xymatrix{
\X^\vecr\ar[r]	&	\left.\left( \XLV\times E\T \right) \right/ \T\ar[d]	\\
			&	B\times B\T
}
\]
where $E\T$ is a contractible space on which $\T$ acts freely, $B\T=E\T/\T$ is a classifying space of $\T$, and the vertical map is induced by the projections $\XLV\to B$ and $E\T\to B\T$.
By the Leray-Hirsch theorem, 
\[
H^*((\XLV\times E\TL)/\TL,\Z) = H^*_\TL(\XLV,\Z)
\]
is a free $H^*_\TL(B,\Z)$-module with basis $\{ \tf(\phi_b(u)) \}_{b\in\cB}$.
This implies that $\tf$ is an isomorphism.
\end{proof}

This proposition provides an explicit description of the second cohomology of $\XLV$.
For $\vecr\in(\Z_{>0})^N$, we set
\[
I^\vecr_\sfL:=\{i\in\{1,\dots,N\}\colon r_i=1\text{ and }\{1,\dots,N\}\setminus\{i\}\notin\cA_\sfL\}.
\]
It follows easily from Proposition \ref{prop:cohomology} that there exists a natural exact sequence
\begin{equation}
\label{eqn:seq_for_H^2}
0	\to	\Z^{\oplus I^\vecr_\sfL}	\to	H^2(B,\Z)\oplus\LL^\vee	\to	H^2(\XLV,\Z)	\to	0
\end{equation}
where the second arrow sends a standard generator $e_i\ (i\in I^\vecr_\sfL)$ to $(c_1(V_i),D_i)$.
Hence we have the isomorphism
\[
H^2(\XLV,\Z) \cong H^2(B,\Z)\oplus\LL^\vee/\langle(c_1(V_i),D_i)\colon i\in I^\vecr_\sfL\rangle.
\]
In particular, we have
\[
H^2(\X^\vecr,\Z) \cong \LL^\vee/\langle D_i \colon i\in I^\vecr_\sfL \rangle.
\]

\subsection{Effective curve classes}
\label{subsec:effective}
In this subsection, we will study the effective curve classes $\Eff(\XLV)$ and introduce \emph{extended effective classes} $\extEff(\XLV)$.

Taking the dual of the sequence \eqref{eqn:seq_for_H^2} we can see that
\[
H_2(\XLV,\Z) \cong \left\{ (D,\ell)\in H_2(B,\Z)\oplus\LL: c_1(V_i)\cdot D + D_i(\ell)=0\ \textrm{ for }\ 1\leq i\leq N \right\}.
\]
We construct a canonical splitting of the second homology of $\XLV$ into those of $B$ and $\X^\vecr$ (which depends on the choice of $\vecV$).
There exists a canonical splitting \cite[Section 3.1.2]{Iritani:integral}:
\begin{align}
\label{eqn:toric_splitting1}
H_2(\X^\vecr,\Z)\oplus\Z^{\oplus I^\vecr_\sfL} &\cong \LL,	\\
\label{eqn:toric_splitting2}
H^2(\X^\vecr,\Z)\oplus\Z^{\oplus I^\vecr_\sfL} &\cong \LL^\vee.
\end{align}
We define $\phi\colon H^2(\X^\vecr)\to H^2(\XLV)$ as follows.
For $\overline{\rho}\in H^2(\X^\vecr)$, we take its lift $\rho\in\LL^\vee$ corresponding to $(\overline{\rho},0)$ under the splitting \eqref{eqn:toric_splitting2}. 
We define $\phi(\overline{\rho})$ to be the first Chern class of the line bundle $\cO(\rho)$ defined as
\[
\left.\cO(\rho) = \cU_\sfL(\vecV)\times\C\right/\KL
\]
where $\KL$ acts on the second factor via the character $\KL\to\C^\times$ obtained by $\rho$.
This map is well-defined and gives the splitting $H^2(\XLV) \cong H^2(B)\oplus H^2(\X^\vecr)$.
By dualizing, we obtain the splitting 
\begin{equation}
\label{eqn:splitting0}
H_2(\XLV,\Z) \cong H_2(B,\Z)\oplus H_2(\X^\vecr,\Z).
\end{equation}
We define $\LL_\eff\subset\LL$ to be a cone which coincides with $\Eff(\X^\vecr)\oplus(\Z_{\geq0})^{\oplus I^\vecr_\sfL}$ via the above splitting.
Note that $\LL_\eff$ is independent of $\vecr$ and can be written as 
\[
\LLeff = \sum_{I\in\cA_\sfL} \{ \ell\in\LL: D_i(\ell)\geq0 \text{\ \ for all \ } i\in I \}.
\]
See \cite[Section 3.1.2]{Iritani:integral} for details.

Combining the two splittings \eqref{eqn:toric_splitting1} and \eqref{eqn:splitting0}, we obtain the isomorphism
\begin{equation}
\label{eqn:splitting}
H_2(\XLV,\Z)\oplus\Z^{I^\vecr_\sfL} \cong H_2(B,\Z) \oplus \LL.
\end{equation}

We henceforce assume that $\bigoplus_{i=1}^NV_i$ is globally generated.
We let $\cD\colon H_2(B,\Z)\oplus\LL\to H_2(\XLV,\Z)$ be the isomorphism \eqref{eqn:splitting} composed with the projection to $H_2(\XLV,\Z)$, and define a \emph{semigroup of extended effective curve classes} as follows:
\[
\extEff(\XLV) = \cD(\Eff(B)\oplus\LL_\eff). 
\]
As we can see from the following lemma, the cone $\extEff(\XLV)$ indeed extends $\Eff(\XLV)$.

\begin{lemma}
\label{lem:extEff}
$\extEff(\XLV) \supset \Eff(\XLV).$
\end{lemma}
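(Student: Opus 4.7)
The plan is to reduce the inclusion $\Eff(\XLV) \subset \extEff(\XLV)$ to an effectivity statement for the fiber component of each effective class. Under the splittings \eqref{eqn:toric_splitting1} and \eqref{eqn:splitting0}, every $d \in H_2(\XLV)$ decomposes as $d = (d_B, d_F) \in H_2(B) \oplus H_2(\X^\vecr)$. Setting $\ell = (d_F, 0) \in H_2(\X^\vecr) \oplus \Z^{I^\vecr_\sfL} = \LL$ gives $\cD(d_B, \ell) = d$, so membership in $\extEff(\XLV) = \cD(\Eff(B) \oplus \LL_\eff)$ follows from $d_B \in \Eff(B)$ together with $(d_F, 0) \in \LL_\eff = \Eff(\X^\vecr) \oplus (\Z_{\geq 0})^{I^\vecr_\sfL}$, i.e., from $d_F \in \Eff(\X^\vecr)$. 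Since $d_B = \pi_* d \in \Eff(B)$ holds automatically for $d \in \Eff(\XLV)$, the lemma reduces to showing that the fiber part $d_F$ of every effective class lies in $\Eff(\X^\vecr)$.

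I would then verify this for each irreducible curve $C \subset \XLV$, as such classes generate $\Eff(\XLV)$ as a semigroup. When $\pi(C)$ is a point, $C$ lies in a single fiber $F_b \cong \X^\vecr$ and $d_F = [C] \in \Eff(\X^\vecr)$ is immediate. In the remaining case $\pi(C) = C_B$ is a curve in $B$, I would invoke the fact that, for a smooth projective toric variety $\X^\vecr$, the effective cone $\Eff(\X^\vecr)$ is rational polyhedral and equals the dual of the nef cone $\operatorname{Nef}(\X^\vecr)$; hence $d_F \in \Eff(\X^\vecr)$ is equivalent to $\bar A \cdot d_F \geq 0$ for every class $\bar A$ in a generating set of $\operatorname{Nef}(\X^\vecr)$. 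By compatibility of the splittings with the map $\phi$, one has $\bar A \cdot d_F = \phi(\bar A) \cdot [C]$ computed on $\XLV$, so the task becomes verifying $\phi(\bar A) \cdot [C] \geq 0$ for each nef generator $\bar A$.

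The hard part will be this positivity step. I would exploit the hypothesis that $V = \bigoplus V_i$ is generated by global sections: for each $i$ with $\{1,\dots,N\}\setminus\{i\} \in \cA_\sfL$, the tautological section of $\pi^* V_i \otimes L_i$ exhibits an effective subscheme $E_i$ of codimension $r_i$ encoding the class $c_1(\pi^* V_i \otimes L_i)$. Intersecting $[C]$ with $E_i$ (or with the appropriate top Chern class in the higher-rank case), combined with the non-negativity $c_1(V_i) \cdot d_B \geq 0$ coming from global generation of $V_i$, yields the required positivity for the nef generators $\phi(\bar u_i)$ whenever $C$ is not contained in the corresponding boundary. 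A separate case-by-case argument, proceeding by induction on the combinatorial depth at which $C$ lies in such boundaries, handles the remaining cases.
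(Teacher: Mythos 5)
Your reduction in the first paragraph is sound: under the splittings, every class $d$ decomposes as $(d_B, d_F)$ with $d_B = \pi_*d$, so effectivity of $d$ would follow from $d_B\in\Eff(B)$ (automatic) together with $d_F \in \Eff(\X^\vecr)$. However, the execution of that remaining step has several real gaps. First, the standing hypothesis in this section is that the \emph{duals} $V_i^\vee$ are generated by global sections (that is what Proposition \ref{prop:semi_proj} requires), not the $V_i$ themselves; the inequality $c_1(V_i)\cdot d_B\geq 0$ you invoke therefore goes the wrong way. Second, $\X^\vecr$ is only semi-projective in general, so you cannot simply quote the duality $\Eff(\X^\vecr) = \operatorname{Nef}(\X^\vecr)^\vee$ as stated; one would instead have to argue via the combinatorial description of $\LL_\eff$ given in the text. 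Third, the intersection-theoretic step is dimensionally inconsistent: the zero locus of a section of $\pi^*V_i\otimes L_i$ has codimension $r_i$ and cannot be intersected against the curve class $[C]$ when $r_i>1$; what you actually need is that $\phi(\bar A)$ is a nef divisor on $\XLV$ for each nef generator $\bar A$ of $\X^\vecr$, and your sketch does not establish that. Finally, the concluding ``induction on combinatorial depth'' is where the real content would lie, and it is only asserted.

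The paper's proof bypasses all of this with the embedding already built in Proposition \ref{prop:semi_proj}: global generation of the $V_i^\vee$ gives a closed embedding $\iota\colon\XLV\hookrightarrow B\times\X^\vecs$ with each $s_i\geq 2$, so that $I^\vecs_\sfL=\emptyset$, hence $H_2(\X^\vecs,\Z)\cong\LL$ and $\Eff(\X^\vecs)=\LL_\eff$. Since each $\cO(\rho)$ on $\XLV$ is the pullback under $\iota$ of the corresponding $\cO(\rho)$ on $B\times\X^\vecs$, the pushforward $\iota_*$ coincides with the inclusion $H_2(\XLV,\Z)\hookrightarrow H_2(B,\Z)\oplus\LL$ induced by the splitting \eqref{eqn:splitting}. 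As $\iota_*$ carries effective classes to effective classes, $\iota_*\Eff(\XLV)\subset\Eff(B\times\X^\vecs)=\Eff(B)\oplus\LL_\eff$, and applying $\cD$ (which is a one-sided inverse to this inclusion) gives $\Eff(\XLV)\subset\extEff(\XLV)$. In effect, the paper proves precisely the nefness/effectivity you need, but indirectly and uniformly, by pulling back from the product $B\times\X^\vecs$ rather than checking generators of the nef cone of $\X^\vecr$ one at a time.
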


\begin{proof}
From the assumption that $\bigoplus_{i=1}^NV_i$ is globally generated, we can construct the embedding $\iota\colon\XLV\hookrightarrow B\times\X^\vecs$ for some $\vecs\in(\Z_{\geq2})^N$; see the proof of Proposition \ref{prop:semi_proj}.
We note that $H_2(\X^\vecs,\Z) \cong \LL$ since $I^\vecs_\sfL=\emptyset$. 
For any $\rho\in\LL^\vee$, the line bundle $\cO(\rho)$ comes from $\cO(\rho)$ over $B\times\X^\vecs$ via the embedding $\iota$.
Therefore, the map 
\[
\iota_*\colon H_2(\XLV,\Z)\to H_2(B\times\X^\vecs,\Z) = H_2(B,\Z)\oplus H_2(\X^\vecs,\Z)
\]
coincides with the inclusion $H_2(\XLV,\Z)\hookrightarrow H_2(B,\Z)\oplus\LL$ induced by the splitting \eqref{eqn:splitting}.
This gives the inclusion 
\[
\Eff(\XLV)\xrightarrow{\iota_*}\Eff(B\times\X^\vecs)=\Eff(B)\oplus\LL_\eff\xrightarrow{\cD}\extEff(\XLV)\hookrightarrow H_2(\XLV,\Z).
\] 
\end{proof}

\begin{remark}
The splittings \eqref{eqn:splitting0} and \eqref{eqn:splitting} depend on the choice of vector bundles $V_1,\dots,V_N$.
When replacing $\vecV$ by $\vecV'=(V_1\otimes M^{\otimes D_1(\ell)},\dots,V_N\otimes M^{\otimes D_N(\ell)})$ for some $\ell\in\LL$ and line bundle $M\to B$, the total space $\XLV$ does not change.
However, for $\rho\in\LL^\vee$, the associated line bundles $\cO_{\XLV}(\rho)$ and $\cO_{\X_\sfL(\vecV')}(\rho)$ may not coincide.
In fact, we have
\[
\cO_{\X_\sfL(\vecV')}(\rho) = \cO_{\XLV}(\rho) \otimes M^{\otimes -\rho(\ell)}
\]
via the canonical identification of $\XLV$ and $\X_\sfL(\vecV')$. 
The splittings \eqref{eqn:splitting0}, \eqref{eqn:splitting} and the semigroup $\extEff(\XLV)$ change according to this substitution. 
\end{remark}

Via the splitting \eqref{eqn:splitting}, the cone $\Eff(B)\oplus\LL_\eff$ is identified with $\extEff(\XLV)\oplus(\Z_{\geq0})^{\oplus I^\vecr_\sfL}$.
We choose a K\"{a}hler form $\omega$ on $B\times\X^\vecs$.
Let $v$ be the additive valuation on $\C[\Eff(B)\oplus\LLeff] = \C[\extEff(\XLV)\oplus(\Z_{\geq0}^{\oplus I^\vecr_\sfL})]$ given by $\omega$, and consider the completion with respect to $v$.
Let $\cQ$, $Q$ and $q$ denote the Novikov variables for $\XLV$, $B$ and $\X^\vecr$ respectively, and $\tq$ (resp. $y$) be a formal variable for $\C[\![\LL_\eff]\!]$ (resp. $\C[\![\Z_{\geq0}^{\oplus I^\vecr_\sfL}]\!]$).
Furthermore, we can identify $\C[\![\Eff(B)\oplus\LL_\eff]\!]$ with $\C[\![\extEff(\XLV)]\!][\![y]\!]$ in the following way: 
\begin{equation}
\label{eqn:extEff}
Q^D \tq^\ell = \cQ^{\cD(D,\ell)} \prod_{i\in I^\vecr_\sfL} y_i^{D_i(\ell)}.
\end{equation}
We call $\tq$ the \emph{extended Novikov variable for $\X^\vecr$}.

\subsection{Fixed loci and one-dimensional orbits}
\label{subsec:fixed_locus}
We describe the $\T$-fixed loci on $\XLV$ and introduce some varieties related to one-dimensional orbits.

Let $\sfL$ be a smooth toric data.
We denote by $F_\sfL$ the set of minimal anti-cones:
\[
F_\sfL = \{ \alpha\in\cA_\sfL : |\alpha| = k \}.
\]
For $[x]\in(\X_\sfL^\vecr)^\T$, we take the set $\{ i : x_i\neq0 \}$, which belongs to $F_\sfL$. 
This gives a one-to-one correspondence between the connected components of $(\X_\sfL^\vecr)^\T$ and the set $F_\sfL$.

In fact, there is also a one-to-one correspondence between the connected components of $\XLV^\T$ and the set $F_\sfL$ obtained in the same manner.
More precisely, we assign to $\alpha\in F_\sfL$ a subvariety $\XLV_\alpha$ which is a bundle over $B$ with fiber being the $\T$-fixed locus of $\X_\sfL^\vecr$ corresponding to $\alpha\in F_\sfL$.
Note that $\XLV_\alpha$ is isomorphic to the fiber product over $B$ of the projective bundles $\{\PP(V_j)\}_{j\in\alpha}$.
We introduce several definitions and notations.

\begin{definition}
\label{def:fixed_locus}
Let $\alpha\in F_\sfL$.
\begin{itemize}
\item[$(1)$]
We write the inclusion $\XLV_\alpha\hookrightarrow\XLV$ as $\iota_\alpha$, and write as $N_\alpha$ the normal bundle to $\XLV_\alpha$ in $\XLV$:
\[
N_\alpha = \bigoplus_{i\notin\alpha} (V_i\otimes L_i).
\]
\item[$(2)$]
For $\alpha\in2^{\{1,\dots,N\}}$, we write a fiber product of $\{\PP(V_j)\to B\}_{j\in\beta}$ over $B$ as $\XLV_\alpha$.
(This convention is consistent with the above notation for the fixed loci.)
Here we define $\XLV_\alpha = B$ if $\alpha=\emptyset$.
If $\alpha\supset\beta$ there is a natural projection $\XLV_\alpha\to\XLV_\beta$ which is denoted by $p_\ab$.
\item[$(3)$]
We denote by $\{D_{\alpha,i}^\vee\}_{i\in\alpha}\subset\LL$ the dual basis of $\{D_i\}_{i\in\alpha}\subset\LL^\vee$.
\item[$(4)$]
We say that \emph{$\beta\in F_\sfL$ is adjacent to $\alpha$} if $\#(\beta\setminus\alpha)=1$.
We write the unique element of $\beta\setminus\alpha$ as $i_{\alpha,\beta}$.
Define
\[
\adj(\alpha) = \{ \beta\in F_\sfL \colon \beta \text{ is adjacent to } \alpha \}.
\]
\item[$(5)$]
Let $\beta\in\adj(\alpha)$.
We denote by $\dab$ the homology class of $\XLV$ given by a one-dimensional orbit joining points on $\XLV_\alpha$ and $\XLV_\beta$.
\item[$(6)$]
For $\beta\in\adj(\alpha)$, we define 
\begin{equation}
\label{eqn:Lab}
L_{\alpha,\beta} := \left. \left( \bigoplus_{i\in\alpha\cup\beta} (V_i\setminus B) \oplus \cO_B \right) \right/ \left( \KL\times\C^\times \right)
\end{equation}
where $\TL$ acts trivially on $\cO_B$ and the second factor $t\in\C^\times$ acts on $(v,s)\in V_{i_{\alpha,\beta}}\oplus\cO_B$ as $t\cdot(v,s)=(tv,t^{-1}s)$ and acts trivially on the other components.
(As we will see, $L_\ab$ is a line bundle over $\XLV_\acb$.)
\item[$(7)$]
For $\beta\in\adj(\alpha)$, we define $\lambda_\ab\in H^2_\TL(\pt)$ to be the image of $c_1^\TL(L_\ab)$ under the canonical projection $H^2_\TL(\XLV_\acb) = H^2(\XLV_\acb)\otimes H^2_\TL(\pt) \to H^2_\TL(\pt)$.
\end{itemize}
\end{definition}

From the description of $H^*_\TL(\XLV)$ in Proposition \ref{prop:cohomology}, we can describe $H^*_\TL(\XLV_\alpha)$ and the map $\iota_\alpha^*\colon H^*_\TL(\XLV)\to H^*_\TL(\XLV_\alpha)$.

\begin{corollary}
\label{cor:coh}
For $\alpha\in2^{\{1,\dots,N\}}$, we have the isomorphism
\[
H^*_\TL(\XLV_\alpha) \cong H^*(B)[\{u_i\}_{i\in\alpha}]/\langle e_\TL(V_i\otimes L_i) \colon i\in\alpha \rangle.
\]
If $\alpha\in F_\sfL$, the map
\begin{multline*}
\iota_\alpha^*\colon H^*_\TL(B)[u_1,\dots,u_N]/(\cI+\cJ)\cong H^*_\TL(\XLV)	\\
\to H^*_\TL(\XLV_\alpha)\cong H^*_\TL(B)[\{u_i\}_{i\in\alpha}]/\langle e_\TL(V_i\otimes L_i) \colon i\in\alpha \rangle
\end{multline*}
is a $H^*_\TL(B)$-module homomorphism which sends $u_i$ to 
\[
\iota_\alpha^*u_i = -\lambda_i + \sum_{j\in\alpha} D_i(D_{\alpha,j}^\vee)\cdot(u_j+\lambda_j).
\] 
In particular, $\iota_\alpha^*u_i=u_i$ if $i\in\alpha$.
\end{corollary}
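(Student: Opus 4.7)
My approach is to apply Proposition \ref{prop:cohomology} to a sub-toric-data that realizes each $\XLV_\alpha$ as a toric bundle in its own right (giving Part 1), and to derive the pullback formula in Part 2 directly from the $\cJ$-relations already present in $H^*_\T(\XLV)$. For Part 1, I would identify $\XLV_\alpha = \prod^B_{j \in \alpha} \PP(V_j)$ with the toric bundle $\X_{\sfL_\alpha}((V_j)_{j\in\alpha})$ associated to the smooth toric data $\sfL_\alpha = (\Z^\alpha, \{D_j\}_{j\in\alpha}, \omega_\alpha)$, where the $D_j$ are the standard generators of $\Z^\alpha$ and $\omega_\alpha$ is taken in the interior of $\sum_{j\in\alpha}\R_{\geq0} D_j$. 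When $\alpha \in F_\sfL$, this is compatible with the fixed-locus description of Subsection \ref{subsec:fixed_locus} because $\{D_j\}_{j\in\alpha}$ already forms a basis of $\LL^\vee$, so restriction of line bundles agrees on the nose. This choice forces $\cA_{\sfL_\alpha} = \{\alpha\}$, so the relations in $\cI_{\sfL_\alpha}$ coming from single-index removals $I = \alpha \setminus \{j\}$ generate the ideal $\langle e_\T(V_j \otimes L_j) : j \in \alpha \rangle$; the kernel of $D_{\sfL_\alpha}\colon \Z^\alpha \to \Z^\alpha$ is trivial so $\cJ_{\sfL_\alpha} = 0$. Equivalently, this is an iterated application of the equivariant projective bundle formula over $B$.

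For Part 2, the map $\iota_\alpha^*$ is $H^*_\T(B)$-linear by functoriality of pullback over $B$. For $i \in \alpha$, the dual basis identity $D_i(D_{\alpha,j}^\vee) = \delta_{ij}$ collapses the stated formula to $\iota_\alpha^* u_i = u_i$, which matches the obvious restriction of line bundles. For $i \notin \alpha$, the basis property $D_i = \sum_{j\in\alpha} D_i(D_{\alpha,j}^\vee) D_j$ places $e_i - \sum_{j\in\alpha} D_i(D_{\alpha,j}^\vee) e_j$ in $\ker(D\colon \Z^N \to \LL^\vee)$, so the corresponding $\cJ$-generator
\[
(u_i + \lambda_i) - \sum_{j\in\alpha} D_i(D_{\alpha,j}^\vee)(u_j + \lambda_j) = 0
\]
already holds in $H^*_\T(\XLV)$; its restriction under $\iota_\alpha^*$ is precisely the claimed identity.

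The main (mild) obstacle is tracking the $\T$-equivariant structures with care: one must verify that for $i \in \alpha$ the bundle $V_i \otimes L_i$ (in the paper's convention, the descent $(\cU_\sfL(\vecV) \times_B V_i)/\K$) restricts to a bundle with trivial $\T$-character on $\XLV_\alpha$, so that $e_\T(V_i \otimes L_i)$ really does reduce to the ordinary projective bundle relation governing the $j$-th factor of the fiber product. This follows from a direct character computation at a $\T$-fixed point using the unique $\sigma(t)\in\K$ determined by $D_j(\sigma(t))=t_j$ for $j \in \alpha$, which gives character $D_i(\sigma(t))^{-1} t_i$ on $V_i \otimes L_i$; this is trivial when $i \in \alpha$ and equals $-\lambda_i + \sum_{j\in\alpha} D_i(D_{\alpha,j}^\vee)\lambda_j$ when $i \notin \alpha$, consistent with the statement.
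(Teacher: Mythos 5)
Your proposal is correct, and since the paper records this as an unproved corollary of Proposition \ref{prop:cohomology}, your argument fills in exactly what the text leaves implicit. A few observations on the route you chose and one small point of care.

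For the first isomorphism, routing through the sub-toric data $\sfL_\alpha = (\Z^\alpha, \mathrm{id}, \omega_\alpha)$ is a clean way to invoke Proposition \ref{prop:cohomology} again: with $\cA_{\sfL_\alpha} = \{\alpha\}$, the minimal missing anti-cones $\alpha\setminus\{j\}$ give exactly the generators $e_{\T}(V_j\otimes L_j)$ and, since $D_\alpha$ is injective, $\cJ_{\sfL_\alpha}=0$. As you note, this is precisely the iterated $\T$-equivariant projective bundle formula in disguise, so the detour through the toric-bundle formalism buys nothing beyond consistency of notation — but it is valid, and it matches how the paper clearly intends the corollary to follow.

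For the second part, the key observation is yours: the $\cJ$-relation for $a = e_i - \sum_{j\in\alpha} D_i(D_{\alpha,j}^\vee)\, e_j \in \ker(D)$ already equates $u_i+\lambda_i$ with $\sum_{j\in\alpha} D_i(D_{\alpha,j}^\vee)(u_j+\lambda_j)$ in $H^*_\T(\XLV)$, and $\iota_\alpha^*$ transports this relation because it is a $H^*_\T(B)$-algebra map with $\iota_\alpha^*u_j = u_j$ for $j\in\alpha$. The only non-formal input is exactly the bridge you identify: for $\alpha\in F_\sfL$, $\{D_j\}_{j\in\alpha}$ is a basis of $\LL^\vee$, so $\K\to\K_\alpha$, $k\mapsto (D_j(k))_{j\in\alpha}$, is an isomorphism under which $\iota_\alpha^*L_j$ is identified with the $L_j$ of $\sfL_\alpha$, pinning down $\iota_\alpha^*u_j = u_j$. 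Your closing character computation with $D_j(\sigma(t)) = t_j$ and the paper's sign convention ($t_i\leftrightarrow -\lambda_i$) is the right way to confirm the $\lambda$-part of the formula; you could alternatively note that $c_1(L_i|_{\XLV_\alpha}) = \sum_{j\in\alpha}D_i(D_{\alpha,j}^\vee)\,u_j$ for $i\notin\alpha$ and read off the full formula without the $\cJ$-relation, but both derivations are equivalent and the one you gave is shorter. One minor clarity point: the corollary's formula holds for all $i$, and for $i\in\alpha$ it is verified by the bridge rather than deduced from a $\cJ$-generator, so it is worth stating explicitly that the $i\in\alpha$ case is established first and is then fed into the $\cJ$-argument for $i\notin\alpha$, which you do in effect but without flagging the logical order.
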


By abuse of notation, for any $\alpha\in2^{\{1,\dots,N\}}$ and $j\in\alpha$, we denote the equivalence class of $u_j$ in $H^*_\TL(\XLV_\alpha)$ by $u_j$.
We remark that for any $\alpha\in F_\sfL$, $\beta\in\adj(\alpha)$ and $i\in(\alpha\cap\beta)^c$, two pullbacks $\paba^*\iota_\alpha^*u_i$ and $\pabb^*\iota_\beta^*u_i$ do not coincide; see in Lemma \ref{lem:dab_comparison} (3).

Let $\alpha\in F_\sfL$.
For $\beta\in F_\sfL$, the condition that $\beta$ is adjacent to $\alpha$ is equivalent to the existence of a one-dimensional orbit joining points on $\XLV_\alpha$ and $\XLV_\beta$.
If $\beta\in\adj(\alpha)$, there is a canonical isomorphism 
\[
\XLV_{\alpha\cup\beta} \cong \left. \bigoplus_{i\in\alpha\cup\beta} (V_i\setminus B) \right/ (\KL\times\C^\times),
\]
and a projection $L_{\alpha,\beta}\to\XLV_{\alpha\cup\beta}$ which makes $L_{\alpha,\beta}$ a line bundle over $\XLV_{\alpha\cup\beta}$.
We give the description of $\dab$ and $c_1^\T(L_\ab)$.

\begin{lemma}
\label{lem:dab_comparison}
Let $\alpha\in F_\sfL$ and $\beta\in\adj(\alpha)$.
\begin{itemize}
\item[$(1)$]
Under the splitting \eqref{eqn:splitting}, it holds that $\dab = D_{\alpha,i_{\beta,\alpha}}^\vee = D_{\beta,i_\ab}^\vee$.
\item[$(2)$]
It holds that
\begin{align*}
c_1^\TL(L_\ab) 
&= -u_{i_\ab} - \lambda_{i_\ab} + \sum_{j\in\alpha} D_{i_\ab}(D_{\alpha,j}^\vee)\cdot(u_j+\lambda_j)	\\
&= -u_{i_\ab} + \paba^*\iota_\alpha^*u_{i_\ab}.
\end{align*}
\item[$(3)$]
For $1\leq i\leq N$, we have
\[
\pabb^*\iota_\beta^*u_i = \paba^*\iota_\alpha^*u_i - D_i(\dab)\cdot c_1^\TL(L_\ab).
\]
\end{itemize}
\end{lemma}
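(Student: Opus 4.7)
The plan is to prove the three parts in order; part (3) will reduce to (1) and (2) via linear algebra in $\LL$, so the real content is in the first two parts.

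For (1), the one-dimensional orbit $C_\ab$ representing $\dab$ lies in a single fiber of $\XLV\to B$, so under the splittings \eqref{eqn:splitting0} and \eqref{eqn:splitting} its class sits in the $\LL$-summand. To identify it as an element of $\LL$, I pair with divisor classes $\rho\in\LL^\vee$ via the line bundles $\cO(\rho)$: the pairing $\int_{C_\ab}c_1(\cO(\rho))$ equals $\rho$ evaluated on the corresponding element of $\LL$. Geometrically, $C_\ab$ is a $\T$-invariant $\PP^1$ in the fiber toric variety joining $P_\alpha$ to $P_\beta$, obtained by varying the $i_{\beta,\alpha}$- and $i_{\alpha,\beta}$-coordinates while keeping the rest fixed. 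It meets $\{z_{i_{\beta,\alpha}}=0\}$ transversely at $P_\beta$, meets $\{z_{i_{\alpha,\beta}}=0\}$ transversely at $P_\alpha$, and is disjoint from $\{z_j=0\}$ for $j\in\alpha\cap\beta$. These intersection numbers give $D_{i_{\beta,\alpha}}(\dab)=D_{i_{\alpha,\beta}}(\dab)=1$ and $D_j(\dab)=0$ for $j\in\alpha\cap\beta$. Since both $\{D_j\}_{j\in\alpha}$ and $\{D_j\}_{j\in\beta}$ are bases of $\LL^\vee$, these constraints identify $\dab$ as $D_{\alpha,i_{\beta,\alpha}}^\vee$ and as $D_{\beta,i_{\alpha,\beta}}^\vee$ respectively.

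For (2), I will compute $c_1^\T(L_\ab)$ directly from the quotient description \eqref{eqn:Lab}. Since $\T$ acts trivially on $\XLV_\acb$, the equivariant Chern class decomposes into a non-equivariant part in $H^2(\XLV_\acb)$ and a character part in $H^2_\T(\pt)$. The character part is read off as the $\T$-weight on the $\cO_B$-fiber at a normalized representative in $Y\times\cO_B$ (with $Y=\bigoplus_{i\in\acb}(V_i\setminus B)$): after using the $\K$-action to scale the $V_j$-components for $j\in\alpha$ to units, the $\T$-action must be compensated by a unique element of $\K\times\C^\times$, whose action on the $\cO_B$-factor yields the weight $-\lambda_{i_{\alpha,\beta}}+\sum_{j\in\alpha}D_{i_{\alpha,\beta}}(D_{\alpha,j}^\vee)\lambda_j$. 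The non-equivariant part is computed by identifying the character of $\K\times\C^\times$ defining $L_\ab$ with a combination of the tautological line bundles $\cO_{\PP(V_j)}(1)$ ($j\in\acb$) of the fiber-product factors of $\XLV_\acb$, yielding $c_1(L_\ab)=-u_{i_{\alpha,\beta}}+\sum_{j\in\alpha}D_{i_{\alpha,\beta}}(D_{\alpha,j}^\vee)u_j$. Together these give the first equality of (2); the second equality is just Corollary \ref{cor:coh} for $\iota_\alpha^*u_{i_{\alpha,\beta}}$.

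For (3), I apply Corollary \ref{cor:coh} to $\pabb^*\iota_\beta^*u_i$, rewrite $u_{i_{\alpha,\beta}}$ on $\XLV_\acb$ using (2) as $\paba^*\iota_\alpha^*u_{i_{\alpha,\beta}}-c_1^\T(L_\ab)$, and substitute $D_{\beta,i_{\alpha,\beta}}^\vee=\dab$ from (1). After matching coefficients of the basis elements $(u_j+\lambda_j)_{j\in\alpha}$, the desired identity reduces to
\[
D_{\alpha,j}^\vee-D_{\beta,j}^\vee = D_{i_{\alpha,\beta}}(D_{\alpha,j}^\vee)\cdot\dab \qquad (j\in\alpha\cap\beta)
\]
in $\LL$. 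Both sides annihilate $\{D_k\}_{k\in\alpha\cap\beta}$ and therefore lie in the rank-one sublattice $\Z\cdot\dab$ (primitive by part (1)); evaluating against $D_{i_{\alpha,\beta}}$ and using $D_{i_{\alpha,\beta}}(\dab)=1$ fixes the coefficient and closes the argument.

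The main obstacle is part (2): the $\T$-equivariant first Chern class of $L_\ab$ must be extracted from a GIT quotient by $\K\times\C^\times$ in which the extra $\C^\times$ is \emph{not} a subgroup of $\T$, so the character and base components of $c_1^\T(L_\ab)$ must be tracked separately through the quotient. Once (2) is in hand, (1) is geometric bookkeeping and (3) is the purely combinatorial identity above.
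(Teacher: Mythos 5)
Your proof is correct and follows the same overall strategy as the paper: reduce part (1) to a single fiber and identify $\dab$ by pairing with divisors, compute $c_1^\TL(L_\ab)$ in part (2) by unwinding the $(\K\times\C^\times)$-quotient defining $L_\ab$ (the paper phrases this as the identification $L_\ab = L_{i_\ab}^\vee\otimes\bigotimes_{j\in\alpha}L_j^{\otimes D_{\alpha,j}^\vee(D_{i_\ab})}$; your normalization argument is the underlying mechanism), and deduce (3) from Corollary~\ref{cor:coh}. The one place you diverge is the final combinatorial identity in (3): the paper derives the change-of-dual-basis formula $D_{\beta,j}^\vee = D_{\alpha,j}^\vee - D_{i_\ab}(D_{\alpha,j}^\vee)\cdot D_{\alpha,i_{\beta,\alpha}}^\vee$ directly from $D_{i_{\beta,\alpha}} = D_{i_\ab}-\sum_{j\in\alpha\cap\beta}D_{i_\ab}(D_{\alpha,j}^\vee)D_j$ and substitutes, whereas you observe that both sides of $D_{\alpha,j}^\vee-D_{\beta,j}^\vee = D_{i_\ab}(D_{\alpha,j}^\vee)\dab$ kill $\{D_k\}_{k\in\alpha\cap\beta}$, hence live in $\Z\cdot\dab$, and then match a single coefficient against $D_{i_\ab}$. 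This is a clean shortcut that avoids writing the dual basis in the other chart explicitly, though it buys no extra generality. Both routes are elementary and of comparable length; yours is arguably a bit tidier for (3), and I have verified the perpendicularity and primitivity claims you invoke (primitivity following from $D_{i_\ab}(\dab)=1$).
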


\begin{proof}
We first prove $(1)$.
Since $\dab$ is represented by a curve $C_\ab$ in the fiber, we can assume that $B=\pt$ and $\XLV=\X^\vecr$ is a toric variety.
Without loss of generality, we can assume that $r_i=1$ for any $i$.
In this case, we can write $X:=\X^\vecr$ and $C_\ab$ as follows:
\begin{align*}
X &= \left\{ [v_1,\dots,v_N]\in\C^N/\K \colon (v_1,\dots,v_N)\notin\bigcup_{I\notin\cA_\sfL}\C^I  \right\},	\\
C_\ab &= \{ [v_1,\dots,v_N]\in X \colon v_i=1 \text{ for } i\in\alpha\cap\beta,\ v_j=0 \text{ for } i\notin\alpha\cup\beta \}.
\end{align*}
By definition, $u_i$ is the $i$-th toric divisor for $X$.
Hence its Poincar\'{e} dual $[Z_i]$ can be taken as 
\[
Z_i = \{ [v_1,\dots,v_N]\in X \colon v_i=0 \}.
\]
Using these descriptions, we can show that 
\[
u_i(\dab) = D_i(\dab) =
\begin{cases}
0	&\text{if } i\in\alpha\cap\beta,		\\
1	&\text{if } i=i_\ab, i_{\beta,\alpha}.	\\
\end{cases}
\]
for $i\in\acb$.
This implies that $\dab = D_{\alpha,i_{\beta,\alpha}}^\vee = D_{\beta,i_\ab}^\vee$.

By \eqref{eqn:Lab}, it follows that 
\[
L_\ab = L_{i_\ab}^\vee \otimes \bigotimes_{j\in\alpha} L_j^{\otimes D_{\alpha,j}^\vee(D_{i_\ab})}
\]
as $\TL$-equivariant line bundles where $L_i\to \XLV$ is the line bundle \eqref{eqn:L_i}.
By taking the first Chern class, we obtain the desired formula.

For $1\leq i\leq N$, we calculate $\paba^*\iota_\alpha^*u_i$ as follows:
\begin{align*}
\paba^*\iota_\alpha^*u_i 
&= \paba^*\left( -\lambda_i + \sum_{j\in\alpha}D_i(D_{\alpha,j}^\vee)\cdot(u_j+\lambda_j) \right)	\\
&= -\lambda_i + \sum_{j\in\alpha}D_i(D_{\alpha,j}^\vee)\cdot(u_j+\lambda_j).
\end{align*}
On the other hand, we have
\begin{align*}
\pabb^*\iota_\beta^*u_i 
&= \pabb^*\left( -\lambda_i + \sum_{j\in\beta}D_i(D_{\beta,j}^\vee)\cdot(u_j+\lambda_j) \right)	\\
&= -\lambda_i + \sum_{j\in\beta}D_i(D_{\beta,j}^\vee)\cdot(u_j+\lambda_j).	
\end{align*}
Since $D_{i_{\beta,\alpha}} = D_{i_\ab} - \sum_{j\in\alpha\cap\beta}D_{i_\ab}(D_{\alpha,j}^\vee)\cdot D_j$, we can see that
\[
D_{\beta,j}^\vee = 
\begin{cases}
D_{\alpha,i_{\beta,\alpha}}^\vee										&\text{if } j=i_\ab,		\\
D_{\alpha,j}^\vee - D_{i_\ab}(D_{\alpha,j}^\vee)\cdot D_{\alpha,i_{\beta,\alpha}}^\vee	&\text{if } j\in\alpha\cap\beta.	
\end{cases} 
\]
Therefore, $\pabb^*\iota_\beta^*u_i $ is equal to
\begin{align*}
&-\lambda_i + D_i(D_{\alpha,i_{\beta,\alpha}}^\vee)\cdot(u_{i_\ab}+\lambda_{i_\ab}) + \sum_{j\in\alpha\cap\beta}\left( D_i(D_{\alpha,j}^\vee) - D_{i_\ab}(D_{\alpha,j}^\vee)\cdot D_i(D_{\alpha,i_{\beta,\alpha}}^\vee) \right)\cdot(u_j+\lambda_j)	\\
=&\ \paba^*\iota_\alpha^*u_i + D_i(\dab) \cdot \left( u_{i_\ab}+\lambda_{i_\ab}-u_{i_{\beta,\alpha}}-\lambda_{i_{\beta,\alpha}}-\sum_{j\in\alpha\cap\beta}D_{i_\ab}(D_{\alpha,j}^\vee)\cdot(u_j+\lambda_j) \right)	\\
=&\ \paba^*\iota_\alpha^*u_i - D_i(\dab)\cdot c_1^\T(L_\ab).
\end{align*}
Here we use $(1)$ and $(2)$.
\end{proof}

For later use, we study moduli spaces parametrizing multiple one-dimensional orbits.

\begin{definition}
Let $\alpha\in F_\sfL$, $\beta\in\adj(\alpha)$ and $k\in\N$.
We define $\ovcM^{\alpha,\beta}_k$ to be the closed substack of $\XLV_{0,2,k\cdot d_{\alpha\beta}}$ consisting of the stable maps which satisfy the following conditions:
\begin{itemize}
\item its domain is non-singular and surjects onto a one-dimensional orbit joining points on $\XLV_\alpha$ and $\XLV_\beta$;
\item the map is $\TL$-invariant;
\item the image of the first marking belongs to $\XLV_\alpha$ and that of the second marking belongs to $\XLV_\beta$.
\end{itemize}
\end{definition}

We can interpret $\XLV_{\alpha\cup\beta}$, $L_{\alpha,\beta}$ and $\ovcM^\ab_k$ as follows.
Firstly, since there is a canonical bijection
\begin{equation}
\label{eqn:bij_orbits}
\XLV_{\alpha\cup\beta} \stackrel{\sim}{\longrightarrow} \{ \text{one-dimensional orbits joining points on $\XLV_\alpha$ and $\XLV_\beta$}  \},
\end{equation}
we can understand $\XLV_{\alpha\cup\beta}$ as the moduli space of one-dimensional orbits in the variety $\XLV_{\alpha,\beta}$ which is the subvariety of $\XLV$ defined as
\[
\XLV_{\alpha,\beta} = \left\{ [v_1,\dots,v_N] \in \XLV \colon v_i = 0 \text{ for any $i\notin\alpha\cup\beta$} \right\} .
\]
In other words, we have $\ovcM^\ab_1 \cong \XLV_\acb$.
Its universal curve $\cC_{\alpha,\beta}$ is obtained by the blow up of $\XLV_{\alpha,\beta}$ along $\XLV_\alpha\amalg\XLV_\beta$ with the canonical maps $\pi\colon\cC_{\alpha,\beta}\to\XLV_{\alpha\cup\beta}$ and $\cC_{\alpha,\beta}\to\XLV_{\alpha,\beta}$.
There is a natural embedding of $L_{\alpha,\beta}$ into $\cC_{\alpha,\beta}$: 
\[
L_\ab \cong \Bl_{\XLV_\alpha} (\XLV) \setminus \XLV_\beta \hookrightarrow\cC_\ab,
\]
which gives the projection $L_{\alpha,\beta}\to\XLV_\acb$ by restricting $\pi$ to $L_\ab$.
Due to this isomorphism, $L_{\alpha,\beta}\to\XLV_{\alpha\cup\beta}$ can be seen as the universal tangent line bundle associated to the section $s_\alpha\colon\XLV_\acb\to\cC_\ab$ defined as follows: for any one-dimensional orbit $C$ passing through $\XLV_\alpha$ and $\XLV_\beta$ (which represents a point $[C]\in\XLV_\acb$ via \eqref{eqn:bij_orbits}), set $s_\alpha([C])\in\pi^{-1}([C]) \cong C$ to be the intersection of $C\subset\XLV_\ab$ and $\XLV_\alpha$.

Moreover, we can see that $\ovcM^\ab_k$ is isomorphic to the $k$-th root stack \cite[Section 2]{Cadman} \cite[Appendix B]{AGV:gw} of the line bundle $L_\ab$, which can be given as the quotient stack $[L_\ab^0/\C^\times]$ where $L_\ab^0=L_\ab\setminus\XLV_\acb$ and $\C^\times$ acts on $L_\ab^0$ by the formula $t\cdot x = t^kx$ for $t\in\C^\times$.
The universal map for $\ovcM^\ab_k$ is given by the diagram
\[
\xymatrix{
L_\ab^0\times\PP^1\ar[r]^F\ar[d]	&	\XLV_\ab	\\
L_\ab^0
}
\]
where $F$ is the morphism sending $([ (v_i)_{i\in\acb},s],[x,y])$ to $[(F_i(v,s,x,y))_{i\in\acb}]$:
\[
F_i(v,s,x,y) =
\begin{cases}
v_i				&i\in\alpha\cap\beta,	\\
x^kv_{i_{\beta,\alpha}}	&i=i_{\beta,\alpha},		\\
sy^kv_{i_\ab}		&i=i_\ab.			\\
\end{cases}
\]
Here we use the identification \eqref{eqn:Lab} and endow $\PP^1$ with the $\T$-action such that $t\cdot[x,y] = [tx,y]$ for $t\in\C^\times$.

\section{Lagrangian cones of toric bundles}
\label{sec:characterization}
Throughout this section, we fix a smooth toric data $\sfL$, a smooth projective variety $B$ and a collection of vector bundles $V_1,\dots,V_N$ whose duals are globally generated.
In this section, we study the $\TL$-equivariant Lagrangian cone of $\XLV$ (Theorem \ref{thm:characterization}) and establish a characterization of points on $\cL_{\XLV}$ in a similar manner to \cite{Brown,CCIT:mirror,JTY,FL}.

\subsection{Characterization theorem}
In this subsection, we state a characterization theorem for points on $\cL_{\XLV,\T}$ (Theorem \ref{thm:characterization}). 
The proof will be given in Section \ref{subsec:proof_characterization}.
We first introduce some notions.

\begin{definition}
Let $X$ be a smooth projective variety with trivial $\TL$-action and let $(S,v_S)$ be a semigroup with a discrete grading structure $v_S\colon S\to\Z_{\geq0}$ which extends that on $\Eff(X)$.
We choose a $\C$-basis $\{\phi_i\}_{i\in I}$ of $H^*(X)$ and let $\chi\in H^*_\T(\pt)$.
We say that a function 
\[
\f = \sum_{s\in S} \sum_{i\in I} Q^s \phi_i \cdot \f_{s,i}  \in H^*_\T(X)\otimes_{H^*_\T(\pt)}\Frac(H^*_\T(\pt)[z])[\![S]\!] = H^*(X)\otimes\C(\lambda,z)[\![S]\!],
\]
where $\f_{s,i}\in\C(\lambda,z)$ for any $(s,i)\in S\times I$, has a \emph{pole at} $z=\chi$ if there exists $(s,i)\in S\times I$ such that $\f_{s,i}$ has a pole along $z-\chi=0$.
Define a \emph{principle part of $\f$ at $z=\chi$} as
\[
\Prin_{z=\chi} \f = \sum_{s\in S} \sum_{i\in I} Q^s \phi_i \Prin_{z=\chi} \f_{s,i}.
\]
\end{definition}

\begin{theorem}
\label{thm:characterization}
Let $\sfL=(\LL^\vee,D\colon\Z^N\to\LL^\vee,\omega)$ be a smooth toric data, $B$ be a smooth projective variety and $V_1,\dots,V_N$ be vector bundles over $B$ whose duals are generated by global sections.
Let $x=(x_1,x_2,\dots)$ be formal vaiables and let $\f$ be an element of $H_\TL(\XLV)(\!(z^{-1})\!)[\![\extEff(\XLV)]\!][\![x]\!]$ such that $\f|_{(\cQ,x)=0}=-1z$.
In this situation, $\f$ is a $\Frac(H^*_\TL(\pt))[\![\extEff(\XLV)]\!][\![x]\!]$-valued point on $\cL_{\XLV,\TL}$ if and only if the following three conditions hold $:$
\begin{itemize}
\item[\textbf{(C1)}]		For each $\alpha\in F_\sfL$, the restriction $\iota_\alpha^*\f$ belongs to 
\[
\left(H^*_{\TL}(\XLV_\alpha)\otimes_{H^*_{\TL}(\pt)}\Frac(H^*_{\TL\times\C_z^\times}(\pt))\right)[\![\extEff(\XLV)]\!][\![x]\!],
\]
and is regular as a function in $z$ except possibly for poles at
\[
\{0,\infty\} \cup \left\{ \frac{\lambda_\ab}{k} \colon \beta\in\adj(\alpha), k\in\N \right\}.
\]
\item[\textbf{(C2)}]		The principal parts of the restrictions of $\f$ satisfy the following recursion relations: for any $\alpha\in F_\sfL$, $\beta\in\adj(\alpha)$ and $k\in\N$, we have
\[
\Prin_{z=\frac{\lambda_\ab}{k}}\iota_\alpha^*\f = {\paba}_* \left[ q^{k\cdot \dab} \cdot \frac{C_\ab(k)}{-kz+c_1^\TL(L_\ab)} \cdot \pabb^*\iota_\beta^*\f\left( z=\frac{c_1^\TL(L_\ab)}{k} \right) \right]
\]
where $C_\ab(k)$ is an element of $H^*_\TL(\XLV_\acb)_{\loc}$ defined as
\begin{align*}
C_\ab(k)^{-1} =\ &\prod_{c=1}^{k-1}\prod_{ \substack{\delta\colon\mathrm{Chern\ roots} \\ \mathrm{of\ } V_{i_{\alpha,\beta}}} } \left(\delta + p_{\alpha\cup\beta,\alpha}^*\iota_\alpha^*u_{i_{\alpha,\beta}}-\frac{c}{k}c^\TL_1(L_{\alpha,\beta})\right) \\
\cdot &\prod_{i\notin\beta}\prod_{c=1}^{k\cdot u_i(\dab)}\prod_{ \substack{\delta\colon\mathrm{Chern\ roots} \\ \mathrm{of\ } V_i} } \left(\delta + p_{\alpha\cup\beta,\alpha}^*\iota_\alpha^*u_i-\frac{c}{k}c^\TL_1(L_{\alpha,\beta})\right).
\end{align*}
\item[\textbf{(C3)}]		The Laurent expansion of $\iota_\alpha^*\f$ at $z=0$ is a $\Frac(H^*_\TL(\pt))[\![\extEff(\XLV)]\!][\![x]\!]$-valued point of $\cL_{\XLV_\alpha,(N_\alpha,e_\TL^{-1})}$ for any $\alpha\in F_\sfL$.
\end{itemize}
\end{theorem}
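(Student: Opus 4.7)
The plan is to follow the strategy of Brown \cite{Brown} and Coates--Corti--Iritani--Tseng \cite{CCIT:mirror}, adapted to the non-split setting using techniques from \cite{IK:quantum}. Both directions rest on the virtual localization formula applied to $\XLV_{0,n+1,d}$ with respect to the fiberwise $\TL$-action, combined with the classical localization isomorphism, which reconstructs $\f$ from the collection $\{\iota_\alpha^*\f\}_{\alpha \in F_\sfL}$ after inverting equivariant parameters.

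For the necessity direction, I would begin from the explicit $\ev_1$-pushforward presentation \eqref{eqn:equiv_cone} of a point on $\cL_{\XLV,\TL}$, and compute $\iota_\alpha^*\f$ by virtual localization. The restriction of $\psi_1$ to a component of $\XLV_{0,n+1,d}^\TL$ takes values in $\{0\} \cup \{c_1^\TL(L_\ab)/k : \beta \in \adj(\alpha),\ k \in \mathbb{N}\}$, depending on whether the first marking sits on a contracted component or at the attaching node of a $k$-fold cover chain joining $\XLV_\alpha$ to $\XLV_\beta$; the factor $-z-\psi_1$ in \eqref{eqn:equiv_cone} then produces poles in $z$ precisely at $z=\lambda_\ab/k$, giving (C1). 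For (C2), the principal part at $z=\lambda_\ab/k$ collects the contribution of those fixed components parametrized by $\ovcM^\ab_k$; the virtual normal bundle to $\ovcM^\ab_k$ is the universal tangent line at the smoothing node tensored with the moving parts of the pullback of $\bigoplus_{i\notin\beta} V_i \otimes L_i$ under the $k$-fold cover, and expanding via Chern roots of the non-split $V_i$'s yields exactly $C_\ab(k)^{-1}$ together with the $-kz+c_1^\TL(L_\ab)$ factor. For (C3), the Laurent expansion of $\iota_\alpha^*\f$ at $z=0$ collects contributions of fully contracted stable maps into $\XLV_\alpha$, and combining virtual localization with the equivalence between Gromov--Witten theory of the total space of $N_\alpha$ and $(N_\alpha, e_\TL^{-1})$-twisted theory of $\XLV_\alpha$ recalled in Subsection~\ref{subsubsec:GW_vector_bundle} shows it represents a point on $\cL_{\XLV_\alpha, (N_\alpha, e_\TL^{-1})}$.

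For the sufficiency direction, I would argue by induction on the $(\cQ,x)$-adic valuation. Given $\f$ satisfying (C1)--(C3), pick any $\f_0 \in \cL_{\XLV,\TL}$ whose polynomial-in-$z$ input agrees with that of $\f$; by the necessity direction, $\f_0$ satisfies the same three conditions with the same prescribed principal parts and the same twisted-cone data on each fixed locus. The difference $\f - \f_0$ is determined by $\{\iota_\alpha^*(\f-\f_0)\}_{\alpha \in F_\sfL}$ via classical localization; each such restriction has no poles at $z=\lambda_\ab/k$ (since (C2) is linear and the principal parts match) and no other nonzero poles (by (C1)). Condition (C3) for both $\f$ and $\f_0$, together with the fact that $\cL_{\XLV_\alpha,(N_\alpha,e_\TL^{-1})}$ is a Lagrangian cone swept out by its tangent spaces, forces the Laurent expansion of each $\iota_\alpha^*(\f-\f_0)$ at $z=0$ to lie in the tangent space to the twisted cone at $\iota_\alpha^*\f_0$. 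Reassembling these tangent data through classical localization identifies $\f - \f_0$ with a tangent vector to $\cL_{\XLV,\TL}$ at $\f_0$, and since the Lagrangian cone is scaling-invariant and ruled by its tangent spaces, I conclude $\f \in \cL_{\XLV,\TL}$.

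The main obstacle is the sufficiency direction. In Brown's split setting each fixed locus $\XLV_\alpha$ equals $B$, so (C3) reduces to a condition on the Givental cone of $B$ twisted by a sum of line bundles, which is under direct control; in the non-split case the loci are genuine fiber products of projective bundles and the internal consistency of (C3) relies on the twisted mirror theorem of Section~\ref{sec:twist_mirror} and the quantum Riemann--Roch apparatus recalled in Subsection~\ref{subsec:QRR}. A related technical point is that $C_\ab(k)$ is now a cohomology class on $\XLV_\acb$ built from Chern roots of non-split bundles rather than a scalar, so the tracking of principal parts through the pushforward $\paba$ and through the identification of the smoothing-node tangent line with $L_\ab/k$ requires more care than in the split case.
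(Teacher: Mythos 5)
Your necessity direction is essentially the paper's proof: virtual localization on $\XLV_{0,n+1,\cD}$, restriction of $\psi_1$ to fixed loci, identification of the principal parts at $z=\lambda_\ab/k$ with contributions from multiple covers of one-dimensional orbits, and recognition of the $z=0$ Laurent expansion as a point on the twisted cone of $\XLV_\alpha$. The paper organizes this more explicitly via the decomposition of decorated graphs into types $(\alpha,0)$, $(\alpha,1)$, $(\alpha,2)$ (Propositions \ref{prop:cont0}, \ref{prop:cont1}, \ref{prop:cont2}), but your sketch hits the same content.

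The sufficiency direction has a genuine gap. You pick $\f_0\in\cL_{\XLV,\TL}$ with the same polynomial-in-$z$ input as $\f$ and assert that, by the necessity direction, $\f_0$ satisfies (C1)--(C3) ``with the same prescribed principal parts.'' This is circular: the principal parts $\Prin_{z=\lambda_\ab/k}\iota_\alpha^*\f_0$ are, via (C2), built from $\pabb^*\iota_\beta^*\f_0(c_1^\TL(L_\ab)/k)$, which depends on the entire function $\f_0$, not just its nonnegative $z$-part. You cannot know the principal parts of $\f$ and $\f_0$ agree until you already know $\f=\f_0$. You also gloss over a second issue: even if you managed to show $\f-\f_0$ is a tangent vector in $T_{\f_0}\cL$, that does not place $\f$ on the cone, since $\cL\cap T_{\f_0}\cL = zT_{\f_0}\cL \subsetneq T_{\f_0}\cL$; you would need $\f-\f_0\in zT_{\f_0}\cL$ specifically.

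The paper avoids both problems by proving a clean uniqueness statement: conditions (C1)--(C3) together with $z^{-1}\Prin_{z=\infty}(z\f)$ determine $\f$ uniquely. This is an induction on $(\cQ,x)$-degree, and the step that makes it work is that (C2) carries an overall factor $q^{k\cdot\dab}$ with $k\geq1$, so the degree-$(n+1)$ piece $\Prin_{z=\lambda_\ab/k}(\iota_\alpha^*\f)_{n+1}$ is expressed entirely in terms of $\f_{\leq n}$. Then (C3) forces the degree-$(n+1)$ piece of $\Prin_{z=0}\iota_\alpha^*\f$, because a point on a Lagrangian cone is determined by its nonnegative $z$-part and the lower-degree data. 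Once uniqueness is in hand, comparing with the actual Givental cone point having the same polynomial input immediately gives $\f\in\cL_{\XLV,\TL}$. You mention induction on the $(\cQ,x)$-adic valuation but then present the argument as a one-shot comparison of $\f$ with $\f_0$; you need to restructure the sufficiency proof around the degree-shifting property of (C2) for the induction to close.
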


The key tool for the proof is the virtual localization (Section \ref{subsubsec:vir_loc}).
The $\TL$-action on $\XLV$ induces a $\TL$-action on the moduli stack $\XLV_{0,n,\cD}$.
We will study the $\TL$-fixed locus $\XLV_{0,n,\cD}^\TL\subset\XLV_{0,n,\cD}$ and its virtual normal bundle in order to compute the virtual localization formula.
In the rest of this section, we will give a proof of Theorem \ref{thm:characterization}.

\begin{remark}
This theorem can be directly generalized to general points on $\cL_{\XLV,\TL}$.
However, since it is enough to characterize $\C[\![\extEff(\XLV]\!][\![x]\!]$-valued points for our purpose, we will not deal with the general case.
\end{remark}

\subsection{Decorated graphs and fixed stable maps}
\label{subsubsec:DG}
We introduce the notion of an \emph{$\sfL$-decorated graph} and describe the stack of $\TL$-fixed stable maps.
We first define a graph $\Gamma_\sfL$ associated to $\sfL$.
We let the set of vertices $V(\Gamma_\sfL)$ equal $F_\sfL$, and the edge joining $\alpha\in F_\sfL$ and $\beta\in F_\sfL$ exists if and only if they are adjacent (in the sense of Definition \ref{def:fixed_locus}).

\begin{definition}[\cite{Brown,Liu,CCIT:mirror,JTY,FL}]
\phantom{A} 
\begin{itemize}
\item[$(1)$] An ($n$-pointed) \emph{$(\sfL,\vecV)$-decorated tree} $\vecG=(\Gamma,\vecalpha,\veck,\veccD,\vecs)$ consists of the following data:
\begin{itemize}
\item a connected acyclic undirected graph $\Gamma$;
\item a graph homomorphism $\vecalpha\colon \Gamma\to\Gamma_\sfL$, called a \emph{label map}, which sends $v\in V(\Gamma)$ to $\alpha_v\in F_\sfL$;
\item an \emph{edge-degree map} $\veck\colon E(\Gamma)\to\Z_{>0}$ which sends $e\in E(\Gamma)$ to a positive integer $k_e$;
\item a \emph{vertex-degree map} $\veccD\colon V(\Gamma)\to\Eff(\XLV)$ which sends $v\in V(\Gamma)$ to an effective curve class $\cD_v\in\Eff(\XLV_{\alpha_v})$;
\item a \emph{marking map} $\vecs\colon\{1,\dots,n\}\to V(\Gamma)$ which sends $i$ to $s_i\in V(\Gamma)$ (this datum is trivial if $n=0$).
\end{itemize}

\item[$(2)$] Let $e\in E(\Gamma)$ and $v\in V(\Gamma)$.
We call the pair $(e,v)$ a \emph{flag} if $v$ incident to $e$.
The set of flags for $\Gamma$ is denoted by $F(\Gamma)$.

\item[$(3)$] For any $v\in V(\Gamma)$, we define $\adj(v)$ to be the set of adjacent vertices of $v$, and $\mrk(v)$ to be the set of markings on $v$:
\begin{align*}
\adj(v) &= \{ v'\in V(\Gamma) \colon \text{there exists an edge between $v$ and $v'$} \}, \\
\mrk(v) &= \vecs^{\,-1}(v).
\end{align*}
We write the valency of $v$ as $\val(v):=|\adj(v)|+|\mrk(v)|$.

\item[$(4)$] For $e\in E(\Gamma)$, define $d_e:=d_{\alpha_v\alpha_{v'}}\in\Eff(\XLV)$ where $v,v'\in V(\Gamma)$ denote the endpoints of $e$.
Define 
\[
\deg(\vecG) = \sum_{e\in E(\Gamma)} k_ed_e + \sum_{v\in V(\Gamma)} \cD_v,
\]
which we call a \emph{degree of $\vecG$}.

\item[$(5)$] We define $\DG_{0,n,\cD}=\DG_{0,n,\cD}(\sfL,\vecV)$ to be a set of all  $n$-pointed $(\sfL,\vecV)$-decorated trees of of degree $\cD$.
\end{itemize}
\end{definition}

We can naturally associate an $n$-pointed decorated graph $(\Gamma,\vecalpha,\veck,\veccD,\vecs)$ of degree $\cD$ to a $\TL$-fixed stable map $[f\colon(C,\bp)\to\XLV]\in\XLV_{0,n,\cD}$ as follows.
Let $E(\Gamma)$ be a set of irreducible components of $C$ sent by $f$ dominantly to a one-dimensional orbit in $\XLV$, and let $V(\Gamma)$ be a set of connected components of the set $f^{-1}(\XLV^\TL)$.
These sets with topological data of $C$ give a tree $\Gamma$.
We write for $e\in E(\Gamma)$ (resp. $v\in V(\Gamma)$) the corresponding subvariety of $C$ as $C_e$ (resp. $C_v$).
We remark that $C_v$ might be just one point.
The other data of $\vecG$ are determined by the following conditions:
\begin{itemize}
\item for $v\in V(\Gamma)$, the image of $C_v$ via $f$ is in $\XLV_{\alpha_v}$;
\item for $e\in E(\Gamma)$, we have $\deg(f|_{C_e}) = k_ed_e$;
\item for $v\in V(\Gamma)$, we have $\deg(f|_{C_v}) = \cD_v$;
\item for $1\leq i\leq n$, the $i$-th marking point is on the component $C_{s_i}$ or equal to $C_{s_i}$ if $C_{s_i}$ is just a one point.
\end{itemize}

For convenience, we introduce the following notation.
\begin{notation}
Let $X$ be a smooth projective variety.
We set $X_{0,1,0} = X$ and $X_{0,2,0} = X$.
When we regard $x\in X$ as a point on $X_{0,1,0}$ or $X_{0,2,0}$, we sometimes denote it as follows:
\begin{align*}
[f\colon(C=\pt,p_1)\to\XLV]&\in X_{0,1,0} \\
[f\colon(C=\pt,p_1,p_2)\to\XLV]&\in X_{0,2,0}
\end{align*}
where in each case the image of $f$ is $x$.
We also define evaluation maps:
\begin{align*}
\ev_1=\id_X\colon X_{0,1,0}\to X, \\
\ev_1=\ev_2=\id_X\colon X_{0,2,0}\to X.
\end{align*}
\end{notation}
We decompose the stack of fixed stable maps $\XLV_{0,n,\cD}^\TL$ with respect to $(\sfL,\vecV)$-decorated graphs.
For a decorated tree $\vecG$ we define a stack $\ovcM_\vecG$ as follows.
First we take and fix a total order $\{1,\dots,A=\#V(\Gamma)\}\to V(\Gamma)$, of which the image of $i$ is denoted by $v_i$, that satisfies the following; for any $1\leq i\leq A$, the full subgraph $\Gamma_i$ of $\Gamma$ formed by $\{ v_1,\dots,v_i \}$ is connected.
This is possible since $\Gamma$ is connected.
Then we recursively define $\ovcM_i$ for $1\leq i\leq A$.
We set $\ovcM_1 = (\XLV_{\alpha_{v_1}})_{0,\val(v_1),\cD_{v_1}}$.
Assume that we know $\ovcM_i$ for some $i<A$.
There is a unique vertex $v\in V(\Gamma_i)$ which is adjacent to $v_{i+1}$ in $\Gamma$.
We write $e\in E(\Gamma)$ for the edge joining $v$ and $v_{i+1}$.
We first take the fiber product with respect to $\ovcM_i\to\XLV_{\alpha_v}$ and $\ev_1\colon\ovcM^{\alpha_v,\alpha_{v_{i+1}}}_{k_e}\to\XLV_{\alpha_v}$.
Here the first morphism is the composition
\[
\ovcM_i	\to	(\XLV_{\alpha_v})_{0,\val(v),\cD_v}	\xrightarrow{\ev_{v_{i+1}}}	\XLV_{\alpha_v}	
\]
where the first map is a canonical projection.
We define $\ovcM_{i+1}$ as the fiber product with respect to 
\[
\ev_v\colon(\XLV_{\alpha_{v_{i+1}}})_{0,\val(v_{i+1}),\cD_{v_{i+1}}}\to\XLV_{\alpha_{v_{i+1}}}
\]
and 
\[
\ovcM_i\times_{\XLV_{\alpha_v}}\ovcM^{\alpha_v,\alpha_{v_{i+1}}}_{k_e}\to\ovcM^{\alpha_v,\alpha_{v_{i+1}}}_{k_e}\xrightarrow{\ev_2}\XLV_{\alpha_{v_{i+1}}}.
\]
Finally, we set $\ovcM_\vecG = \ovcM_A$. 
Note that $\ovcM_\vecG$ is determined independently of the choice of the total order.

There exists a natural morphism $\ovcM_\vecG\to\XLV_{0,n,\cD}^\TL$.
For each point 
\begin{align*}
&\left( \left( \left[ f_e \colon (C_e=\PP^1,\bp_e) \to \XLV \right] \right)_{e\in E(\Gamma)}, \right. \\
&\left. \left( \left[ f_v \colon \left(C_v, \bp_v = \{ p_{v,v'} \}_{v'\in\adj(v)} \amalg \{ p_i \}_{i\in\mrk(v)}\right) \to \XLV_{\alpha_v} \right] \right)_{v\in V(\Gamma)} \right)
\end{align*}
on $\ovcM_\vecG$, we assign a $\TL$-fixed stable map $[f\colon(C,\bp)\to\XLV]$ as follows.
For $e\in E(\Gamma)$, we denote its endpoints as $\{ v_e, v_e'\}$, and set $\bp_e=\{ p_{e,v_e}, p_{e,v_e'} \}$ which satisfies $f_e(p_{e,v_e})=f_{v_e}(p_{v_e,v_e'})$ and $f_e(p_{e,v_e'})=f_{v_e'}(p_{v_e',v_e})$.
We set
\[
C = \left. \left( \coprod_{v\in V(\Gamma)} C_v \amalg \coprod_{e\in E(\vecG)} C_e \right) \right/ \sim
\]
where the equivalence relation $\sim$ is generated by $p_{v_e,v_e'}\sim p_{e,v_e}$ and $p_{v_e',v_e}\sim p_{e,v_e'}$ for each $e\in E(\Gamma)$.
For each $i$ with $1\leq i\leq n$, there are points $[p_i]\in C$, which will be denoted by $p_i$.
The maps $f_v$ and $f_e$ induce a morphism $f\colon(C,\bp)\to\XLV$, which is stable and fixed by $\TL$.
This gives rise to the morphism $\ovcM_\vecG\to\XLV_{0,n,\cD}^\TL$.

In order to obtain a substack of $\XLV_{0,n,\cD}^\TL$ from $\ovcM_\vecG$, we need to take a quotient by $\Aut(\vecG)$, the automorphism group of the decorated graph $\vecG$.

\begin{proposition}
We have an isomorphism of Deligne-Mumford stacks $:$
\[
\XLV_{0,n,\cD}^\TL \cong \coprod_{\vecG\in\DG_{0,n,\cD}} \left[ \left.\ovcM_\vecG\right/\Aut(\vecG) \right].	
\]
\end{proposition}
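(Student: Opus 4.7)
The plan is to produce a natural morphism $\ovcM_\vecG/\Aut(\vecG) \to \XLV_{0,n,\cD}^\TL$ for each $\vecG \in \DG_{0,n,\cD}$ (the forward direction already sketched in the excerpt), assemble them into a morphism from the disjoint union, and then verify that the result is a bijection on geometric points and an isomorphism on infinitesimal deformations. This is the standard Kontsevich/Graber--Pandharipande style decomposition, adapted to the toric bundle setting.

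First, I would argue that a $\TL$-fixed stable map $[f\colon(C,\bp)\to\XLV]$ canonically produces a decorated graph $\vecG\in\DG_{0,n,\cD}$. Since $\TL$ acts on $\XLV$ fiberwise over $B$, each irreducible component of $C$ either maps into the fixed locus $\XLV^\TL = \coprod_{\alpha \in F_\sfL} \XLV_\alpha$ (a ``vertex component'') or is mapped dominantly onto the closure of a one-dimensional $\TL$-orbit (an ``edge component''). By the description of Section \ref{subsec:fixed_locus}, the closure of a one-dimensional orbit joins exactly two adjacent fixed loci $\XLV_\alpha$ and $\XLV_\beta$, and a $\TL$-invariant stable map dominating such a closure must have source $\PP^1$, be ramified only at the two $\TL$-fixed points, and therefore be determined up to isomorphism by its degree $k_e$ along the fiber direction. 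All nodes and markings automatically lie in the fixed locus. Assigning vertices to connected components of $f^{-1}(\XLV^\TL)$, edges to the edge components, together with labels $\alpha_v$, vertex- and edge-degrees, and marking map $\vecs$, gives a well-defined element of $\DG_{0,n,\cD}$ up to the action of $\Aut(\vecG)$.

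Second, I would identify the preimage of a fixed $\vecG$ with $\ovcM_\vecG/\Aut(\vecG)$. A $\TL$-fixed stable map with graph $\vecG$ is the same data as the following: for each vertex $v$, a stable map $[f_v\colon(C_v,\bp_v)\to\XLV_{\alpha_v}]$ of degree $\cD_v$ with $\val(v)$ markings; for each edge $e$ with endpoints $v,v'$ of edge-degree $k_e$, a $\TL$-fixed two-pointed stable map onto a one-dimensional orbit joining $\XLV_{\alpha_v}$ and $\XLV_{\alpha_{v'}}$, i.e.\ a point of $\ovcM^{\alpha_v,\alpha_{v'}}_{k_e}$; and node-matching conditions $f_v(p_{v,v'}) = \ev_1(f_e)$ and $f_{v'}(p_{v',v}) = \ev_2(f_e)$ for each flag. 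These matching conditions are precisely what the recursive fiber-product definition of $\ovcM_\vecG$ encodes; finally, quotienting by $\Aut(\vecG)$ identifies tuples that differ only by relabelings of isomorphic subtrees. The automorphisms of the edge maps themselves (the $\mu_{k_e}$-stabilizers of a $k_e$-fold $\TL$-equivariant cover of $\PP^1$) are already built into $\ovcM^{\alpha_v,\alpha_{v'}}_{k_e}$ via its presentation as the root stack of $L_{\alpha_v,\alpha_{v'}}$.

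Finally, to upgrade this bijection to an isomorphism of Deligne--Mumford stacks one must match tangent-obstruction complexes. On the $\XLV_{0,n,\cD}^\TL$ side, the fixed part of the deformation theory of $f$ splits according to the combinatorial pieces of $\vecG$: deformations of each vertex map $f_v$ in $(\XLV_{\alpha_v})_{0,\val(v),\cD_v}$, deformations of each edge map in $\ovcM^{\alpha_v,\alpha_{v'}}_{k_e}$ (which is rigid away from its root-stack inertia), and gluing conditions at each node. This is precisely the tangent complex of $\ovcM_\vecG$ built out of the iterated fiber product. The main technical point --- and the step where I expect the real work to be --- is verifying that the node-smoothing contributions and the stabilizers coming from the root stack on each edge are correctly captured after taking the $\Aut(\vecG)$-quotient, so that the two stacks agree not merely as coarse moduli but as Deligne--Mumford stacks. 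Once this is checked, the disjointness of the images (for distinct $\vecG$) follows from the canonicity of the construction in step one, completing the isomorphism.
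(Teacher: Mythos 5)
The paper actually states this proposition without proof: it constructs the morphism $\ovcM_\vecG\to\XLV_{0,n,\cD}^\TL$ just above, observes that one must quotient by $\Aut(\vecG)$, and then records the isomorphism, implicitly deferring to the references it cites (\cite{Brown,Liu,CCIT:mirror,JTY,FL} in the Graber--Pandharipande tradition). Your sketch is exactly that standard argument, and it is essentially correct: you have the right dichotomy of components of the source curve (vertex components in $\XLV^\TL$ versus edge components dominating closures of one-dimensional orbits), the right classification of $\TL$-invariant edge covers (degree-$k$ covers of $\PP^1$ ramified only over the two fixed points), and you correctly observe that the $\mu_{k_e}$-stabilizers of those edge maps are absorbed by presenting $\ovcM^{\alpha,\beta}_{k_e}$ as a $k_e$-th root stack of $L_{\alpha,\beta}$.

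One step worth tightening is the final upgrade to an isomorphism of Deligne--Mumford stacks. A bijection on geometric points together with matching tangent and obstruction spaces is not by itself the right criterion here, since $\XLV_{0,n,\cD}^\TL$ need not be smooth and one also has to match automorphism groups on the nose, not merely their cardinalities. The cleaner route is a families statement: for any test scheme $T$, a $T$-point of $\XLV_{0,n,\cD}^\TL$ whose geometric fibers all have combinatorial type $\vecG$ is canonically the same data as a $T$-point of $\left[\ovcM_\vecG/\Aut(\vecG)\right]$ (one checks first that the decorated graph is locally constant in flat families of $\TL$-fixed stable maps, which gives the decomposition into a disjoint union indexed by $\DG_{0,n,\cD}$). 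This subsumes the infinitesimal comparison you describe and is precisely where the matching of stabilizers --- both the discrete $\Aut(\vecG)$-action on the pieces and the $\mu_{k_e}$-inertia coming from the edge root stacks --- is verified. With that reframing, your argument is complete.
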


Finally, we introduce some morphisms related to $\ovcM_\vecG$. 

\begin{definition}
Let $\vecG$ be an $n$-pointed $(\sfL,\vecV)$-decorated tree. 
\begin{itemize}
\item[$(1)$] Define $i_\vecG\colon\ovcM_\vecG\to\XLV_{0,n,\cD}$ to be a composition of the canonical morphism $\ovcM_\vecG\to\XLV_{0,n,\cD}^\TL$ and the embedding $\XLV_{0,n,\cD}^\TL\to\XLV_{0,n,\cD}$.

\item[$(2)$] For $v\in V(\Gamma)$, $\pr_v$ denotes a projection to the component corresponding to $v\in V(\Gamma)$: 
\[
\pr_v\colon\ovcM_\vecG\to(\XLV_{\alpha_v})_{0,\val(v),\cD_v}.
\]
For $e\in E(\Gamma)$, $\pr_e$ denotes a canonical projection
\[
\pr_e\colon\ovcM_\vecG\to\ovcM^{\alpha,\beta}_{k_e}
\] 
where $\alpha$ and $\beta$ are labels of endpoints of $e$.

\item[$(3)$] For $1\leq i\leq n$, define
\[
\ev_{\vecG,i}\colon\ovcM_\vecG\to\XLV_{\alpha_{s_i}}
\]
to be a composition $\ev_i\circ\pr_{s_i}$.
\end{itemize}
\end{definition}

Let $\vecG\in\DG_{0,n+1,\cD}$ be an $(\sfL,\vecV)$-decorated graph.
We write the pull-back to $\ovcM_\vecG$ of the virtual normal bundle of $\cF_\vecG$ as $N^{\vir}_\vecG$.
For any point $[f\colon(C,\bp)\to\XLV]$ on the moduli space $\XLV_{0,n,\cD}$, we have the \emph{tangent-obstruction exact sequence}:
\[
\xymatrix@R=0pt{
0\ar[r]	&	\Hom_{\cO_C}(\Omega_C(\bp),\cO_C)\ar[r]		&	H^0(C,f^*T_{\XLV})\ar[r]	&	T^1_\vecG		&	\\
\ar[r]		&	\Ext^1_{\cO_C}(\Omega_C(\bp),\cO_C)\ar[r]	&	H^1(C,f^*T_{\XLV})\ar[r]	&	T^2_\vecG\ar[r]	&	0	
}
\]
where $T^1_\vecG$ is the tangent space and $T^2_\vecG$ is the obstruction space at $[f]$.
Collecting on $\ovcM_\vecG$ the spaces appearing in the above sequence gives rise to $\TL$-equivariant sheaves $\Aut(C,\bp)_\vecG$, $\Def(f)_\vecG$, $\cT^1_\vecG$, $\Def(C,\bp)_\vecG$, $\Ob(f)_\vecG$ and $\cT^2_\vecG$ respectively, and we have the exact sequence of sheaves:
\[
0\to\Aut(C,\bp)_\vecG\to\Def(f)_\vecG\to\cT^1_\vecG\to\Def(C,\bp)_\vecG\to\Ob(f)_\vecG\to\cT^2_\vecG\to0.
\]
These sheaves are endowed with $\TL$-actions, and all arrows are $\TL$-equivariant.
By taking the moving parts we obtain the following exact sequence:
\[
\xymatrix@R=0pt{
0\ar[r]	&	\Aut(C,\bp)_\vecG^{\mov}\ar[r]	&	\Def(f)_\vecG^{\mov}\ar[r]	&	\cT^{1,\mov}_\vecG		&	\\
\ar[r]		&	\Def(C,\bp)_\vecG^{\mov}\ar[r]	&	\Ob(f)_\vecG^{\mov}\ar[r]	&	\cT^{2,\mov}_\vecG\ar[r]	&	0.	
}
\]
Since $N^{\vir}_\vecG = \cT^{1,\mov}_\vecG \ominus \cT^{2,\mov}_\vecG$, it follows that
\begin{align}
\label{eqn:vir_normal}
e_\TL(N^{\vir}_\vecG) = \frac{e_\TL(\Def(C,\bp)_\vecG^{\mov}) \cdot e_\TL(\Def(f)_\vecG^{\mov})}{e_\TL(\Aut(C,\bp)_\vecG^{\mov})\cdot e_\TL(\Ob(f)_\vecG^{\mov})}.
\end{align}
The vector bundles $\Aut(C,\bp)_\vecG^{\mov}$, $\Def(C,\bp)_\vecG^{\mov}$ and $\Def(f)_\vecG\ominus\Ob(f)_\vecG$ over $\ovcM_\vecG$ can be described as follows \cite{Liu}:
\begin{equation}
\label{eqn:fiber_aut_def_ob}
\begin{split}
\Aut(C,\bp)_\vecG^{\mov}		&=	\bigoplus_{ \substack{(e,v)\in F(\Gamma); \\ |\adj(v)|=1,|\mrk(v)|=0, \cD_v=0} } \pr_e^*\cL^\vee_{e,v},	\\
\Def(C,\bp)_\vecG^{\mov}		&=	\bigoplus_{ \substack{v\in V(\Gamma) \\ \adj(v)=\{e,e'\}, |\mrk(v)|=0, \cD_v=0} } \pr_e^*\left( \cL^\vee_{e,v}\otimes\cL^\vee_{e',v} \right) \\
&\phantom{=} \oplus \bigoplus_{ \substack{(e,v)\in F(\Gamma); \\ \val(v)\geq3 \text{ or } \cD_v\neq0} } \pr_v^*\cL^\vee_{v,e}\otimes \pr_e^*\cL^\vee_{e,v}, \\
\Def(f)_\vecG\ominus\Ob(f)_\vecG	&=	\bigoplus_{ \substack{v\in V(\Gamma); \\ \val(v)\geq3 \text{ or } \cD_v\neq0} } \pr_v^* {\R g_v}_* F_v^* \iota_{\alpha_v}^* T_{\XLV} \oplus \bigoplus_{e\in E(\Gamma)} \pr_e^* {\R g_e}_* F_e^* T_{\XLV} \\
&\phantom{=} \ominus \bigoplus_{ \substack{v\in V(\Gamma); \\ |\adj(v)|=2, |\mrk(v)|=0, \cD_v=0} } \pr_v^* \iota_{\alpha_v}^* T_{\XLV} \\
&\phantom{=} \ominus \bigoplus_{ \substack{(e,v)\in F(\Gamma); \\ \val(v)\geq3 \text{ or } \cD_v\neq0} } \pr_e^* \ev_{e,v}^* T_{\XLV}
\end{split}
\end{equation}
where $\ev_{e,v}\colon\ovcM^ab_{k_e}\to\XLV$ is the evaluation map associated to $(e,v)\in F(\Gamma)$ where $\alpha,\beta$ are the labels of the endpoints of $e$, and $g_v$ and $F_v$ (resp. $g_e$ and $F_e$) fit into the following diagram for the universal map over $(\XLV_{\alpha_v})_{0,\val(v),\cD_v}$ (resp. $\ovcM^\ab_{k_e}$):
\[
\xymatrix{
\cC_v\ar[r]^-{F_v} \ar[d]_-{g_v}	&	\XLV_\alpha	&	\cC_e \ar[r]^-{F_e} \ar[d]_-{g_e}	& \XLV \\
(\XLV_\alpha)_{0,\val(v),\cD_v}	&				&	\ovcM^{\alpha,\beta}_k			& 
}
\]

Let $\f$ be a $\Frac(H^*_\TL(\pt))[\![\extEff(\XLV)]\!][\![x]\!]$-valued point on $\cL_{\XLV,\T}$.
By definition, $\f$ is an element of $H^*_\TL(\XLV)_{\loc}(\!(z^{-1})\!)[\![\extEff(\XLV)]\!][\![x]\!]$ and can be written as
\begin{align*}
\f = -1\cdot z + \bt(z) + \sum_{ \substack{n\geq0, \cD\in\Eff(\XLV) \\ (n,\cD)\neq(0,0),(1,0)} } \frac{\cQ^\cD}{n!} \cdot {\ev_1}_* \left[ \frac{\prod_{i=2}^{n+1} \ev_i^* \bt(\psi_i)}{-z-\psi_1} \cap [\XLV_{0,n+1,\cD}]^{\vir} \right]
\end{align*}
where $\bt(z)\in H^*_\TL(\XLV)_{\loc}[z][\![\extEff(\XLV)]\!][\![x]\!]$ with $\bt(z)|_{(\cQ,x)=0}$.
Using the virtual localization formula (Theorem \ref{thm:vir_loc}), we have
\begin{equation}
\label{eqn:f_restricted}
\iota_\alpha^*\f = -1\cdot z + \iota_\alpha^* \bt(z) + \sum_{ \substack{n\geq0, \cD\in\Eff(\XLV) \\ (n,\cD)\neq(0,0),(1,0)} } \frac{\cQ^\cD}{n!} \sum_{\vecG\in\DG_{0,n+1,\cD}} \iota_\alpha^*\Cont(\vecG)
\end{equation}
where
\[
\Cont(\vecG) := \frac{1}{|\Aut(\vecG)|} \cdot {\ev_1}_* {i_\vecG}_*  \left[ i_\vecG^* \left( \frac{\prod_{i=2}^{n+1} \ev_i^* \bt(\psi_i)}{-z-\psi_1} \right) e_\TL\left(N^{\vir}_\vecG\right)^{-1} \cap \left[\ovcM_\vecG\right]^{\vir} \right].
\]
In order to compute the right-hand side, $\DG_{0,n+1,\cD}$ is divided into three parts.

\begin{definition}
Let $\alpha\in F_\sfL$.
Define
\begin{align*}
\DG^{\alpha,0}_{0,n+1,\cD}	:=& \left\{ \vecG\in\DG_{0,n+1,\cD} \colon \alpha_{s_1}\neq\alpha \right\}, \\
\DG^{\alpha,1}_{0,n+1,\cD}	:=& \left\{ \vecG\in\DG_{0,n+1,\cD} \colon \alpha_{s_1}=\alpha, \val(s_1)=2, \cD_{s_1}=0 \right\}, \\
\DG^{\alpha,2}_{0,n+1,\cD}	:=& \DG_{0,n+1,\cD} \setminus \left(\DG^{\alpha,0}_{0,n+1,\cD} \cup \DG^{\alpha,1}_{0,n+1,\cD}\right), \\
\DG^\alpha_{0,n+1,\cD}		:=& \DG^{\alpha,1}_{0,n+1,\cD} \amalg \DG^{\alpha,2}_{0,n+1,\cD}.
\end{align*}
These sets give decompositions
\begin{align*}
\DG_{0,n+1,\cD} &= \DG^{\alpha,0}_{0,n+1,\cD} \amalg \DG^{\alpha,1}_{0,n+1,\cD} \amalg \DG^{\alpha,2}_{0,n+1,\cD},	\\
\DG_{0,n+1,\cD} &= \coprod_{\alpha\in F_\sfL} \DG^\alpha_{0,n+1,\cD}.
\end{align*}
We call an element of $\DG^{\alpha,i}_{0,n+1,\cD}$ a \emph{graph of type $(\alpha,i)$} or an \emph{$(\alpha,i)$-type graph}.
\end{definition}

\begin{proposition}
\label{prop:cont0}
If $\vecG$ is an $(\alpha,0)$-type graph, its contribution $\iota_\alpha^*\Cont(\vecG)$ equals zero.
\end{proposition}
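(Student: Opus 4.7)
The plan is to show that $\iota_\alpha^*\Cont(\vecG)$ vanishes for purely formal reasons: for an $(\alpha,0)$-type graph, the first marked point is attached to a vertex whose label $\alpha_{s_1}$ differs from $\alpha$, so the first evaluation map lands in a connected component of $\XLV^\TL$ which is disjoint from $\XLV_\alpha$. The contribution is then supported off of $\XLV_\alpha$ and dies upon restriction.

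More precisely, I would first observe that by the construction of $\ovcM_\vecG$ and the description of $\TL$-fixed stable maps in Section~\ref{subsubsec:DG}, the image of $p_1$ under any point $[f\colon(C,\bp)\to\XLV]$ of $\ovcM_\vecG$ lies in $\XLV_{\alpha_{s_1}}$. Hence the evaluation at the first marking factors as
\[
\ev_1 \circ i_\vecG \;=\; \iota_{\alpha_{s_1}} \circ \ev_{\vecG,1},
\qquad \ev_{\vecG,1}\colon \ovcM_\vecG \to \XLV_{\alpha_{s_1}}.
\]
Applying the projection formula to $\iota_{\alpha_{s_1}}$, I can rewrite
\[
\Cont(\vecG) \;=\; \iota_{\alpha_{s_1}*}\!\left( \frac{1}{|\Aut(\vecG)|}\, {\ev_{\vecG,1}}_* \left[ i_\vecG^*\!\left( \frac{\prod_{i=2}^{n+1}\ev_i^*\bt(\psi_i)}{-z-\psi_1} \right) e_\TL(N^{\vir}_\vecG)^{-1} \cap [\ovcM_\vecG]^{\vir} \right] \right),
\]
so that $\Cont(\vecG)$ lies in the image of the pushforward $\iota_{\alpha_{s_1}*}$.

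Next I would invoke base change along the cartesian square associated to the two closed embeddings $\iota_\alpha\colon \XLV_\alpha \hookrightarrow \XLV$ and $\iota_{\alpha_{s_1}}\colon \XLV_{\alpha_{s_1}} \hookrightarrow \XLV$. Because $F_\sfL$ indexes the connected components of $\XLV^\TL$ (the analogue of the correspondence between $(X_\sfL)^\TL$ and $F_\sfL$ in the base case, extended fiberwise), distinct labels $\alpha \ne \alpha_{s_1}$ give disjoint fixed loci, so the fiber product $\XLV_\alpha \times_{\XLV} \XLV_{\alpha_{s_1}}$ is empty. Base change then yields $\iota_\alpha^* \iota_{\alpha_{s_1}*} = 0$, and therefore $\iota_\alpha^* \Cont(\vecG) = 0$.

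There is essentially no real obstacle: the only thing to verify carefully is that the factorization of $\ev_1 \circ i_\vecG$ through $\iota_{\alpha_{s_1}}$ is compatible with the push-pull formalism at the level of refined/virtual classes, but this is immediate from the fact that the fiber product is empty (so both sides of the base-change identity are trivially zero). The same argument will be used again in a less trivial form for the other graph types in Sections that follow.
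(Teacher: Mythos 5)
Your proposal is correct and follows essentially the same route as the paper's proof: factor $\ev_1 \circ i_\vecG$ through $\iota_{\alpha_{s_1}}$ using the commutative square, then use that $\XLV_\alpha$ and $\XLV_{\alpha_{s_1}}$ are disjoint connected components of the fixed locus to conclude $\iota_\alpha^*\iota_{\alpha_{s_1}*}=0$. One small terminological note: the step rewriting $\Cont(\vecG)$ as $\iota_{\alpha_{s_1}*}(\cdots)$ is just functoriality of pushforward, not the projection formula, but this does not affect the argument.
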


\begin{proof}
Let $\vecG\in\DG^{\alpha,0}_{0,n+1,\cD}$ and set $\beta = \alpha_{s_1}$.
Consider the following commutative diagram:
\[
\xymatrix{
\ovcM_\vecG\ar[r]^-{i_\vecG}\ar[rd]_-{\ev_{\vecG,1}}	&	\XLV_{0,n+1,\cD}\ar[r]^-{\ev_1}			&	\XLV							\\
									&	\XLV_\beta\ar@{^{(}->}[ru]_-{\iota_\beta}	&	\XLV_\alpha\ar@{^{(}->}[u]_-{\iota_\alpha}			
}
\]
From the diagram, it follows that $\iota_\alpha^*{\ev_1}_*{i_\vecG}_*=\iota_\alpha^*{\iota_\beta}_*{\ev_{\vecG,1}}_*$.
Since $\XLV_\alpha \cap \XLV_\beta = \emptyset$, we have $\iota_\alpha^*{\iota_\beta}_* = 0$. 
\end{proof}

In the following two subsections, we will compute the contributions of the graphs of type $(\alpha,1)$ and type $(\alpha,2)$ separately.

\subsection{Contribution of the $(\alpha,1)$-type graphs}
Let $\vecG=(\Gamma,\vecalpha,\veck,\veccD,\vecs)$ be a decorated graph of type $(\alpha,1)$ satisfying $|\adj(s_1)|=|\mrk(s_1)|=1$.
The graph $\vecG$ is decomposed into two trees, $\vecG_1$ and $\vecG_2 = \vecG^{\alpha,\beta}_{2,k}$.
Here $\beta$ denotes the label of the vertex adjacent to $m_1$, $k$ denotes the degree of the edge attached to $m_1$, and $\vecG^{\alpha,\beta}_{2,k}$ denotes the decorated tree determined by the following data:
\begin{itemize}
\item a $2$-marked tree consisting of two vertices $v_1$ and $v_2$, and one edge;
\item $\alpha_{v_1}=\alpha$ and $\alpha_{v_2}=\beta$;
\item a degree of the edge equals $k$;
\item $\cD_{v_i}=0$ for $i=1,2$;
\item $s_i=v_i$ for $i=1,2$.
\end{itemize}
The graph $\vecG_1\in\DG^\beta_{0,n+1,\cD-k\cdot\dab}$ is obtained by removing from $\vecG$ the vertex $m_1$ and the edge attached to $m_1$, and adding the first marking on the vertex which is linked to $m_1$ in $\vecG$.
Conversely, if we are given $\beta\in\adj(\alpha)$, $k\in\N$ and $\vecG_1\in\DG^\beta_{0,n+1.\cD}$, we can recover $\vecG\in\DG^{\alpha,1}_{0,n+1,\cD+k\cdot\dab}$ by connecting $\vecG^{\alpha,\beta}_{2,k}$ and $\vecG_1$ by clutching the second marking of $\vecG^{\alpha,\beta}_{2,k}$ with the first marking of $\vecG_1$.
Summarizing, we have the following bijections.
\begin{lemma}
There is a bijection$:$
\begin{align*}
\Phi_1\colon \coprod_{\beta\in\adj(\alpha)} \coprod_{k\in\N} \coprod_{n=0}^\infty \coprod_{\cD\in\Eff(\XLV)} \DG^\beta_{0,n+1,\cD} \times \{ \vecG^{\alpha,\beta}_{2,k} \} \stackrel{\sim}{\longrightarrow} \coprod_{ \substack{n\geq0, \cD\in\Eff(\XLV) \\ (n,\cD)\neq(0,0),(1,0)} } \DG^{\alpha,1}_{0,n+1,\cD}.
\end{align*}
\end{lemma}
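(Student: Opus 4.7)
The plan is to write down the maps in both directions explicitly and verify they are mutually inverse, together with the required degree bookkeeping.

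First I will define $\Phi_1$. Given $\beta \in \adj(\alpha)$, $k \in \N$, and $\vecG_1 = (\Gamma_1, \vecalpha_1, \veck_1, \veccD_1, \vecs_1) \in \DG^\beta_{0,n+1,\cD}$, let $v_\star \in V(\Gamma_1)$ be the vertex carrying the first marking (so $\alpha_{v_\star} = \beta$). Form the graph $\Gamma$ by adjoining a new vertex $m$ together with a new edge $e$ joining $m$ to $v_\star$, labelling $m$ by $\alpha$, giving $e$ edge-degree $k$, setting $\cD_m = 0$, and relocating the first marking from $v_\star$ to $m$. All other vertex labels, vertex-degrees, edge-degrees, and markings $\{2,\dots,n+1\}$ are inherited from $\vecG_1$. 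This produces an $(n+1)$-pointed $(\sfL,\vecV)$-decorated tree $\vecG$; by construction the marked vertex $m = s_1$ satisfies $\alpha_{s_1} = \alpha$, $|\adj(s_1)| = 1$, $|\mrk(s_1)| = 1$, and $\cD_{s_1} = 0$, so $\vecG \in \DG^{\alpha,1}_{0,n+1,\cD+k\cdot\dab}$. Define $\Phi_1(\vecG_1, \vecG^{\alpha,\beta}_{2,k}) := \vecG$.

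Next I will construct the inverse. Given $\vecG \in \DG^{\alpha,1}_{0,n+1,\cD'}$, the vertex $s_1$ has $\alpha_{s_1} = \alpha$, $\val(s_1) = 2$, and $\cD_{s_1} = 0$, so it carries exactly one edge $e$ and exactly one marking (the first one). Let $v_\star$ be the unique vertex adjacent to $s_1$ via $e$; set $\beta := \alpha_{v_\star} \in F_\sfL$ and $k := k_e \in \N$. Since $e$ joins adjacent labels, $\beta \in \adj(\alpha)$, and we recover $\vecG^{\alpha,\beta}_{2,k}$ from $(e, s_1, v_\star)$. Deleting $s_1$ and $e$ from $\vecG$ and moving the first marking to $v_\star$ yields a decorated tree $\vecG_1$ of degree $\cD' - k\cdot\dab$ with $\alpha_{s_1}(\vecG_1) = \beta$, hence $\vecG_1 \in \DG^\beta_{0,n+1,\cD' - k\cdot\dab}$. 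Define $\Psi_1(\vecG) := (\vecG_1, \vecG^{\alpha,\beta}_{2,k})$.

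Finally I will verify $\Phi_1$ and $\Psi_1$ are mutual inverses. This amounts to checking that attaching a valence-two stub and then detaching it (and vice versa) returns the original datum, which is straightforward once one observes that (i) in the definition of $\DG^{\alpha,1}$ the distinguished vertex $s_1$ is unambiguously identified by the conditions $\alpha_{s_1} = \alpha$, $\val(s_1)=2$, $|\mrk(s_1)|=1$, and $\cD_{s_1}=0$, and (ii) the data $(\beta, k)$ read off from $\vecG$ are exactly those used to build $\vecG^{\alpha,\beta}_{2,k}$. The degree identity $\deg(\vecG) = \deg(\vecG_1) + k\cdot\dab$ follows directly from $\deg(\vecG) = \sum_e k_e d_e + \sum_v \cD_v$ and the fact that the only contributions removed in passing to $\vecG_1$ are the edge $e$ (contributing $k\cdot\dab$) and $\cD_{s_1} = 0$. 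The exclusion $(n,\cD) \neq (0,0), (1,0)$ on the target side is automatic: $\vecG$ contains at least the stub, so it has at least one edge, hence $\cD \neq 0$ whenever $n = 0$, and $\cD = k\cdot\dab$ when $n = 0$ is accounted for by $\vecG_1 \in \DG^\beta_{0,1,0}$ (a single marked vertex of zero degree), which is allowed on the source side.

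No serious obstacle is expected; the statement is purely combinatorial and the only care needed is in the bookkeeping of the marking relocation and in checking that the type conditions (membership in $\DG^\beta$ vs.\ $\DG^{\alpha,1}$) are preserved under both maps.
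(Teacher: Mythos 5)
Your proof follows the same attach/detach-a-stub construction as the paper (which in fact only sketches the two maps and asserts bijectivity), so the overall strategy is fine. However, there is a small but genuine gap in the step where you construct the inverse $\Psi_1$.

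You write: ``Given $\vecG\in\DG^{\alpha,1}_{0,n+1,\cD'}$, the vertex $s_1$ has $\alpha_{s_1}=\alpha$, $\val(s_1)=2$, and $\cD_{s_1}=0$, so it carries exactly one edge $e$ and exactly one marking.'' This implication is not automatic. Since $\val(s_1)=|\adj(s_1)|+|\mrk(s_1)|=2$ and $s_1$ carries marking $1$, the split $(|\adj(s_1)|,|\mrk(s_1)|)$ could a priori be either $(1,1)$ or $(0,2)$. In the $(0,2)$ case $s_1$ has no edges, so by connectedness $\Gamma$ is the single vertex $s_1$; then all $n+1$ markings sit on $s_1$, forcing $n=1$, and the total degree is $\cD_{s_1}=0$, i.e.\ $(n,\cD)=(1,0)$. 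That degenerate graph genuinely lies in $\DG^{\alpha,1}_{0,2,0}$ and has no preimage under $\Phi_1$, so $\Psi_1$ would not be defined on it. The hypothesis $(n,\cD)\neq(1,0)$ on the target side is precisely what rules this case out and makes $\Psi_1$ well-defined; you need to invoke it at this point. Your closing paragraph on the exclusions only discusses $(n,\cD)=(0,0)$ (noting that the image of $\Phi_1$ has at least one edge, so $\cD\neq 0$ whenever $n=0$, which incidentally also shows $\DG^{\alpha,1}_{0,1,0}=\emptyset$ so that exclusion is vacuous), and moreover it characterizes the exclusions as being automatically satisfied by the image of $\Phi_1$, when in fact $(n,\cD)\neq(1,0)$ is a constraint that must be imposed on the target to make $\Phi_1$ surjective. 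Once you add the observation that $(n,\cD)\neq(1,0)$ forces $|\adj(s_1)|=|\mrk(s_1)|=1$ (which is exactly the restriction the paper itself imposes without comment), the rest of your argument goes through as written.
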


We let $\vecG\in\DG^{\alpha,1}_{0,n+1,\cD}$ and write as $\Phi_1(\vecG)=(\vecG_1,\vecG_2=\vecG^{\alpha,\beta}_{2,k})$.
By definition we have $\ovcM_{\vecG_2} = \ovcM^\ab_k$ for some $\beta\in\adj(\alpha)$ and $k\in\N$.
We denote the canonical morphism $\ovcM^\ab_k\to\XLV_\acb$ by $\pi$.
There exist two natural morphisms
\[
\ovcM_\vecG\to\ovcM_{\vecG_1} \hspace{30pt}\text{and}\hspace{30pt} \ovcM_\vecG\to\ovcM_{\vecG_2}
\]
which are denoted by $\pr_1$ and $\pr_2$ respectively.
By definition, $\ovcM_\vecG$ fits into the following diagram
\begin{equation}
\label{eqn:decomp1}
\xymatrix{
\ovcM_\vecG\ar[rr]^{\pr_2}	\ar[d]^{\pr_1}&&	\ovcM^\ab_k\ar[d]^{\ev_{\vecG_2,2}}\ar[rr]^\pi	&&	\XLV_\acb\ar[dll]^{\ \ \pabb}	\\
\ovcM_{\vecG_1}\ar[rr]^{\ev_{\vecG_1,1}}	&&	\XLV_\beta	&&
}
\end{equation}
where the square in the diagram is Cartesian.

\begin{lemma}
\label{lem:eqns1}
Let $\vecG\in\DG^{\alpha,1}_{0,n+1,\cD}$ and let $(\vecG_1,\vecG_2=\vecG^\ab_{2,k})$ be as above.
\begin{itemize}
\item[$(1)$] 
$|\Aut(\vecG)| = |\Aut(\vecG_1)|$.
\item[$(2)$]
$i_\vecG^*\psi_1 = -\pr_2^*\pi^*c^\TL_1(L_{\alpha,\beta})/k$.
\item[$(3)$]
$i_\vecG^*\psi_j = \pr_1^*i_{\vecG_1}^*\psi_j$ for $2\leq j\leq n+1$.
\end{itemize}
\end{lemma}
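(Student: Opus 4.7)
The proof rests on the decomposition $\vecG = \vecG_1 \cup \vecG_2$ with $\vecG_2 = \vecG^{\alpha,\beta}_{2,k}$, and on the Cartesian square appearing in diagram \eqref{eqn:decomp1}. For (1), any automorphism of $\vecG$ must preserve the marking map $\vecs$, so it fixes $s_1$. Since $|\adj(s_1)| = 1$, the unique edge at $s_1$ and its other endpoint are also fixed, so the automorphism restricts to separate automorphisms of $\vecG_1$ and $\vecG_2$. Conversely, any automorphism of $\vecG_1$ extends to $\vecG$ by acting trivially on $\vecG_2$. Hence $\Aut(\vecG) \cong \Aut(\vecG_1) \times \Aut(\vecG_2)$. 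The second factor is trivial because $\vecG^{\alpha,\beta}_{2,k}$ has two vertices carrying the distinct labels $\alpha \neq \beta$ and two markings sitting on distinct vertices.

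For (3), the $j$-th marking ($j \geq 2$) lies on an irreducible component of the universal curve that belongs to the sub-tree $\Gamma_1 \subset \Gamma$, and this component is already carried by the universal curve over $\ovcM_{\vecG_1}$. Since the cotangent line bundle at the $j$-th marking on $\XLV_{0,n+1,\cD}$ restricts, along $i_\vecG$, to the cotangent line bundle at the $j$-th section of the universal curve over $\ovcM_\vecG$, and the latter is pulled back via $\pr_1$ from the corresponding cotangent line on $\ovcM_{\vecG_1}$, the identity $i_\vecG^* \psi_j = \pr_1^* i_{\vecG_1}^* \psi_j$ follows.

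The main work is in (2). Because $\val(s_1) = 2$ and $\cD_{s_1} = 0$, the vertex-component $C_{s_1}$ is unstable and contracted to a node, so the first marking of the stable map in $\ovcM_\vecG$ coincides with the endpoint of the edge component $C_e$ lying in $\XLV_\alpha$. Via $\pr_2$, this identifies $i_\vecG^* \psi_1$ with $\pr_2^* \psi_1^{\,\ovcM^{\alpha,\beta}_k}$. To compute the latter, I would use the presentation $\ovcM^{\alpha,\beta}_k = [L^0_{\alpha,\beta}/\C^\times]$ with $\C^\times$ acting by weight $k$, and the universal curve $[L^0_{\alpha,\beta} \times \PP^1 / \C^\times]$. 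The $\C^\times$-action on the $\PP^1$ factor is determined by the requirement that the universal map $F$ of Section \ref{subsec:fixed_locus} descend to the quotient stack: from $F_{i_{\alpha,\beta}}(v,s,x,y) = s y^k v_{i_{\alpha,\beta}}$ one reads off that $t \cdot [x,y] = [x, t^{-1}y]$. The first marking is the $\T$-fixed point $[1,0]$, and its cotangent weight under $\C^\times$ equals $-1$. Writing $M$ for the tautological line bundle on $[L^0_{\alpha,\beta}/\C^\times]$ (weight $+1$), we have $M^{\otimes k} = \pi^* L_{\alpha,\beta}$, hence $c_1(M) = \pi^* c_1^\TL(L_{\alpha,\beta})/k$; the cotangent line at the first marking is $M^\vee$, yielding $\psi_1^{\,\ovcM^{\alpha,\beta}_k} = -\pi^* c_1^\TL(L_{\alpha,\beta})/k$ as claimed.

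The only delicate step is the weight computation in (2): one must reconcile the auxiliary $\C^\times$ gauge used to present the root stack with the $\TL$-equivariant Chern class on $\XLV_{\alpha\cup\beta}$, and the characteristic $1/k$ factor must be traced through the root-stack construction. The sign and denominator arise naturally once the $\C^\times$-weights on the $\PP^1$ factor and on $L_{\alpha,\beta}$ are correctly pinned down.
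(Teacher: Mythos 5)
Your proof is correct and follows essentially the same route as the paper: part (1) by inspecting automorphisms, part (3) by observing that the universal cotangent line at the $j$-th marking ($j\geq 2$) is pulled back along $\pr_1$, and part (2) by reducing via $\pr_2$ to a computation on $\ovcM^{\alpha,\beta}_k$. The only difference is that where the paper simply exhibits the fiber diagram $\left(i_{\vecG_2}^*\cL''_1\right)^{\otimes(-k)}\cong\pi^*L_{\alpha,\beta}$ without proof, you derive it explicitly from the root-stack presentation $[L^0_{\alpha,\beta}/\C^\times]$ and the $\C^\times$-weight of the cotangent line at the marked fixed point of the universal $\PP^1$, which fills in the one step the paper leaves implicit.
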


\begin{proof}
The first equality is obvious.
For $1\leq i\leq n+1$, let $\cL_i$ (resp. $\cL'_i$, $\cL''_i$) denote the $i$-th universal cotangent line bundle over $\XLV_{0,n+1,\cD}$ (resp. $\XLV_{0,n+1,\cD-k\cdot\dab}$, $\XLV_{0,2,k\cdot\dab}$).
It can be seen that there exist the following fiber diagrams:
\[
\xymatrix@!C{
i_\vecG^*\cL_1\ar[r]\ar[d]	&	i_{\vecG_2}^*\cL''_1\ar[d]	&	\left(i_{\vecG_2}^*\cL''_1\right)^{\otimes (-k)}\ar[r]\ar[d]	&	L_\ab\ar[d]	\\
\ovcM_\vecG\ar[r]^-{\pr_2}	&	\ovcM^\ab_k				&	\ovcM^\ab_k\ar[r]^-{\pi}				&	\XLV_\acb
}
\]
These give the second formula.
The third one follows from the fiber diagram
\[
\xymatrix@!C{
i_\vecG^*\cL_i\ar[r]\ar[d]	&	i_{\vecG_1}^*\cL'_i\ar[d]	\\
\ovcM_\vecG\ar[r]^-{\pr_1}	&	\ovcM_{\vecG_1}
}
\]
for $i=2,\dots,n+1$.
\end{proof}

\begin{lemma}
\label{lem:proj_formula1}
Let $\vecG$ and $(\vecG_1,\vecG_2)$ be as in Lemma \ref{lem:eqns1}. 
For $\omega\in H^*(\ovcM_{\vecG_1})$ and $\eta\in H^*(\XLV_\acb)$, it holds that 
\begin{multline*}
\iota_\alpha^*{\ev_1}_*{i_\vecG}_* \left[ \left( \pr_1^*\omega \cdot \pr_2^*\pi^*\eta \right) \cap \left[\ovcM_\vecG\right]^{\vir} \right] \\
= \frac{1}{k} \cdot e_\TL\left(N_\alpha\right) \cdot {\paba}_* \left[ \eta \cdot \pabb^*\left( e_\TL\left(N_\beta\right)^{-1} \cdot \iota_\beta^* {\ev_1}_* {i_{\vecG_1}}_* \left( \omega \cap \left[\ovcM_{\vecG_1}\right]^{\vir} \right) \right) \right].
\end{multline*}
\end{lemma}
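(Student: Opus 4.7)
The plan is to unravel both sides into a common expression by repeatedly applying the self-intersection formula, the projection formula, and flat base change along the Cartesian square in the diagram, together with the $\mu_k$-gerbe identity for $\pi\colon\ovcM^\ab_k\to\XLV_\acb$.

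First, I would observe that the first marking of any stable map parametrized by $\ovcM_\vecG$ lies on the $\alpha$-labeled contracted vertex $s_1$ of $\vecG_2$, whose image in $\XLV$ is precisely the $\alpha$-endpoint of the orbit parametrized by the corresponding point of $\ovcM^\ab_k$. This yields the factorization
\[
\ev_1\circ i_\vecG \;=\; \iota_\alpha\circ\paba\circ\pi\circ\pr_2,
\]
and applying $\iota_\alpha^*$ together with the self-intersection identity $\iota_\alpha^*(\iota_\alpha)_*(x)=e_\TL(N_\alpha)\cdot x$ extracts the factor $e_\TL(N_\alpha)$ from the left-hand side. Two applications of the projection formula, first for $\pr_2$ and then for $\pi$, pull $\eta$ outside the pushforward, reducing the task to the computation of $(\pi\circ\pr_2)_*\bigl(\pr_1^*\omega\cap[\ovcM_\vecG]^{\vir}\bigr)\in H^*_\TL(\XLV_\acb)$.

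Next, I would exploit the Cartesian square on the left of the diagram. The composite $\ev_{\vecG_2,2}=\pabb\circ\pi$ is smooth, being the composition of the smooth projective-bundle projection $\pabb$ with the $\mu_k$-gerbe $\pi$; hence $\pr_1$ is smooth as well. Moreover, the decomposition \eqref{eqn:fiber_aut_def_ob} of the tangent-obstruction complex shows that the perfect obstruction theory on the fixed locus $\ovcM_\vecG$ is inherited entirely from $\vecG_1$ (the $\vecG_2$-contribution being smooth and unobstructed), so that $[\ovcM_\vecG]^{\vir}=\pr_1^![\ovcM_{\vecG_1}]^{\vir}$. Flat base change then delivers
\[
\pr_{2\,*}\bigl(\pr_1^*\omega\cap[\ovcM_\vecG]^{\vir}\bigr)=\ev_{\vecG_2,2}^*\bigl(\ev_{\vecG_1,1\,*}(\omega\cap[\ovcM_{\vecG_1}]^{\vir})\bigr),
\]
and pushing further by $\pi$, using the factorization $\ev_{\vecG_2,2}^*=\pi^*\pabb^*$ and the $\mu_k$-gerbe identity $\pi_*\pi^*=\tfrac{1}{k}\cdot\id$, produces the prefactor $\tfrac{1}{k}$ together with the class $\pabb^*\ev_{\vecG_1,1\,*}(\omega\cap[\ovcM_{\vecG_1}]^{\vir})$.

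Finally, I would translate $\ev_{\vecG_1,1\,*}(\omega\cap[\ovcM_{\vecG_1}]^{\vir})\in H^*_\TL(\XLV_\beta)$ into the $\iota_\beta^*{\ev_1}_*{i_{\vecG_1}}_*$-form appearing on the right-hand side. Since $\ev_1\circ i_{\vecG_1}=\iota_\beta\circ\ev_{\vecG_1,1}$, self-intersection along $\iota_\beta$ gives
\[
\ev_{\vecG_1,1\,*}\bigl(\omega\cap[\ovcM_{\vecG_1}]^{\vir}\bigr)=e_\TL(N_\beta)^{-1}\cdot\iota_\beta^*\,{\ev_1}_*\,{i_{\vecG_1}}_*\bigl(\omega\cap[\ovcM_{\vecG_1}]^{\vir}\bigr),
\]
and reassembling all the pieces produces the stated formula. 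The principal technical obstacle is establishing the virtual-class compatibility $[\ovcM_\vecG]^{\vir}=\pr_1^![\ovcM_{\vecG_1}]^{\vir}$ for the fiber product $\ovcM_\vecG=\ovcM_{\vecG_1}\times_{\XLV_\beta}\ovcM^\ab_k$; once this identification of perfect obstruction theories along the smooth base change is in place, everything else reduces to a routine chain of pushforward identities.
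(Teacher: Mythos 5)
Your proof is correct and follows essentially the same chain of identities as the paper: the factorization $\ev_1\circ i_\vecG=\iota_\alpha\circ\paba\circ\pi\circ\pr_2$, self-intersection along $\iota_\alpha$ and $\iota_\beta$, the projection formula, and proper base change for the Cartesian square in \eqref{eqn:decomp1}. The one place where you genuinely diverge is the accounting of the $\tfrac1k$. You extract it from the $\mu_k$-gerbe identity $\pi_*\pi^*=\tfrac1k\,\id$ together with the flat-pullback compatibility $[\ovcM_\vecG]^{\vir}=\pr_1^*[\ovcM_{\vecG_1}]^{\vir}$ (which holds because $\pr_1$ is the base change of the smooth morphism $\ev_{\vecG_2,2}=\pabb\circ\pi$). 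The paper instead asserts $\pr_1^*[\ovcM_{\vecG_1}]^{\vir}=k\cdot[\ovcM_\vecG]^{\vir}$ and leaves the gerbe pushforward implicit. These two accounts are not simultaneously compatible (combining them would overshoot to $\tfrac1{k^2}$); since $\pr_1$ is a flat base change and $\ovcM^\ab_k$ is a genuine $\mu_k$-gerbe, your bookkeeping is the standard one, and a quick sanity check with $B=\pt$, $\XLV=\PP^1$, $\ovcM_\vecG=B\mu_k$, $\ovcM_{\vecG_1}=\pt$ confirms that $\pr_1^*[\ovcM_{\vecG_1}]^{\vir}=[\ovcM_\vecG]^{\vir}$ and the $\tfrac1k$ is supplied by $\pi_*\pi^*$. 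So your version of the argument is the cleaner one; aside from this redistribution of the factor, the route is identical. One small point worth tightening if you were to write this up: the identification $[\ovcM_\vecG]^{\vir}=\pr_1^![\ovcM_{\vecG_1}]^{\vir}$ should be justified from the \emph{fixed} part of the tangent--obstruction complex being pulled back from $\ovcM_{\vecG_1}$ (the ghost vertex $s_1$ and the rigid edge contribute nothing to the fixed part), not from the moving-part decomposition \eqref{eqn:fiber_aut_def_ob}, which only governs $N^{\vir}_\vecG$.
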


\begin{proof}
Since $\ev_1\circ i_\vecG = \iota_\alpha\circ\paba\circ\pi\circ\pr_2$ and $\iota_\alpha^*{\iota_\alpha}_* = e_\TL(N_\alpha)$, we have
\begin{align*}
\iota_\alpha^*{\ev_1}_*{i_\vecG}_* &= \iota_\alpha^* {\iota_\alpha}_* {\paba}_* \pi_* {\pr_2}_*	\\
&= e_\TL\left(N_\alpha\right) \cdot {\paba}_* \pi_* {\pr_2}_*. 
\end{align*}
From the diagram \eqref{eqn:decomp1} it follows that
\[
{\pr_2}_* \pr_1^* = \pi^* \pabb^* {\ev_{\vecG_1,1}}_*.
\]
Since $\ev_1\circ i_{\vecG_1} = \iota_\beta\circ\ev_{\vecG_1,1}$ and $\iota_\beta^*{\iota_\beta}_* = e_\TL(N_\beta)$, we have
\begin{align*}
{\ev_{\vecG_1,1}}_* &= e_\TL\left(N_\beta\right)^{-1} \cdot \iota_\beta^* {\iota_\beta}_* {\ev_{\vecG_1,1}}_*	\\
&= e_\TL\left(N_\beta\right)^{-1} \cdot \iota_\beta^* {\ev_1}_* {i_{\vecG_1}}_*.
\end{align*}
Finally, from the construction we have
\[
\pr_1^* \left[\ovcM_{\vecG_1}\right]^{\vir} = k \cdot \left[\ovcM_\vecG\right]^{\vir}.
\]
The desired equality follows from these equations and the projection formula.
\end{proof}

\begin{lemma}
\label{lem:vir_normal1}
Let $\vecG\in\DG^{\alpha,1}_{0,n+1,\cD}$ and let $(\vecG_1,\vecG_2)$ be as in Lemma \ref{lem:eqns1}.
Assume that $\vecG\neq\vecG^\ab_{1,k}, \vecG^\ab_{2,k}$.
Then the following equality holds$:$
\[
\frac{\pr_1^*e_\TL(N^{\vir}_{\vecG_1})}{e_\TL(N^{\vir}_\vecG)} = \frac{k}{-\pr_2^*\pi^*c_1^\TL(L_{\alpha,\beta})-k\pr_1^*i_{\vecG_1}^*\psi_1} \cdot \pr_2^* \pi^* \left[ \frac{p_{\alpha\cup\beta,\beta}^*e_\TL(N_\beta)}{p_{\alpha\cup\beta,\alpha}^*e_\TL(N_\alpha)} \cdot C_{\alpha,\beta}(k) \right].
\]
Here we use the notation $C_\ab(k)$ introduced in Theorem \ref{thm:characterization}.
\end{lemma}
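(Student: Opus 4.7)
The plan is to compare the Euler classes of $N^{\vir}_\vecG$ and $\pr_1^* N^{\vir}_{\vecG_1}$ term by term using the general formula \eqref{eqn:vir_normal} together with the explicit descriptions in \eqref{eqn:fiber_aut_def_ob}. I would first note that the extra vertex $s_1$ in $\vecG$ satisfies $\val(s_1)=2$, $|\mrk(s_1)|=1$, $\cD_{s_1}=0$, and hence fails every condition appearing in the index sets of \eqref{eqn:fiber_aut_def_ob} (each demands $|\mrk|=0$, or $\val\geq 3$, or $\cD_v\neq 0$), so $s_1$ contributes nothing to $N^{\vir}_\vecG$. Moreover, at the ``gluing'' vertex $v'$ adjacent to $s_1$ in $\vecG$, both $\val(v')$ and $\cD_{v'}$ are unchanged on passing from $\vecG$ to $\vecG_1$: the new edge $e$ raises $|\adj(v')|$ by one while the first marking migrates from $s_1$ to $v'$, lowering $|\mrk(v')|$ by one. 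Hence the stable-vertex contributions at $v'$ and all contributions from flags $(e_{\vecG_1},v')$ with $e_{\vecG_1}$ an edge of $\vecG_1$ match those for $\vecG_1$ and cancel in the ratio, as do all contributions from vertices and edges lying strictly inside $\vecG_1$. (The exclusion of $\vecG=\vecG^\ab_{1,k},\vecG^\ab_{2,k}$ rules out precisely the boundary cases where $|\mrk|$-dependent sums would produce additional terms at $v'$.)

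The non-cancelling pieces in $\pr_1^* e_\TL(N^{\vir}_{\vecG_1})/e_\TL(N^{\vir}_\vecG)$ then come from three sources attached to the new edge $e$ and the new flag $(e,v')$. First, the node-smoothing term $c_1(\cL^\vee_{v',e}\otimes \cL^\vee_{e,v'})$ appears in the numerator of $e_\TL(\Def(C,\bp)_\vecG^{\mov})$: by Lemma \ref{lem:eqns1}(2) and its analogue at the second marking of $\vecG_2=\vecG^\ab_{2,k}$ (where $(\cL''_2)^{\otimes(-k)}\cong L_{\beta,\alpha}$, combined with the identity $c_1^\TL(L_{\beta,\alpha})=-c_1^\TL(L_\ab)$ on $\XLV_\acb$ that follows from Lemma \ref{lem:dab_comparison}(2)--(3) using $D_{i_{\beta,\alpha}}(\dab)=1$), this class equals $-\pr_1^*i_{\vecG_1}^*\psi_1-\pr_2^*\pi^*c_1^\TL(L_\ab)/k$, whose reciprocal produces exactly the factor $k/(-\pr_2^*\pi^*c_1^\TL(L_\ab)-k\pr_1^*i_{\vecG_1}^*\psi_1)$ of the stated formula. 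Second, the flag $(e,v')$ contributes $-\pr_e^*\ev_{e,v'}^* T_\XLV$ through the last subtraction in $\Def(f)\ominus\Ob(f)$; using the $\T$-equivariant splitting $\iota_\beta^* T_\XLV=T_{\XLV_\beta}\oplus N_\beta$ this will supply the factor $\pr_2^*\pi^*\pabb^*e_\TL(N_\beta)$. Third, the edge $e$ itself contributes $e_\TL(\pr_e^*\R g_{e*}F_e^* T_\XLV)$.

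The principal technical step is therefore the computation of $e_\TL(\R g_{e*}F_e^* T_\XLV)$ on $\ovcM^\ab_k=[L^0_\ab/\C^\times]$, carried out using the explicit universal map $F_e$ presented at the end of Section \ref{subsec:fixed_locus}. Pulling back the $\T$-equivariant decomposition $T_\XLV|_{\XLV_{\alpha,\beta}}=\bigoplus_{i=1}^N(V_i\otimes L_i)|_{\XLV_{\alpha,\beta}}$ along $F_e$ splits $F_e^* T_\XLV$ into a direct sum on $L^0_\ab\times\PP^1$, each summand of which is, over $\PP^1$, a twist of a line bundle of degree $k\cdot u_i(\dab)$ with a definite $\C^\times$-weight read off from $\lambda_i$ and the standard scaling of $\PP^1$. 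Applying $\R g_{e*}$ summand by summand and shifting by the Chern roots $\delta$ of $V_i$ yields, for each $\delta$, a product of linear factors $\delta+\paba^*\iota_\alpha^*u_i-(c/k)c_1^\TL(L_\ab)$ for $c=1,\dots,k\cdot u_i(\dab)$, matching precisely the factors appearing in $C_\ab(k)^{-1}$ (with the direction $i=i_\ab$ receiving the separate treatment written into its definition because the $\PP^1$-tangent mixes with $V_{i_\ab}$, so the range is truncated to $c=1,\dots,k-1$). Combining this output with the $\iota_\beta^* T_\XLV$ factor from the flag $(e,v')$ cancels the $c=0$ terms, leaving $\paba^*e_\TL(N_\alpha)$ in the denominator and $\pabb^*e_\TL(N_\beta)$ in the numerator, and produces the asserted formula. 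The main obstacle is bookkeeping: keeping track of $\C^\times$-weights during the pushforward, identifying which summands contribute to $H^0$ versus $H^1$ so that the correct range for $c$ emerges, and carefully handling the tangent direction to the one-dimensional orbit.
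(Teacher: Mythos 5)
Your proposal takes essentially the same route as the paper: split the ratio via \eqref{eqn:vir_normal} and \eqref{eqn:fiber_aut_def_ob}, read off the node-smoothing factor at the new edge to get $k/(-\pr_2^*\pi^*c_1^\TL(L_\ab)-k\pr_1^*i_{\vecG_1}^*\psi_1)$, and compute the edge contribution $\R g_{e*}F_e^*T_{\XLV}$ via the explicit universal map over $\ovcM^\ab_k$ together with the $\TL$-equivariant filtration of $T_{\XLV}$ by the $V_i\otimes L_i$, with the $i=i_\ab$ direction treated separately. One small inaccuracy in your final step: the $c=0$ factors from the edge pushforward for $i\notin\alpha$ do not cancel against the flag contribution $\pabb^*e_\TL(N_\beta)$ (which sits at the $\beta$-end); rather they produce $\paba^*e_\TL(N_\alpha)$, which survives in the denominator and combines with the flag's $\pabb^*e_\TL(N_\beta)$ in the numerator to give the factor $\pabb^*e_\TL(N_\beta)/\paba^*e_\TL(N_\alpha)$ in the stated formula.
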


\begin{proof}
Thanks to the formula \eqref{eqn:vir_normal}, we can divide the left-hand side into three parts:
\begin{equation}
\label{eqn:three_parts1}
\begin{split}
\frac{\pr_1^*e_\TL(N^{\vir}_{\vecG_1})}{e_\TL(N^{\vir}_\vecG)} =&\ \frac{e_\TL(\Aut(C,\bp)^{\mov}_\vecG)}{\pr_1^*e_\TL(\Aut(C,\bp)^{\mov}_{\vecG_1})} \cdot \frac{\pr_1^*e_\TL(\Def(C,\bp)^{\mov}_{\vecG_1})}{e_\TL(\Def(C,\bp)^{\mov}_\vecG)}	\\
\cdot&\ \frac{\pr_1^*e_\TL(\Def(f)^{\mov}_{\vecG_1} \ominus \Ob(f)^{\mov}_{\vecG_1})}{e_\TL(\Def(f)^{\mov}_\vecG \ominus \Ob(f)^{\mov}_\vecG)}.
\end{split}
\end{equation}
We will compute them separately.

From \eqref{eqn:fiber_aut_def_ob}, we can see that $\Aut(C,\bp)^{\mov}_\vecG \cong \pr_1^*\Aut(C,\bp)^{\mov}_{\vecG_1}$ and 
\[
\Def(C,\bp)^{\mov}_\vecG \ominus \pr_1^*\Def(C,\bp)^{\mov}_{\vecG_1} = \pr_2^*i_{\vecG_2}^*{\cL''_2}^\vee \otimes \pr_1^*i_{\vecG_1}^*{\cL'_1}^\vee.
\]
(We use the notation in the proof of Lemma \ref{lem:eqns1}.)
Hence the first part of \eqref{eqn:three_parts1} equals $1$, and the second one is equal to
\[
\frac{1}{-\pr_2^*i_{\vecG_2}^*\psi_1 - \pr_1^*i_{\vecG_1}^*\psi_1} = \frac{k}{-\pr_2^*\pi^*c_1^\TL(L_{\alpha,\beta})-k\pr_1^*i_{\vecG_1}^*\psi_1}.
\]
Here we use Lemma \ref{lem:eqns1} (2).

Let $v\in V(\Gamma)$ be the vertex adjacent to $m_1$, and write the unique edge joining $m_1$ and $v$ as $e\in E(\Gamma)$.
We write the universal curve over $\ovcM_{\vecG_2} = \ovcM^\ab_k$ as follows:
\[
\xymatrix{
\cC^{\alpha,\beta}_k \ar[r]^-F \ar[d]_-g & \XLV \\
\ovcM^{\alpha,\beta}_k \ar@/^24pt/[u]^{s_1,\ s_2}
}
\]
Again from \eqref{eqn:fiber_aut_def_ob}, we have
\[
\left(\Def(f)_\vecG \ominus \Ob(f)_\vecG\right) \ominus \pr_1^* \left(\Def(f)_{\vecG_1} \ominus \Ob(f)_{\vecG_1}\right) = \pr_2^* \R g_* F^* T_{\XLV} \ominus \pr_2^*s_2^* F^* T_{\XLV}
\]
where $\R g_*$ denotes the $K$-theoretic pushforward.
Since we have the exact sequences
\[
\xymatrix@R=0pt@C=18pt{
0\ar[r]	&	T_{\XLV/B}\ar[r]		&	T_{\XLV}\ar[r]						&	p^*T_B\ar[r]		&	0,	\\
0\ar[r]	&	\cO^{\oplus K}\ar[r]	&	\bigoplus_{i=1}^N (V_i\otimes L_i)\ar[r]	&	T_{\XLV/B}\ar[r]	&	0,	
}
\]
and $\TL$ acts trivially on $p^*T_B$ and $\cO$, we have
\[
\left(\R g_* F^* T_{\XLV}\right)^{\mov} = \bigoplus_{i=1}^N \left(\R g_* F^* (V_i\otimes L_i)\right)^{\mov}.
\]
We set
\begin{align*}
\cU_\alpha &= \cC^{\alpha,\beta}_k\setminus \text{Im}(s_2), 		&g_\alpha&\colon\cU_\alpha\hookrightarrow\cC^\ab_k\xrightarrow{g}\ovcM^\ab_k,			\\
\cU_\beta &= \cC^{\alpha,\beta}_k\setminus \text{Im}(s_1),		&g_\beta&\colon\cU_\beta\hookrightarrow\cC^\ab_k\xrightarrow{g}\ovcM^\ab_k,				\\
\cU_{\alpha\beta} &= \cU_\alpha\times_{\cC^\ab_k}\cU_\beta,	&g_{\alpha\beta}&\colon\cU_{\alpha\beta}\hookrightarrow\cC^\ab_k\xrightarrow{g}\ovcM^\ab_k.	
\end{align*}
Note that $\cU_\alpha \cong \cL^\vee_1|_{\ovcM^\ab_k}$ and $\cU_\beta \cong \cL^\vee_2|_{\ovcM^\ab_k}$, and $\cU=\{ \cU_\alpha, \cU_\beta \}$ is an open covering of $\cC^{\alpha,\beta}_k$.
It is easy to see that
\begin{align*}
{g_\alpha}_*(F|_{\cU_\alpha})^*(V_i\otimes L_i)	&=\pi^*p_\acb^*V_i\otimes\pi^*\paba^*\iota_\alpha^*L_i\otimes\bigoplus^\infty_{c=0}\left( \cL^\vee_1|_{\ovcM^\ab_k} \right)^{\otimes c},	\\
{g_\beta}_*(F|_{\cU_\beta})^*(V_i\otimes L_i)	&=\pi^*p_\acb^*V_i\otimes\pi^*\pabb^*\iota_\beta^*L_i\otimes\bigoplus^\infty_{c=0}\left( \cL^\vee_2|_{\ovcM^\ab_k} \right)^{\otimes c},	\\
{g_{\alpha\beta}}_*(F|_{\cU_{\alpha\beta}})^*(V_i\otimes L_i)	&=\pi^*p_\acb^*V_i\otimes\pi^*\pabb^*\iota_\beta^*L_i\otimes\bigoplus^\infty_{c=-\infty}\left( \cL^\vee_1|_{\ovcM^\ab_k} \right)^{\otimes c}.	\\
\end{align*}
Hence we have
\begin{align*}
&\ e_\TL\left(\R g_*F^*(V_i\otimes L_i)\right) \\
=&\ e_\TL\left({g_\alpha}_*(F|_{\cU_\alpha})^*(V_i\otimes L_i)\oplus{g_\beta}_*(F|_{\cU_\beta})^*(V_i\otimes L_i)\ominus{g_{\alpha\beta}}_*(F|_{\cU_{\alpha\beta}})^*(V_i\otimes L_i)\right)	\\
=&\ \pi^* \prod_{ \substack{\delta\colon\mathrm{Chern\ roots} \\ \mathrm{of\ } V_i} } \frac{\prod^{k\cdot u_i(\dab)}_{c=-\infty}}{\prod^{-1}_{c=-\infty}}\left( \delta + \paba^*\iota_\alpha^*u_i - \frac{c}{k}c_1(L_\ab) \right).
\end{align*}
The moving part can be described as 
\[
e_\TL\left( \bigoplus^N_{i=1} \left(\R g_*F^*(V_i\otimes L_i)\right)^{\mov} \right) = \pi^*\left[ \frac{\paba e_\TL(N_\alpha)}{C_\ab(k)} \right].
\]
On the other hand, we have
\begin{align*}
e_\TL\left( \left( s_2^*F^*T_{\XLV} \right)^{\mov} \right) 
&= e_\TL\left( \bigoplus^N_{i=1} s_2^*F^*(V_i\otimes L_i)^{\mov} \right)		\\
&= \pi^* \prod_{i\notin\beta} \left( \delta + \pabb^*\iota_\beta^*u_i \right)	\\
&= \pi^*\pabb^*e_\TL(N_\beta).
\end{align*}
These computations give the desired formula.
\end{proof}

By performing calculations similar to those in the previous proof, we can establish the following formulas.

\begin{lemma}
\label{lem:leading}
Let $\beta\in\adj(\alpha)$ and $k\in\N$.
We have
\begin{align*}
\iota_\alpha^*\Cont_{\vecG^{\alpha,\beta}_{1,k}}(z) &= {\paba}_* \left[ \frac{C_\ab(k)}{-kz+c_1^\TL(L_\ab)} \cdot \left( -\frac{c_1^\TL(L_\ab)}{k} \right) \right], \\
\iota_\alpha^*\Cont_{\vecG^{\alpha,\beta}_{2,k}}(z) &= {\paba}_* \left[ \frac{C_\ab(k)}{-kz+c_1^\TL(L_\ab)} \cdot \pabb^*\iota_\beta^*\bt\left( \frac{c_1^\TL(L_\ab)}{k} \right) \right].
\end{align*}
\end{lemma}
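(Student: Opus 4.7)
The plan is to specialize the machinery of Subsection \ref{subsubsec:DG} to the two decorated trees $\vecG^{\alpha,\beta}_{j,k}$ for $j=1,2$, each consisting of a single edge of degree $k$ joining a vertex $v_1$ labeled $\alpha$ to a vertex $v_2$ labeled $\beta$, with both vertex degrees equal to zero; the graph $\vecG^{\alpha,\beta}_{2,k}$ carries two markings, one at each vertex, while $\vecG^{\alpha,\beta}_{1,k}$ carries only the first marking on $v_1$. In both cases the underlying moduli stack is canonically identified with the root stack $\ovcM^{\alpha,\beta}_k = [L_{\alpha,\beta}^0/\C^\times]$ from Subsection \ref{subsec:fixed_locus}, the graph automorphism groups are trivial (since the two endpoint labels are distinct), and the canonical projection $\pi\colon\ovcM^{\alpha,\beta}_k\to\XLV_\acb$ is a $\mu_k$-gerbe, so $\pi_*[\ovcM^{\alpha,\beta}_k]^{\vir} = \frac{1}{k}[\XLV_\acb]$.

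First I would isolate the structural pieces common to the two computations. Since $\ev_1\circ i_\vecG = \iota_\alpha\circ\paba\circ\pi$, the self-intersection argument from the proof of Lemma \ref{lem:proj_formula1} gives $\iota_\alpha^*\ev_{1,*}i_{\vecG,*} = e_\TL(N_\alpha)\cdot\paba_*\pi_*$. Lemma \ref{lem:eqns1}\,(2) yields $i_\vecG^*\psi_1 = -\pi^*c_1^\TL(L_\ab)/k$, and the symmetric argument at the $v_2$-side node gives $i_\vecG^*\psi_2 = +\pi^*c_1^\TL(L_\ab)/k$; the sign flip reflects the opposite $\TL$-weights of the two tangent directions of the universal $\PP^1$ at the two $\mu_k$-orbipoints described at the end of Subsection \ref{subsec:fixed_locus}. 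Using \eqref{eqn:fiber_aut_def_ob} one checks that $\Def(C,\bp)^{\mov}_\vecG$ vanishes for both graphs, and that $\Def(f)_\vecG\ominus\Ob(f)_\vecG$ reduces to the single edge contribution $\pr_e^*\R g_{e,*}F_e^*T_\XLV$, whose moving Euler class equals $\pi^*\paba^*e_\TL(N_\alpha)/\pi^*C_\ab(k)$ by the computation already performed in the proof of Lemma \ref{lem:vir_normal1}.

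The two cases then diverge only through the vertex $v_2$. For $\vecG^{\alpha,\beta}_{2,k}$, $v_2$ carries a marking, so $\Aut(C,\bp)^{\mov}_\vecG$ vanishes by \eqref{eqn:fiber_aut_def_ob}; substituting $i_\vecG^*\ev_2^*\bt(\psi_2) = \pi^*\pabb^*\iota_\beta^*\bt(c_1^\TL(L_\ab)/k)$, applying the projection formula to cancel $e_\TL(N_\alpha)$ against $\paba^*e_\TL(N_\alpha)$, and collapsing $\pi_*$ using the gerbe factor $1/k$ yields the stated formula (the combined factor $\frac{1}{k(-z+c_1^\TL(L_\ab)/k)} = \frac{1}{-kz+c_1^\TL(L_\ab)}$ produces the denominator in the answer). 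For $\vecG^{\alpha,\beta}_{1,k}$, the unstable vertex $v_2$ satisfies $|\adj(v_2)|=1$, $|\mrk(v_2)|=0$ and $\cD_{v_2}=0$, so \eqref{eqn:fiber_aut_def_ob} gives $\Aut(C,\bp)^{\mov}_\vecG = \pr_e^*\cL^\vee_{e,v_2}$; this inserts an extra factor $e_\TL(\cL^\vee_{e,v_2}) = -\pi^*c_1^\TL(L_\ab)/k$ in the numerator of $e_\TL(N^{\vir}_\vecG)^{-1}$, and carrying this factor through the same pushforward and gerbe manipulations produces the stated formula.

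The main obstacle will be the careful verification of the sign convention $c_1^\TL(\cL_{e,v_2}) = +c_1^\TL(L_\ab)/k$ at the $v_2$-side $\mu_k$-orbipoint of the universal curve, which depends on correctly reading off the $\TL$-weights of the tangent directions of the universal $\PP^1$ over $\ovcM^{\alpha,\beta}_k$ from the description in Subsection \ref{subsec:fixed_locus}; the analogous sign at $v_1$ is already given by Lemma \ref{lem:eqns1}\,(2), but the second node must be handled by hand. Once this sign is pinned down, all remaining steps are routine projection-formula manipulations together with the edge-factor computation already carried out in the proof of Lemma \ref{lem:vir_normal1}.
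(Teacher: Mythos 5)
Your argument is correct and is exactly what the paper intends: the paper gives no explicit proof of this lemma, saying only that it follows ``by performing calculations similar to those in the previous proof'' (Lemma~\ref{lem:vir_normal1}), and your proposal carries out precisely those localization computations for the two minimal graphs. One small caution: in your closing paragraph you assert $c_1^\TL(\cL_{e,v_2})=+c_1^\TL(L_\ab)/k$ while speaking of ``tangent directions,'' but under the convention implicit in~\eqref{eqn:fiber_aut_def_ob} the bundles $\cL_{e,v}$ are \emph{cotangent} lines (so that $\cL^\vee_{e,v}$ is the tangent line, matching both the automorphism Lie algebra and the node-smoothing terms $\cL^\vee_{v,e}\otimes\cL^\vee_{e,v}$), and the tangent at the $\beta$-side pole in fact has weight $-c_1^\TL(L_\ab)/k$; this is consistent with your $i_\vecG^*\psi_2=+\pi^*c_1^\TL(L_\ab)/k$ and $e_\TL(\cL^\vee_{e,v_2})=-\pi^*c_1^\TL(L_\ab)/k$ and does not affect the final answer, but the phrase ``tangent directions'' could mislead a reader checking the sign.
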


Using the above lemmas, we can compute the contributions of the graphs of type $(\alpha,1)$.

\begin{proposition}
\label{prop:cont1}
\begin{multline*}
\sum_{ \substack{n\geq0, \cD\in\Eff(\XLV) \\ (n,\cD)\neq(0,0),(1,0)} } \frac{\cQ^\cD}{n!} \sum_{\vecG\in\DG^{\alpha,1}_{0,n+1,\cD}} \iota_\alpha^*\Cont(\vecG) \\
= \sum_{\beta\in\adj(\alpha)} \sum_{k\in\N} {p_{\alpha\cup\beta,\alpha}}_* \left[ q^{k\cdot \dab} \cdot \frac{C_\ab(k)}{-kz+c_1^\TL(L_\ab)} \cdot \pabb^*\iota_\beta^*\f\left( z=\frac{c_1^\TL(L_\ab)}{k} \right) \right]
\end{multline*}
\end{proposition}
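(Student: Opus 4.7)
The plan is to apply the bijection $\Phi_1$ to decompose each graph $\vecG\in\DG^{\alpha,1}_{0,n+1,\cD}$ as a pair $(\vecG_1,\vecG^{\alpha,\beta}_{2,k})$ with $\vecG_1\in\DG^\beta_{0,n+1,\cD-k\dab}$, then to use Lemmas \ref{lem:eqns1}, \ref{lem:vir_normal1} and \ref{lem:proj_formula1} in sequence to express $\iota_\alpha^*\Cont(\vecG)$ as a $\paba_*$-pushforward of a multiple of $\pabb^*\iota_\beta^*\Cont(\vecG_1)$ specialized at $z=c_1^\TL(L_\ab)/k$. Summing over $\vecG_1$ (extending $\DG^\beta$ to $\DG$ via Proposition \ref{prop:cont0}) and over $(n,\cD_1)$, while splitting $\cQ^\cD=\cQ^{\cD_1}\cdot q^{k\dab}$ under the identification of Section \ref{subsec:effective}, the inner sum will reconstruct $\iota_\beta^*\f(z')$ via \eqref{eqn:f_restricted}, evaluated at $z'=c_1^\TL(L_\ab)/k$. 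The two ``trivial'' terms $-\bone z'$ and $\iota_\beta^*\bt(z')$ in that expression are supplied precisely by the exceptional graphs $\vecG^{\alpha,\beta}_{1,k}$ and $\vecG^{\alpha,\beta}_{2,k}$, which correspond under $\Phi_1$ to the excluded $(n,\cD_1)=(0,0)$ and $(1,0)$, respectively, and are handled directly by Lemma \ref{lem:leading}.

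More concretely, for a generic $\vecG$ with $\Phi_1(\vecG)=(\vecG_1,\vecG^{\alpha,\beta}_{2,k})$, Lemma \ref{lem:eqns1}(2) turns $1/(-z-\psi_1)$ into $k/(-kz+\pr_2^*\pi^*c_1^\TL(L_\ab))$, while the denominator factor $(-\pr_2^*\pi^*c_1^\TL(L_\ab)-k\pr_1^*i_{\vecG_1}^*\psi_1)$ inherited from Lemma \ref{lem:vir_normal1} is exactly $-k$ times the pullback of the denominator $-z'-\psi_1$ of the integrand $\phi_{\vecG_1}(z')$ of $\Cont(\vecG_1)$, evaluated at $z'=c_1^\TL(L_\ab)/k$. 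After this cancellation (using Lemma \ref{lem:eqns1}(3) for $\psi_j$, $j\geq 2$), the integrand $\phi_\vecG(z)$ factors as
\begin{equation*}
\phi_\vecG(z) \;=\; \pr_1^*\phi_{\vecG_1}(c_1^\TL(L_\ab)/k)\cdot \pr_2^*\pi^*\eta_\ab(z), \qquad \eta_\ab(z)\;=\;\frac{k\cdot C_\ab(k)}{-kz+c_1^\TL(L_\ab)}\cdot\frac{\pabb^*e_\TL(N_\beta)}{\paba^*e_\TL(N_\alpha)}.
\end{equation*}
Lemma \ref{lem:proj_formula1} then expresses $\iota_\alpha^*{\ev_1}_*{i_\vecG}_*[\phi_\vecG\cap[\ovcM_\vecG]^{\vir}]$ as a $\paba_*$-pushforward with an external factor $e_\TL(N_\alpha)/k$; the $e_\TL(N_\alpha)$ absorbs $\paba^*e_\TL(N_\alpha)^{-1}$ in $\eta_\ab$ via the projection formula, the $\pabb^*e_\TL(N_\beta)^{-1}$ supplied by the lemma cancels $\pabb^*e_\TL(N_\beta)$ in $\eta_\ab$, the factor $1/k$ cancels the leading $k$ in $\eta_\ab$, and the automorphism weights match by Lemma \ref{lem:eqns1}(1). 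This yields
\begin{equation*}
\iota_\alpha^*\Cont(\vecG)\;=\;\paba_*\!\left[\frac{C_\ab(k)}{-kz+c_1^\TL(L_\ab)}\cdot\pabb^*\iota_\beta^*\Cont(\vecG_1)\Big|_{z=c_1^\TL(L_\ab)/k}\right].
\end{equation*}

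For the exceptional graphs $\vecG^{\alpha,\beta}_{1,k}$ and $\vecG^{\alpha,\beta}_{2,k}$, Lemma \ref{lem:leading} directly supplies the analogous identity with $\pabb^*\iota_\beta^*\Cont(\vecG_1)|_{z=c_1^\TL(L_\ab)/k}$ replaced by $-\bone\cdot(c_1^\TL(L_\ab)/k)$ and $\pabb^*\iota_\beta^*\bt(c_1^\TL(L_\ab)/k)$, respectively. Assembling all contributions and summing with the Novikov weight $\cQ^\cD/n!=(\cQ^{\cD_1}/n!)\cdot q^{k\dab}$, the sum over $(n,\cD_1,\vecG_1)$ with $\vecG_1\in\DG^\beta_{0,n+1,\cD_1}$, together with the prefactors $-\bone z'$ and $\iota_\beta^*\bt(z')$, rebuilds $\pabb^*\iota_\beta^*\f(z')$ via \eqref{eqn:f_restricted} (extended from $\DG^\beta$ to $\DG$ by Proposition \ref{prop:cont0}) at $z'=c_1^\TL(L_\ab)/k$. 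Summing over $\beta\in\adj(\alpha)$ and $k\in\N$ then yields the right-hand side of the proposition.

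The main technical obstacle is the bookkeeping of Euler-class cancellations (between Lemma \ref{lem:proj_formula1} and Lemma \ref{lem:vir_normal1}) together with the careful identification of the factor $(-\pr_2^*\pi^*c_1^\TL(L_\ab)-k\pr_1^*i_{\vecG_1}^*\psi_1)$ appearing in $e_\TL(N^{\vir}_\vecG)$ as the pulled-back, rescaled denominator of $\phi_{\vecG_1}$ evaluated at $z'=c_1^\TL(L_\ab)/k$; it is this identification that produces the clean factorization $\phi_\vecG=\pr_1^*\phi_{\vecG_1}|_{z'=c_1^\TL(L_\ab)/k}\cdot \pr_2^*\pi^*\eta_\ab$ and, consequently, allows the inner sum to be recognized as $\iota_\beta^*\f$ at the shifted parameter.
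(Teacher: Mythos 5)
Your proposal is correct and follows essentially the same route as the paper: apply the bijection $\Phi_1$, use Lemma \ref{lem:eqns1}, Lemma \ref{lem:vir_normal1}, and Lemma \ref{lem:proj_formula1} to reduce each generic $(\alpha,1)$-type contribution to a $\paba_*$-pushforward of $\pabb^*\iota_\beta^*\Cont(\vecG_1)$ at $z'=c_1^\TL(L_\ab)/k$, supply the $-\bone z'$ and $\iota_\beta^*\bt(z')$ prefactors from the exceptional graphs $\vecG^\ab_{1,k},\vecG^\ab_{2,k}$ via Lemma \ref{lem:leading}, and invoke Proposition \ref{prop:cont0} to recognize the resulting sum as $\pabb^*\iota_\beta^*\f(z')$. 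The paper's proof leaves the Euler-class cancellations, the $k$-factor bookkeeping, and the identification of the $(-\pr_2^*\pi^*c_1^\TL(L_\ab)-k\pr_1^*i_{\vecG_1}^*\psi_1)$ factor as the pulled-back denominator implicit in ``By using Lemma \ref{lem:eqns1}, Lemma \ref{lem:proj_formula1} and Lemma \ref{lem:vir_normal1}''; you make exactly these cancellations explicit, which is a useful elaboration but not a different argument.
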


\begin{proof}
To begin with, we rewrite the left-hand side using the bijection $\Phi_1$ as follows:
\begin{align*}
&	\sum_{ \substack{n\geq0, \cD\in\Eff(\XLV) \\ (n,\cD)\neq(0,0),(1,0)} } \frac{\cQ^\cD}{n!} \sum_{\vecG\in\DG^{\alpha,1}_{0,n+1,\cD}} \iota_\alpha^*\Cont_\vecG(z)	\\
=\ &	\sum_{\beta\in\adj(\alpha)} \sum_{k\in\N} q^{k\cdot \dab} \left( \iota_\alpha^*\Cont_{\vecG^{\alpha,\beta}_{1,k}}(z) + \iota_\alpha^*\Cont_{\vecG^{\alpha,\beta}_{2,k}}(z) \right)	\\
+\ &	\sum_{\beta\in\adj(\alpha)} \sum_{k\in\N} q^{k\cdot \dab} \sum_{ \substack{n\geq0, \cD\in\Eff(\XLV) \\ (n,\cD)\neq(0,0),(1,0)} } \frac{\cQ^\cD}{n!} \sum_{\vecG_1\in\DG^\beta_{0,n+1,\cD}} \iota_\alpha^*\Cont_{\Phi_1(\vecG_1,\vecG_{2,k}^\ab)}(z).
\end{align*}
By using Lemma \ref{lem:eqns1}, Lemma \ref{lem:proj_formula1} and Lemma \ref{lem:vir_normal1}, we have
\begin{align*}
\iota_\alpha^*\Cont_\vecG(z) = {\paba}_* \left[ \frac{C_\ab(k)}{-kz+c_1^\TL(L_\ab)} \cdot \pabb^* \iota_\beta^*\Cont_{\vecG_1}\left(z=\frac{c_1^\TL(L_\ab)}{k}\right) \right]
\end{align*}
where $\vecG\in\DG^{\alpha,1}_{0,n+1,\cD}$ and $\Phi_1(\vecG_1,\vecG^\ab_{2,k})=\vecG$.
By combining the above equations with Lemma \ref{lem:leading}, we obtain the desired equality.
\end{proof}

\subsection{Contribution of the $(\alpha,2)$-type graphs}

The contribution of the $(\alpha,2)$-type graphs can be computed as follows.

\begin{proposition}[\cite{FL}]
\label{prop:cont2}
It holds that
\begin{multline*}
\sum_{ \substack{n\geq0, \cD\in\Eff(\XLV) \\ (n,\cD)\neq(0,0),(1,0)} } \frac{\cQ^\cD}{n!} \sum_{\vecG\in\DG^{\alpha,2}_{0,n+1,\cD}} \iota_\alpha^*\Cont(\vecG) \\
= \sum_{ \substack{n\geq0, \cD\in\Eff(\XLV_\alpha) \\ (n,\cD)\neq(0,0),(1,0)} } \frac{\cQ^{\ {\iota_\alpha}_*\cD}}{n!} \cdot e_\TL(N_\alpha) \cdot {\ev_1}_* \left[ \frac{\prod_{i=2}^{n+1}\ev_i^*\bt^\alpha(\psi_i)}{-z-\psi_1} \cdot e_\TL((N_\alpha)_{0,n+1,\cD})^{-1} \right. \\
\left. \cap \left[ (\XLV_\alpha)_{0,n+1,\cD} \right]^{\vir} \right]
\end{multline*}
where 
\begin{multline*}
\bt^\alpha(z) = \iota_\alpha^*\bt(z) \\
+ \sum_{\beta\in\adj(\alpha)} \sum_{k\in\N} {p_{\alpha\cup\beta,\alpha}}_* \left[ q^{k\cdot \dab} \cdot \frac{C_\ab(k)}{-kz+c_1^\TL(L_\ab)} \cdot \pabb^*\iota_\beta^*\f\left( \frac{c_1^\TL(L_\ab)}{k} \right) \right].
\end{multline*}
\end{proposition}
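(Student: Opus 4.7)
The idea is to decompose a graph $\vecG\in\DG^{\alpha,2}_{0,n+1,\cD}$ into a \emph{stable central vertex} $v_0:=s_1$ (with label $\alpha$, stable because $\val(v_0)\geq 3$ or $\cD_{v_0}\neq 0$) together with the sub-trees attached to $v_0$, and to recognize that each attached sub-tree contributes exactly the $(\alpha,1)$-recursion computed in Proposition \ref{prop:cont1}. Concretely, removing $v_0$ disconnects $\Gamma$ into sub-trees rooted at the adjacent vertices $v_1,\dots,v_m$ of $v_0$; if the edge from $v_0$ to $v_i$ has degree $k_i$ and $\alpha_{v_i}=\beta_i\in\adj(\alpha)$, then the closed sub-tree through $e_i$ and the sub-tree at $v_i$ is precisely an $(\alpha,1)$-type piece, and hence by the bijection $\Phi_1$ it corresponds to a triple $(\beta_i,k_i,\vecG_i')$ with $\vecG_i'\in\DG^{\beta_i}$.

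Correspondingly, $\ovcM_{\vecG}$ factorizes as a fiber product of the central moduli $(\XLV_\alpha)_{0,m+n_0,\cD_{v_0}}$ (where $n_0=|\mrk(v_0)|$) with the various $\ovcM^{\alpha,\beta_i}_{k_i}\times_{\XLV_{\beta_i}}\ovcM_{\vecG_i'}$ along the evaluation maps at the nodes joining $v_0$ to the $v_i$'s. First, I would verify this factorization and the corresponding decomposition of $\psi$-classes, $[\,\ovcM_{\vecG}\,]^{\vir}$, and $|\Aut(\vecG)|$; next, I would split the virtual normal bundle via \eqref{eqn:vir_normal} and \eqref{eqn:fiber_aut_def_ob}. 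The branches contribute exactly the factors in Lemma \ref{lem:vir_normal1} and Lemma \ref{lem:eqns1}, whereas the only ``new'' moving contribution concentrated on the central moduli comes from $\R g_*F^*N_\alpha$ on the universal curve over $(\XLV_\alpha)_{0,m+n_0,\cD_{v_0}}$, whose Euler class is $e_\TL((N_\alpha)_{0,m+n_0,\cD_{v_0}})$. The prefactor $e_\TL(N_\alpha)$ arises from $\iota_\alpha^*{\iota_\alpha}_*$ applied to the pushforward through the closed embedding $\XLV_\alpha\hookrightarrow\XLV$, exactly as in the proof of Lemma \ref{lem:proj_formula1}.

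Applying Lemma \ref{lem:proj_formula1} and Lemma \ref{lem:vir_normal1} separately to each branch, together with Lemma \ref{lem:leading} for the two ``leaf'' shapes $\vecG^{\alpha,\beta}_{1,k}$ and $\vecG^{\alpha,\beta}_{2,k}$, the summation $\sum_{\beta_i,k_i}\sum_{\vecG_i'\in\DG^{\beta_i}}\cQ^{\cdots}\cdot(\text{branch contribution})$ collapses (at the node attaching $v_i$ to $v_0$) into the second summand in the definition of $\bt^\alpha(z)$, evaluated at the $\psi$-class at that node. A genuine marking at $v_0$ (other than the first) instead contributes $\iota_\alpha^*\bt(\psi)$, i.e.\ the first summand of $\bt^\alpha$. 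Thus each of the $m+n_0$ extra legs on the central vertex is filled independently by either a genuine input or a branch input, both described by the same series $\bt^\alpha$.

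Finally, one assembles by summing over all $(\alpha,2)$-graphs. The key combinatorial point is that the automorphism factor $|\Aut(\vecG)|^{-1}$ together with the choices of how to distribute $n$ input slots among $m+n_0=n$ legs on the central vertex reorganises into the standard $1/n!$ factor of the descendant potential on $\XLV_\alpha$; each ordered way of labelling the $n$ extra legs of $(\XLV_\alpha)_{0,n+1,\cD_{v_0}+\sum k_i d_{\alpha\beta_i}}$ by ``marking'' or ``$(\beta_i,k_i,\vecG_i')$-branch'' is counted exactly once after dividing by $|\Aut(\vecG)|$. Summing over $\cD_{v_0}$ and the $k_id_{\alpha\beta_i}$ with total contributing to $\cD$, together with $\iota_{\alpha*}\cD_{v_0}+\sum k_id_{\alpha\beta_i}=\cD$, recovers the claimed expression with Novikov weight $\cQ^{\iota_{\alpha*}\cD}$ (the branch pieces $q^{k_id_{\alpha\beta_i}}$ already having been absorbed into $\bt^\alpha$). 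The main obstacle in this argument is precisely this bookkeeping of automorphisms and symmetries: one must check that grouping identical branches with their symmetry factors, together with the automorphism subgroup of $\Aut(\vecG)$ permuting them, indeed produces the clean $1/n!$ factor of the descendant potential, as carried out in \cite{FL}.
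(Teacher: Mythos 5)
Your proposal is essentially the same decomposition-of-graphs argument the paper sketches (itself deferring to \cite{FL}): break each $(\alpha,2)$-graph at the stable central vertex carrying the first marking, recognize the branches as $(\alpha,1)$-pieces already handled by Proposition \ref{prop:cont1} and Lemmas \ref{lem:proj_formula1}–\ref{lem:leading}, isolate the new moving contribution $\R g_*F^*N_\alpha$ over the central moduli, and absorb the automorphism and labelling count into the $1/n!$ of the descendant potential. The only small slip is the bookkeeping identity ``$m+n_0=n$''---with your conventions ($m$ branches, $n_0$ markings at $v_0$ including the first) one has $m+n_0-1$ free legs, and the index $n$ in the reorganized sum is a fresh summation variable rather than the original $n$; this does not affect the argument.
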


\begin{proof}
This follows from the argument in \cite[Section 3.2]{FL}.
We only give a sketch of the proof.

Let $\vecG=(\Gamma,\vecalpha,\veck,\veccD,\vecs)$ be a decorated graph of type $(\alpha,2)$, and set $m=\val(s_1)-1$.
Then $\vecG$ can be decomposed into the graph $\vecG_1$ and $m$ $(\alpha,1)$-type graphs $\vecG_2,\dots,\vecG_{m+1}$.
Here $\vecG_1$ is the decorated graph given by the following data:
\begin{itemize}
\item a tree $\Gamma_1$ consisting of one vertex $v$ and $m+1$ markings;
\item $\alpha_v=\alpha$ and $\cD_v$ equals the degree of the vertex $s_1\in\Gamma$.
\end{itemize}
Let $\vecG_j\in\DG^{\alpha,1}_{0,n_j+1,\cD_j}$ for $2\leq j\leq m+1$.

By construction, $\ovcM_\vecG$ fits into the following fiber diagram:
\[
\xymatrix{
\ovcM_\vecG\ar[r]\ar[d]		&	\prod_{j=2}^{m+1}\ovcM_{\vecG_j}\ar[d]	\\
\ovcM_{\vecG_1}\ar[r]		&	(\XLV_\alpha)^{m}
}
\]
where the morphism $\ovcM_{\vecG_1}\to(\XLV_\alpha)^{m}$ is given by the evaluation maps $\ev_{\vecG_1,2},\dots,\ev_{\vecG_1,m+1}$ and the morphism $\prod_{j=2}^{m+1}\ovcM_{\vecG_j}\to(\XLV_\alpha)^{m}$ is given by the evaluation maps $\ev_{\vecG_2,1},\dots,\ev_{\vecG_{m+1},1}$.
Hence the integral over $\ovcM_\vecG$ can be computed by the integral over $\ovcM_{\vecG_1}$ with inputs given by the integrals over $\ovcM_2,\dots,\ovcM_{m+1}$.
By taking the sum over all $(\alpha,1)$-type decorated graphs, each input becomes the summation of the contribution of the $(\alpha,1)$-type graphs, which equals $\bt^\alpha(\psi_j)$; see Proposition \ref{prop:cont1}.

The term $e_\T((N_\alpha)_{0,n+1,\cD})^{-1}$ comes from the comparison of the Euler classes of the virtual normal bundles:
\[
\frac{\prod^{n_1+1}_{j=2}\pr^*_j e_\TL(N^{\vir}_{\vecG_j})}{e_\TL(N^{\vir}_\vecG)} = \left[ {\prod_j}' \frac{\pr_j^*\ev_{\vecG_j,1}^*e_\TL(N_\alpha)}{-\pr_1^*i_{\vecG_1}^*\psi_{1,j}-\pr_j^*i_{\vecG_j}^*\psi_{j,1}} \right] \cdot \frac{1}{\pr_1^*e_\TL\left( \left( N_\alpha \right)_{0,n_1+1,\cD_1} \right)}
\]
where the symbol $\prod_j'$ means taking the product over all $2\leq j\leq n_1+1$ such that $(n_j,\cD_j)\neq(1,0)$, $\pr_j\colon\ovcM_\vecG\to\ovcM_{\vecG_j}$ denotes the canonical projection, and $\psi_{j,i}$, $2\leq j\leq m$, (resp. $\psi_{1,i}$) denotes the $\T$-equivariant first Chern class of the $i$-th universal cotangent line bundle for $\XLV_{0,n_j+1,\cD_j}$ (resp. $\XLV_{0,m+1,\cD_0}$).
\end{proof}

\subsection{Proof of Theorem \ref{thm:characterization}}
\label{subsec:proof_characterization}
We now prove Theorem \ref{thm:characterization}.
We write $S=\extEff(\XLV)$.
We first assume that $\f$ be a $\Frac(H^*_\TL(\pt))[\![S]\!][\![x]\!]$-valued point on $\cL^\TL_{\XLV}$ and write
\[
\f = -1\cdot z + \bt(z) + \sum_{ \substack{n\geq0, \cD\in\Eff(\XLV) \\ (n,\cD)\neq(0,0),(1,0)} } \frac{\cQ^\cD}{n!} \cdot {\ev_1}_* \left[ \frac{\prod_{i=2}^{n+1} \ev_i^* \bt(\psi_i)}{-z-\psi_1} \cap [\XLV_{0,n+1,\cD}]^{\vir} \right]
\]
where $\bt(z)\in H^*_\TL(\XLV)_{\loc}[z][\![S]\!][\![x]\!]$ with $\bt(z)|_{(\cQ,x)=0}$.
By combining the equation \eqref{eqn:f_restricted}, Proposition \ref{prop:cont0}, Proposition \ref{prop:cont1} and Proposition \ref{prop:cont2}, we can see that, via Laurent expansion at $z=0$, $\iota_\alpha^*\f$ can be interpreted as a $\Frac(H^*_\TL(\pt))[\![S]\!][\![x]\!]$-valued point of the twisted cone $\cL_{\XLV_\alpha,(N_\alpha,e_\TL^{-1})}$ whose non-negative part as a $z$-series equals
\begin{equation}
\label{eqn:non-negative}
\iota_\alpha^*\f - \Prin_{z=0} (\iota_\alpha^*\f) = -1\cdot z + \bt^\alpha(z),
\end{equation}
which implies \textbf{(C3)}.
Since the coefficients of $\Prin_{z=0} (\iota_\alpha^*\f)$ (as a formal power series in $(\cQ,x)$) are all polynomials in $z^{-1}$, the poles of $\iota_\alpha^*\f$ except for the pole at $z=0$ come from \eqref{eqn:non-negative}.
This observation with the explicit formula for $\bt^\alpha(z)$ in Proposition \ref{prop:cont2} implies \textbf{(C1)} and \textbf{(C2)}.

Conversely, we assume that $\f\in\cH_{\XLV}^\TL[\![S]\!][\![x]\!]$ satisfies $\f|_{(\cQ,x)=0}=-1\cdot z$, \textbf{(C1)}, \textbf{(C2)} and \textbf{(C3)}.
It is enough to show that, under these assumptions, $\f$ (or equivalently $\{\iota_\alpha^*\f\}_{\alpha\in F_\sfL}$) is uniquely determined by its non-negative part $z^{-1}\Prin_{z=\infty}(z\f)$.
For $\cD\in\extEff(\XLV)$, we define a degree of $\cQ^\cD$ by using the valuation introduced in Section \ref{subsec:effective}, which can be assumed to be an integer, and define $\deg(x_i)=i$ for $i\geq1$.
For any $(\cQ,x)$-series $\bg$, we write the homogeneous part of $\bg$ of degree $n$ as $(\bg)_n$.

We proceed by induction on the degree of $(\cQ,x)$.
We assume that we know $\f_{\leq n} := \sum_{m=0}^n \f_m$ for some integer $n$.
From \textbf{(C1)}, we can write
\[
(\iota_\alpha^*\f)_{n+1} = z^{-1}\Prin_{z=\infty}(z\cdot\iota_\alpha^*\f)_{n+1} + \Prin_{z=0}(\iota_\alpha^*\f)_{n+1} + \sum_{\beta\in\adj(\alpha)}\sum_{k\in\N} \Prin_{z=\frac{\lambda_\ab}{k}}(\iota_\alpha^*\f)_{n+1}.
\]
In the right-hand side, the third term can be computed from $\f_{\leq n}$ thanks to \textbf{(C2)}, while the second term is determined from 
\[
z^{-1}\Prin_{z=\infty}(z\cdot\iota_\alpha^*\f)_{\leq n+1} + \sum_{\beta\in\adj(\alpha)}\sum_{k\in\N} \Prin_{z=\frac{\lambda_\ab}{k}}(\iota_\alpha^*\f)_{\leq n+1}
\]
by \textbf{(C3)}. 
By repeating this procedure, we can completely determine $\f$ from $z^{-1}\Prin_{z=\infty}(z\f)$ by using \textbf{(C1)}, \textbf{(C2)} and \textbf{(C3)}.
This proves Theorem \ref{thm:characterization}.

\section{Mirror theorem for a product of projective bundles}
\label{sec:twist_mirror}
In this section, we construct a twisted $I$-function for a product of projective bundles each coming from a vector bundle.
The proof is based on the proof of the mirror theorem for a projective bundle \cite[Theorem 1.1]{IK:quantum}.
This section is independent of the previous section.
By combining Theorem \ref{thm:characterization} with the mirror theorem (Theorem \ref{thm:twist_mirror}), we will establish the main result in the next section.

\subsection{Statement}
\label{subsec:twist_FT}
We begin with the following data:
\begin{itemize}
\item a smooth toric data $\sfL=(\LL^\vee,D\colon\Z^K\to\LL^\vee,\omega)$ with $\rank(\LL^\vee)=K$;
\item a complex smooth projective variety $B$;
\item $N$ vector bundles $V_1,\dots V_K,W_{K+1},\dots,W_N$ over $B$ whose duals are globally generated;
\item $D_{K+1},\dots,D_N\in\LL^\vee$.
\end{itemize}
In this case $\XLV$ is a fiber product of projective bundles $\PP(V_1),\dots,\PP(V_K)$ over $B$.
Due to Proposition \ref{prop:cohomology}, its cohomology can be written as
\[
H^*_\TL(\XLV) \cong \left.H^*_\TL(B) [u_1,\dots,u_K] \right/ \left( e_\TL(V_1\otimes L_1),\dots,e_\TL(V_K\otimes L_K) \right).
\]
For $K+1\leq i\leq N$, we define
\[
u_i = -\lambda_i + \sum_{j=1}^K D_j^\vee(D_i) \cdot (u_j+\lambda_j)
\]
where $\{D_i^\vee\}_{i=1}^K\subset\LL$ denotes a dual basis of $\{D_i\}_{i=1}^K$.
We construct the $\TL$-equivariant vector bundle $\cW\to\XLV$ as follows:
\[
\cW := \left. \left( \cU_\sfL(\vecV) \times_B \bigoplus_{i=K+1}^N W_i \right) \right/ \KL
\]
where $\TL = (\C^\times)^N$ acts on $\bigoplus_{i=1}^K V_i \oplus \bigoplus_{i=K+1}^N W_i$ diagonally, and $\KL = \Hom(\LL^\vee,\C^\times)$ acts on $W_i$ via the character given by $D_i$ for $K+1\leq i\leq N$.
Using the notation in Section \ref{subsec:effective}, we can write $\cW=\bigoplus_{i=K+1}^N W_i(D_i)$.

We set $V=\bigoplus_{i=1}^K V_i$, $W=\bigoplus_{i=K+1}^N W_i$ and $\TLd$ to be the copy of $\TL$.
We consider the diagonal action of $\TLd$ on $V\oplus W$, and write $\mu_1,\dots,\mu_N$ for the equivariant parameters for $\T'$.
We take a function $I_{V\oplus W}^\mu(x,z)\in H^*_\TLd(V\oplus W)[z,z^{-1}][\![\Eff(B)]\!][\![x]\!]$ such that $(zI_{V\oplus W}^\mu)|_{z\mapsto-z}$ is a $H^*_\TLd(\pt)[\![\Eff(B)]\!][\![x]\!]$-valued point of $\cL_{V\oplus W, \TLd}$ where $x$ be a set of formal parameters.
We will prove the following.
\begin{theorem}
\label{thm:twist_mirror}
Let $\sfL$, $\vecV$, $\vecW$, $\cW$ and $I_{V\oplus W}^\mu$ be as above.
Define the function $(I_{V\oplus W}^\mu)_{\tw}\sphat(t,x,y,z)$ to be
\begin{multline*}
(I_{V\oplus W}^\mu)_{\tw}\sphat(t,x,y,z) = e^{\sum_{i=1}^N t_i u_i/z} \sum_{\ell\in\LL} \frac{\tq^\ell e^{\sum_{i=1}^N D_i(\ell)\cdot t_i}}{\prod_{i=1}^K \prod_{c=1}^{D_i(\ell)} \prod_{ \substack{\delta\colon\mathrm{Chern\ roots} \\ \mathrm{of\ } V_i} } (u_i + \delta + cz)}	\\
\cdot \frac{I_{V\oplus W}^{u + D(\ell)z}(x,z)}{\prod_{i=K+1}^N \prod_{c=1}^{D_i(\ell)} \prod_{ \substack{\delta\colon\mathrm{Chern\ roots} \\ \mathrm{of\ } W_i} } (u_i + \delta + cz)}
\end{multline*}
where $\tq$ is a formal variable for $\C[\![\LL_\eff]\!]$, and $I_{V\oplus W}^{u + D(\ell)z}$ denotes the function $I_{V\oplus W}^\mu$ with replaced $\mu_i$ with $u_i+D_i(\ell)z$ for $1\leq i\leq N$.
Then $-z(I_{V\oplus W}^\mu)_{\tw}\sphat(t,x,y,-z)$ is a $\Frac(H^*_\TL(\pt))[\![ \extEff(\XLV) ]\!] [\![ t,x,y ]\!]$-valued point of the twisted Givental cone $\cL_{\XLV,(\cW,e_\lambda^{-1})}$.
\end{theorem}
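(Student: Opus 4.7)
My plan is to generalize \cite[Theorem 3.3]{IK:quantum} (the mirror theorem for a single non-split projective bundle, corresponding to $K=1$ with no $W_i$). The key ingredients are the quantum Riemann--Roch operator (Subsection \ref{subsec:QRR}), the reduction of Gromov--Witten theory of a vector bundle to twisted theory of its base (Subsection \ref{subsubsec:GW_vector_bundle}), and functoriality of virtual classes under convex subvariety embeddings (Theorem \ref{thm:subvariety}).

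First I would translate the hypothesis on $I_{V\oplus W}^\mu$ into a statement on the base $B$. Since $V^\vee\oplus W^\vee$ is globally generated, $V\oplus W$ is semi-projective, so the polynomial-in-$\mu$ assumption lets me identify $(zI_{V\oplus W}^\mu)|_{z\to-z}$ with a point on the twisted cone $\cL_{B,(V\oplus W,e_{\TLd}^{-1})}$ by Subsection \ref{subsubsec:GW_vector_bundle}. Applying the modified QRR operator $\Delta_{V\oplus W}^\mu$ of \eqref{eqn:modified_QRR} together with Lemma \ref{lem:vector} then transports this to a point $J_B$ on the untwisted cone $\cL_B$, valued in a suitable completion involving $\mu^{-1}$.

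Next I would build the $I$-function for $\XLV$ in two stages. Since $\XLV$ is a fiber product of the $K$ projective bundles $\PP(V_1),\dots,\PP(V_K)$ over $B$, I would apply \cite[Theorem 3.3]{IK:quantum} in succession, one factor at a time: at each step, pull the current input back to the enlarged base, treat the remaining $V_j$'s and $W_j$'s as equivariant twist data, and use Lemma \ref{lem:vector} to propagate the polynomial hypothesis. The cumulative effect of these $K$ Fourier transformations reproduces precisely the $i=1,\dots,K$ factors of the denominator in $(I_{V\oplus W}^\mu)_{\tw}\sphat$ and yields a point on $\cL_{\XLV,\T}$. Finally, since $\cW=\bigoplus_{i>K} W_i(D_i)$ carries a fiberwise $\T$-action with only nonzero weights, a single further application of QRR via $\Delta_{\cW}^\lambda$ introduces the remaining $i>K$ factors of the denominator and promotes the output from $\cL_{\XLV,\T}$ to the twisted cone $\cL_{\XLV,(\cW,e_\lambda^{-1})}$.

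The main obstacle will be the iterated projective-bundle step: at each iteration one must verify that the input stays polynomial in the relevant equivariant parameter and that the substitution $\mu_i \mapsto u_i + D_i(\ell)z$ produced by the Fourier summation is exactly what is needed for the next QRR contribution to match. The combinatorics is further complicated by the fact that $D_i(\ell)$ need not be nonnegative for $i>K$, so the products $\prod_{c=1}^{D_i(\ell)}(u_i+\delta+cz)$ must be interpreted as inverses when $D_i(\ell)<0$; this has to be reconciled with the $\mu^{-1}$-adic topology on the Novikov ring from the first step, which ultimately controls the convergence of the $\ell$-sum.
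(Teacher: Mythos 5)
Your approach is genuinely different from the paper's. The paper does \emph{not} iterate the projective-bundle mirror theorem of \cite{IK:quantum}; instead it reduces in a single step to the \emph{split} toric bundle $\XLO = B\times\prod_{i=1}^K\PP^{s_i-1}$ obtained from exact sequences $0\to V_i\to\cO^{\oplus s_i}\to\cQ_i\to0$, applies Brown's mirror theorem for $\XLO$, builds $I^\mu$ by hitting $J_{\XLO}$ with products of quantum Riemann--Roch operators and $e^{H/z}$-type operators \emph{for all $i=1,\dots,N$ simultaneously}, and then passes from $\XLO$ to $\XLV$ by a single application of the convex-subvariety functoriality (Theorem~\ref{thm:subvariety}), treating $\XLV\hookrightarrow\XLO$ as the zero locus of a section of $\bigoplus\cQ_i(D_i)$. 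Keeping all the $\mu_i$ in play at once, and taking the non-equivariant limit $\mu\to0$ only at the very end, is precisely what makes the argument close.

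The iteration you propose has a real gap at the chaining step. After applying \cite[Theorem~3.3]{IK:quantum} once, you obtain a point on a \emph{twisted} Givental cone of $\PP(V_1)$ with coefficients in $\Frac(H^*_\T(\pt))$ — the Fourier-type summation introduces denominators like $\prod_{c}(u_i+\delta+cz)$ and forces you into the localized ring. To apply the theorem a second time over the base $\PP(V_1)$ you would need to re-interpret this output as a point on the equivariant Givental cone of a vector bundle \emph{over} $\PP(V_1)$, and for that the identification in Section~\ref{subsubsec:GW_vector_bundle} requires the point to be \emph{polynomial} in the relevant equivariant parameters. Lemma~\ref{lem:vector} cannot supply this: it takes a polynomial-in-$\lambda$ point on the equivariant cone of a bundle and produces a point on the \emph{non-equivariant} cone of the base after applying QRR, i.e., it goes in the opposite direction from what the iteration needs. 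You flag the polynomiality issue but invoke Lemma~\ref{lem:vector} as if it resolves it, and that is where the proposal breaks down. Without a mechanism to re-establish polynomiality after each Fourier step (or some reorganization of the summation that avoids localizing prematurely), the iteration does not chain.
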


\begin{remark}
A priori the function $(I_{V\oplus W}^\mu)_{\tw}\sphat$ belongs to 
\[
H^*(\XLV)(\lambda)(\!(z)\!)[\![\Eff(B)\oplus\LL_\eff]\!][\![t,x]\!].
\]
We implicitly use the identification in \eqref{eqn:extEff} and interpret this function as an element of 
\[
H^*(\XLV)(\lambda)(\!(z)\!)[\![\extEff(\XLV)]\!][\![t,x,y]\!].
\]
\end{remark}

\begin{remark}
\label{rem:T'}
For the convenience of the proof, we wish to distinguish between the torus $\T$ acting on $\XLV$ and the torus acting on $V\oplus W$, so we denote the latter one as $\T'$.
While this is a different setup from Theorem \ref{thm:intro_twist}, the assertion remains the same.
\end{remark}

\subsection{Big $J$-function}
\label{subsec:big_J}
Before proceeding to the proof, we introduce a specific point on $\cL_{V\oplus W,\TLd}$.
We take any $\C$-basis of $H^*(B)$ and write as $\{\phi_i\}_{i\in I}$.
Since $H^*_{\TLd}(V\oplus W)\cong H^*(B)[\mu]$, we can take a $\C$-basis of $H^*_{\TLd}(V\oplus W)[z]$ as follows:
\begin{equation}
\label{eqn:basis}
\{ \phi_i z^n \mu^a \colon i\in I, n\in\Z_{\geq0}, a\in\Z_{\geq0}^N \}.
\end{equation}
We write the coordinate on $H^*_{\TLd}(V\oplus W)[z]$ associated to this basis as $\tbtau = \{ \tau^i_{n,a} \}$.
We also obtain the coordinate system $\btau = \{ \tau_n^i = \tau^i_{n,0} \}$ on $H^*(V\oplus W)[z]$. 

\begin{definition}
We define $\J_{V\oplus W}^\mu(\tbtau,z)\in H^*_{\TLd}(B)[z,z^{-1}][\![\Eff(B)]\!][\![\tbtau]\!]$ as follows: 
\begin{align*}
z\J_{V\oplus W}^\mu(\tbtau,z) = z + \tbtau(z) + \sum_{ \substack{n\geq0,d\in\Eff(B) \\ (n,d)\neq(0,0),(1,0)} } \sum_{i\in I} \frac{Q^d}{n!} \corr{\frac{\phi_i}{z-\psi},\tbtau(\psi),\dots,\tbtau(\psi)}_{0,n+1,d}^{V\oplus W,\TLd}\phi^i,	
\end{align*}
where $\{\phi^i\}$ is a dual basis of $\{\phi_i\}$ with respect to the Poincar\'{e} pairing on $V\oplus W$, and 
\[
\tbtau(z) = \sum_{i\in I} \sum_{n\geq0} \sum_{a\in\Z_{\geq0}^N} \tau^i_{n,a} \phi_iz^n\mu^a.
\]
We write $\btau(z) = \sum_{i\in I} \sum_{n\geq0} \tau^i_n \phi_iz^n$ and identify $\tau^i_n$ with $\tau^i_{n,0}$.
\end{definition}

From the definition of the Lagrangian cone, it is clear that $-z\J_{V\oplus W}^\mu(\tbtau,-z)$ is a $H^*_{\TLd}(\pt)[\![\Eff(B)]\!][\![\tbtau]\!]$-valued point of $\cL_{V\oplus W,\TLd}$.
Note that from $\J_{V\oplus W}^\mu$ we can obtain any $H^*_\TLd(\pt)[\![\Eff(B)]\!][\![x]\!]$-valued point $I_{V\oplus W}^\mu$ in $\cL_{V\oplus W,\TLd}$ by substituting the non-negative part of $zI_{V\oplus W}^\mu-z$ with respect to $z$ into $\tbtau$.
This observation and the following lemma imply that it is enough to prove Theorem \ref{thm:twist_mirror} for $\J_{V\oplus W}^\mu(\btau,z)$.
\begin{lemma}
It holds that
\[
(\J_{V\oplus W}^\mu(\tbtau,z))_{\tw}\sphat = \Delta(t,\tbtau,z\partial_t,z\partial_{\btau},z) (\J_{V\oplus W}^\mu(\btau,z))_{\tw}\sphat
\]
where $\Delta(t,\tbtau,z\partial_t,z\partial_{\btau},z)$ is the differential operator:
\[
\Delta(t,\tbtau,z\partial_t,z\partial_{\btau},z) = \exp \left( z^{-1} \sum_{i\in I} \sum_{n\geq0} z\partial_{\tau^i_{n,0}} \sum_{a\in\Z_{\geq0}^N\setminus\{0\}} \tau^i_{n,a} \prod_{j=1}^N (z\partial_{t_j})^{a_j} \right).
\]
\end{lemma}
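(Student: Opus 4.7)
The plan is to reduce the identity to a Taylor-type expansion followed by a simple exchange identity for the hat transform.

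First I would observe that
\[
\tbtau(z) = \btau(z) + \sum_{i\in I}\sum_{n\geq0}\sum_{a\in\Z_{\geq0}^N\setminus\{0\}}\tau^i_{n,a}\,\phi_i z^n\mu^a,
\]
so $\tbtau$ is obtained from $\btau$ by shifting the coefficient of $\phi_i z^n$ from $\tau^i_{n,0}$ to $\tau^i_{n,0}+\sum_{a\neq 0}\tau^i_{n,a}\mu^a$. Because $\J_{V\oplus W}^\mu(\btau,z)$ is a formal power series in the $\tau^i_{n,0}$'s (the variables $\mu$ and $\tau^i_{n,a}$ for $a\neq0$ playing the role of parameters), Taylor's theorem delivers
\[
\J_{V\oplus W}^\mu(\tbtau,z) \;=\; \exp\!\left(\sum_{i,n,a\neq 0}\tau^i_{n,a}\mu^a\,\partial_{\tau^i_{n,0}}\right)\J_{V\oplus W}^\mu(\btau,z).
\]
The exponential is well defined formally because the operators $\tau^i_{n,a}\mu^a\partial_{\tau^i_{n,0}}$ commute pairwise — for $a\neq 0$, the variables $\tau^i_{n,a}$ are distinct from the $\tau^{i'}_{n',0}$'s, and $\mu$ commutes with all $\partial_{\tau}$'s.

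Next I would commute the hat transform past this operator. The crucial computation is the elementary identity
\[
\prod_{j=1}^N(z\partial_{t_j})^{a_j}\!\left[e^{\sum_i t_iu_i/z}\,e^{\sum_i D_i(\ell)t_i}\right] \;=\; \prod_{j=1}^N\bigl(u_j+D_j(\ell)z\bigr)^{a_j}\,e^{\sum_i t_iu_i/z}\,e^{\sum_i D_i(\ell)t_i},
\]
which, combined with the observation that every remaining factor in the definition of $(\,\cdot\,)_{\tw}\sphat$ is independent of $t$, yields the key operator identity
\[
(\mu^a F)_{\tw}\sphat \;=\; \prod_{j=1}^N(z\partial_{t_j})^{a_j}\,(F)_{\tw}\sphat
\]
for any admissible $F$. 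Since each $\partial_{\tau^i_{n,0}}$ commutes with everything on the $t$/$u$-side and with the hat transform itself, expanding the exponential of step one and applying this exchange term by term gives
\[
(\J_{V\oplus W}^\mu(\tbtau,z))_{\tw}\sphat \;=\; \exp\!\left(\sum_{i,n,a\neq 0}\tau^i_{n,a}\,\partial_{\tau^i_{n,0}}\prod_{j=1}^N(z\partial_{t_j})^{a_j}\right)(\J_{V\oplus W}^\mu(\btau,z))_{\tw}\sphat,
\]
and the operator on the right is exactly $\Delta(t,\tbtau,z\partial_t,z\partial_{\btau},z)$ once the prefactor $z^{-1}$ in its definition is absorbed into the factor $z\partial_{\tau^i_{n,0}}$.

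The only delicate point is to verify that all infinite sums converge in the relevant completed ring of formal series in $(Q,\tq,t,\tbtau)$. This should reduce to a straightforward degree count: each factor $\tau^i_{n,a}$ contributes strictly positive $\tbtau$-degree, so at any fixed order in $(Q,\tq,t,\tbtau)$ only finitely many terms in the exponential expansion are nonzero. I do not anticipate any serious obstacle — the essence of the argument is the single algebraic exchange $(\mu^a F)_{\tw}\sphat = \prod_j(z\partial_{t_j})^{a_j}(F)_{\tw}\sphat$, which mechanises the intuition that inserting an equivariant weight $\mu^a$ inside the Fourier-type transform is the same as differentiating in $t$ outside.
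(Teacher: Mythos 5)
Your proposal is correct and follows essentially the same route as the paper: both hinge on the Taylor-shift identity expressing $\J_{V\oplus W}^\mu(\tbtau,z)$ as $\exp\bigl(\sum_{i,n,a\neq 0}\tau^i_{n,a}\mu^a\partial_{\tau^i_{n,0}}\bigr)\J_{V\oplus W}^\mu(\btau,z)$, together with the observation that $z\partial_{t_j}$ acting on the prefactor $e^{\sum_i t_iu_i/z}e^{\sum_i D_i(\ell)t_i}$ in the hat transform produces the factor $u_j+D_j(\ell)z$. The only difference is direction of travel (the paper simplifies the right-hand side; you transform the left), which is purely cosmetic.
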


\begin{proof}
The right-hand side is equal to
\begin{align*}
&\Delta(t,\tbtau,z\partial_t,z\partial_{\btau},z) \sum_{\ell\in\LL} \frac{\tq^\ell e^{\sum_{i=1}^N \left(D_i(\ell)\cdot t_i + \frac{t_iu_i}{z}\right)}}{\prod_{i=1}^K \prod_{c=1}^{D_i(\ell)} \prod_{ \substack{\delta\colon\mathrm{Chern\ roots} \\ \mathrm{of\ } V_i} } (u_i + \delta + cz)} \\
\cdot&\ \frac{\J_{V\oplus W}^{u + D(\ell)z}(\btau,z)}{\prod_{i=K+1}^N \prod_{c=1}^{D_i(\ell)} \prod_{ \substack{\delta\colon\mathrm{Chern\ roots} \\ \mathrm{of\ } W_i} } (u_i + \delta + cz)} \\
=&\ \sum_{\ell\in\LL} \frac{\tq^\ell e^{\sum_{i=1}^N \left(D_i(\ell)\cdot t_i + \frac{t_iu_i}{z}\right)} \Delta(t,\tbtau,u+D(\ell)z,z\partial_{\btau},z) \J_{V\oplus W}^{u + D(\ell)z}(\btau,z)}{\prod_{i=1}^K \prod_{c=1}^{D_i(\ell)} \prod_{ \substack{\delta\colon\mathrm{Chern\ roots} \\ \mathrm{of\ } V_i} } (u_i + \delta + cz) \prod_{i=K+1}^N \prod_{c=1}^{D_i(\ell)} \prod_{ \substack{\delta\colon\mathrm{Chern\ roots} \\ \mathrm{of\ } W_i} } (u_i + \delta + cz)}.
\end{align*}
Since $\Delta(t,\tbtau,\mu,z\partial_{\btau},z)$ is the operator that shifts $\tau$ to $\btau$, we have
\[
\Delta(t,\tbtau,u+D(\ell)z,z\partial_{\btau},z) \J_{V\oplus W}^{u + D(\ell)z}(\btau,z) = \J_{V\oplus W}^{u + D(\ell)z}(\tbtau,z).
\]
These computations imply the desired formula.
\end{proof}

\subsection{Quantum Riemann-Roch operator}
In this subsection, we use the notations introduced in Section \ref{subsubsec:QRR}.
Let $W$ be a vector bundle over $\XLO$.
We introduce the following functions:
\begin{align*}
z^{-1}G(\lambda)	&= \frac{\lambda\log\lambda-\lambda}{z} + \frac{1}{2}\log\lambda + \sum_{m\geq2} \frac{B_m}{m(m-1)} \left(\frac{z}{\lambda}\right)^{m-1},	\\
G_W^\lambda	&= \sum_{ \substack{\delta\colon\mathrm{Chern\ roots} \\ \mathrm{of\ } W} } G(\lambda+\delta),	\\
H_W^\lambda	&= \rank(W) \cdot \left( \lambda\log\lambda - \lambda + \frac{z}{2}\log\lambda \right) + (\log\lambda) z\partial_{c_1(W)},	\\
S_W^\lambda	&= (\log\lambda) \left(c_1(W) - z\partial_{c_1(W)}\right)
\end{align*}
where $\partial_{c_1(W)}$ denotes the unique vector field on the $(\btau,t_1,\dots,t_K)$-space such that
\[
\partial_{c_1(W)} \left(\btau + \sum_{i=1}^K t_iu_i\right) = c_1(W).
\]
Since $H_W^\lambda, S_W^\lambda \in H^*(B\times\prod_{i=1}^K\PP^{s_i})[\lambda,\log\lambda,z,z\partial_{\btau},z\partial_t]$, the exponents $e^{H_W^\lambda/z}$ and $e^{S_W^\lambda/z}$ are ill-defined as operators on $\cH_{B\times\prod_{i=1}^K\PP^{s_i}}$.
However, it follows from the divisor equation that the operator $e^{S_W(\lambda)/z}$ restricted to Lagrangian cones is well-defined, which replaces $\cQ^\cD$ by $\cQ^\cD \lambda^{-\int_\cD c_1(W)}$. 
Hence the operator $\Delta_W^\lambda$ \eqref{eqn:modified_QRR} can be written as
\[
\Delta_W^\lambda = e^{S_W^\lambda/z} \Delta_{(W,\te^{-1})}(\lambda,z).
\]

\begin{lemma}
\label{lem:H}
Let $W$ be a vector bundle over $\XLO$, $D\in\LL^\vee$, $u=c_1(\cO(D))$ and $k\in\Z$.
\begin{itemize}
\item[$(1)$]
The operator 
\[
\exp\left( \frac{ H_W^{\lambda + z \partial_u} - H_{W(D)}^\lambda }{z} \right) 
\]
is well-defined and belongs to $\C[z^{-1},z\partial_t,z\partial_\tau][\![\lambda^{-1}]\!]$.
Moreover, this operator preserves (twisted) Lagrangian cones.

\item[$(2)$]
It holds that
\begin{equation}
\label{eqn:gamma}
e^{(H_W^{\lambda + u +kz} - H_{W(D)}^\lambda)/z} \cdot \frac{\Delta_W^{\lambda+u+kz}}{\Delta_{W(D)}^\lambda} = \prod_{ \substack{\delta\colon\mathrm{Chern\ roots} \\ \mathrm{of\ } W} } \frac{\prod_{c=-\infty}^k(\lambda+u+\delta+cz)}{\prod_{c=-\infty}^0(\lambda+u+\delta+cz)}.
\end{equation}
Here the right-hand side is intepreted as an element of $\lambda^k \cdot H^*(\XLO)[\lambda^{-1}](\!(z)\!)$.
\end{itemize}
\end{lemma}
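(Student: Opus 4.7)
My plan is as follows. For part (1), I will directly expand $\log(\lambda+z\partial_u)=\log\lambda+\log(1+(z\partial_u)/\lambda)$, where the second term is a well-defined formal series in $\lambda^{-1}$ with polynomial coefficients in $z\partial_u$. Substituting into the defining formula for $H_W^{\lambda+z\partial_u}$ and using the relations $\rank W(D)=\rank W$ and $c_1(W(D))=c_1(W)+(\rank W)u$—so that $z\partial_{c_1(W(D))}=z\partial_{c_1(W)}+(\rank W)\,z\partial_u$—one checks that the $\log\lambda$-singular contributions in $H_W^{\lambda+z\partial_u}-H_{W(D)}^\lambda$ cancel: the term $(\rank W)(z\partial_u)\log\lambda$ arising from $(\lambda+z\partial_u)\log(\lambda+z\partial_u)-\lambda\log\lambda$ cancels the $-(\rank W)\log\lambda\cdot z\partial_u$ coming from $\log\lambda\cdot z\partial_{c_1(W(D))}$, while the $\tfrac{z}{2}\log\lambda$ pieces cancel directly. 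What remains is a formal series in $\lambda^{-1}$ with coefficients in $\C[z^{-1},z\partial_t,z\partial_\tau]$, and its exponential lies in $\C[z^{-1},z\partial_t,z\partial_\tau][\![\lambda^{-1}]\!]$. Preservation of (twisted) Lagrangian cones then follows by expanding in $\lambda^{-1}$ and arguing that each coefficient is an infinitesimal symmetry of $\cL$, using that $z\partial_t$ and $z\partial_\tau$ along divisor directions generate such symmetries via the divisor/string equation.

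For (2), the plan is a direct scalar computation based on the Stirling-type functional equation $G(\lambda+z)-G(\lambda)=z\log\lambda$ (valid modulo an explicit constant, as the asymptotic form of $\log\Gamma(\lambda/z+1)-\log\Gamma(\lambda/z)=\log(\lambda/z)$). Iterating yields $G(\lambda+kz)-G(\lambda)=z\sum_{c=1}^{k}\log(\lambda+(c-1)z)$ for $k\geq 0$, together with the reciprocal identity for $k<0$. Summing over Chern roots $\delta$ of $W$ and noting that the Chern roots of $W(D)$ are $\delta+u$ recovers precisely the product ratio on the right of \eqref{eqn:gamma}, interpreted via $G$-regularization. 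The quotient $\Delta_W^{\lambda+u+kz}/\Delta_{W(D)}^\lambda$ encodes the convergent $\lambda^{-1}$-tail of $G_W^{\lambda+u+kz}-G_{W(D)}^\lambda$ via the Coates--Givental expression $\log\Delta_{(W,\te^{-1})}(\lambda,z)=\sum_{l,m\geq 0}s_{l+m-1}^{-}(\lambda)\tfrac{B_m}{m!}\ch_l(W)z^{m-1}$, while $H_W^{\lambda+u+kz}-H_{W(D)}^\lambda$ accounts for the singular $\lambda\log\lambda$, $\log\lambda$, and $(\log\lambda)\,c_1(W)$ pieces; the $z\partial_{c_1(W)}$ derivations inside $H$ pair with the divisor-shift factor $e^{-(\log\lambda)\,\partial_{c_1(W)}}$ of $e^{S_W^\lambda/z}$ in $\Delta_W^\lambda$ so that they drop out, leaving a purely scalar identity.

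The main obstacle will be the last point: confirming that the operator-valued contributions—the $z\partial_{c_1(W)}$ terms in $H_W^{\lambda+u+kz}-H_{W(D)}^\lambda$ and the corresponding shift operators hidden inside $e^{S_W^{\lambda+u+kz}/z}/e^{S_{W(D)}^\lambda/z}$—cancel exactly, so that \eqref{eqn:gamma} reduces to a scalar identity between formal asymptotic expansions. Together with the careful bookkeeping of $\log\lambda$ terms already described, this is the crux of the computation. The cone-preservation statement in (1), while morally clear, will also require some care to state rigorously within the formal Givental formalism, since the operator mixes factors of differing $z$- and $\lambda$-orders.
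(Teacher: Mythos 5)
Your plan for part (1) matches the paper's proof: expand $\log(\lambda+z\partial_u)=\log\lambda+\log(1+z\partial_u/\lambda)$, use $c_1(W(D))=c_1(W)+\rank(W)u$ to cancel the $\log\lambda$-terms, observe that what remains is $O(\lambda^{-1})$ with coefficients in $\C[z^{-1},z\partial_t,z\partial_\tau]$, and then invoke a cone-preservation lemma (the paper cites \cite[Lemma 2.7]{IK:quantum} for the last step).

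For part (2) there is a genuine error, and a key simplifying identity you did not write down. The functional equation you use, $G(\lambda+z)-G(\lambda)=z\log\lambda$, is wrong; the correct one is $G(\lambda+z)-G(\lambda)=z\log(\lambda+z)$. Since $z^{-1}G(\lambda)$ is the asymptotic expansion of $\log\Gamma(\lambda/z+1)$, not of $\log\Gamma(\lambda/z)$ (this is why the coefficient of $\log\lambda$ in $z^{-1}G$ is $+\tfrac12$, not $-\tfrac12$), the recursion $\Gamma(x+1)=x\Gamma(x)$ applied at $x=\lambda/z+1$ gives the factor $\log(\lambda/z+1)$. Iterating produces $\frac{G(\lambda+kz)-G(\lambda)}{z}=\sum_{c=1}^k\log(\lambda+cz)$, which is exactly the paper's $\sum_{c=-\infty}^k-\sum_{c=-\infty}^0$ form; your indexing gives $\sum_{c=0}^{k-1}\log(\lambda+cz)$ and would produce $\prod_{c=0}^{k-1}(\lambda+u+\delta+cz)$ rather than $\prod_{c=1}^{k}(\lambda+u+\delta+cz)$. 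The difference $z\log(1+z/\lambda)$ is not a constant, so the ``modulo a constant'' caveat does not save it. Second, you flag the cancellation of the $z\partial_{c_1(W)}$-derivations as ``the crux'' but stop short of resolving it; the clean way through (and the paper's proof) is to note that $H_W^\lambda+S_W^\lambda=\rank(W)(\lambda\log\lambda-\lambda+\tfrac z2\log\lambda)+(\log\lambda)c_1(W)$ is already free of derivations, so $z\log\Delta_{(W,\te^{-1})}^\lambda=G_W^\lambda-H_W^\lambda-S_W^\lambda$ is a \emph{scalar} identity, and hence $\Delta_W^\lambda=e^{(G_W^\lambda-H_W^\lambda)/z}$; the left side of \eqref{eqn:gamma} collapses to $e^{(G_W^{\lambda+u+kz}-G_{W(D)}^\lambda)/z}$ at once, with no operator bookkeeping needed.
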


\begin{proof}
A direct computation shows that $z^{-1} (H_W^{\lambda + z\partial_u} - H_{W(D)}^\lambda)$ is equal to
\[
\frac{\rank(W)}{z} \cdot \left( \left( \lambda + z\partial_u + \frac{z}{2} \right) \log\left(1+\frac{z\partial_u}{\lambda}\right) - z\partial_u \right) + \frac{z\partial_{c_1(W)}}{z} \log\left( 1 + \frac{z\partial_u}{\lambda} \right).
\]
This belongs to $\lambda^{-1} \cdot \C[z^{-1},z\partial_{\tau},z\partial_t][\![\lambda^{-1}]\!]$, and hence its exponent is a well-defined operator which preserves Lagrangian cones; see Lemma \cite[Lemma 2.7]{IK:quantum}.

We have the following identities:
\begin{gather*}
\frac{G(\lambda+kz) - G(\lambda)}{z} = \sum_{c=-\infty}^k \log(\lambda+cz) - \sum_{c=-\infty}^0 \log(\lambda+cz),	\\
G_{W\otimes L}^\lambda = G_W^{\lambda+c_1(L)},	\qquad
z\log\left( \Delta_{(W,\te^{-1})}^\lambda\right) = G_W^\lambda - H_W^\lambda - S_W^\lambda.	
\end{gather*}
Using these identities, the logarithm of the left-hand side of \eqref{eqn:gamma} is computed as follows:
\begin{align*}
&z^{-1} \left( H_W^{\lambda+u+kz} + S_W^{\lambda+u+kz} + z\log\left(\Delta_{(W,\te^{-1})}^{\lambda+u+kz}\right) - H_{W(D)}^\lambda - S_{W(D)}^\lambda - z\log\left(\Delta_{(W(D),\te^{-1})}^\lambda\right) \right)	\\
=&\ z^{-1} \left( G_W^{\lambda+u+kz} - G_{W(D)}^\lambda \right)	\\
=&\ \sum_{ \substack{\delta\colon\mathrm{Chern\ roots} \\ \mathrm{of\ } W} } \left( \sum_{c=-\infty}^k \log(\lambda+u+\delta+cz) - \sum_{c=-\infty}^0 \log(\lambda+u+\delta+cz) \right).
\end{align*}
By taking exponentials, we obtain \eqref{eqn:gamma}.
\end{proof}

\subsection{Proof of Theorem \ref{thm:twist_mirror}}
As discussed at the end of Section \ref{subsec:big_J}, we prove Theorem \ref{thm:twist_mirror} only for $\J_{V\oplus W}^\mu(\btau,z)$.
The proof is based on that of \cite[Theorem 1.1]{IK:quantum}.

Since $V_1,\dots,V_K$ are assumed to be generated by global sections, there exist vector bundles $\cQ_1,\dots,\cQ_K$ and non-negative integers $s_1,\dots,s_K$ which fit into the following short exact sequences
\begin{align*}
0\to V_i\to \cO^{\oplus s_i}\to\cQ_i\to0	
\end{align*}
for $1\leq i\leq K$.
Without loss of generality, we can assume that $s_i\geq2$ for any $i$.
We set $\vecO = (\cO^{\oplus s_1},\dots,\cO^{\oplus s_K})$.
By definition, we have $\XLO\cong B\times\prod_{i=1}^K\PP^{s_i-1}$.

From Lemma \ref{lem:vector}, we can see that the function
\begin{equation}
\label{eqn:long}
\left. \left( \prod_{i=1}^K \Delta_{\cQ_i}^{\mu_i} \right) \left( \prod_{i=K+1}^N \left( \Delta_{W_i}^{\mu_i} \right)^{-1} \right) \J_{V\oplus W}^\mu(\btau,z) \right|_{z\to-z} \cdot (-z)
\end{equation}
is a $\C[\![\Eff(B)]\!][\![\btau,\mu^{-1}]\!]$-valued point of $\cL_B$.
We apply the following lemma to this function.

\begin{lemma}[{\cite[Lemma 2.8]{IK:quantum}}]
Let $X$ be a smooth projective variety, $\{\phi_i\}_{0\leq i\leq s}$ be a basis of $H^*(X)$ and $\tau=\sum_{i=0}^s \tau^i\phi_i\in H^*(X)$.
Let $-zI(\tau,x,-z)$ be a $\C[\![\Eff(X)]\!][\![\tau,x]\!]$-valued point of $\cL_X$ such that
\[
I(\tau,0,z)|_{Q=0} = 1 + \frac{\tau}{z} + O(z^{-2}).
\]
Then there exists a differential operator $F\in\sum_{i=0}^s\C[\![\Eff(X)]\!][\![\tau,x]\!]z\partial_{\tau^i}$ satisfying $e^{F/z}J_X(\tau,z)=I(\tau,x,z)$.
\end{lemma}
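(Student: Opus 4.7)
The plan is to construct $F$ by induction on the total degree in the formal variables $(x, Q)$, where I assign $\deg Q^d$ via the valuation on $\Eff(X)$ coming from a K\"ahler class and set $\deg x_i = i$. At order zero, i.e.\ at $(x,Q)=0$, the hypothesis $I(\tau,0,z)|_{Q=0} = 1 + \tau/z + O(z^{-2})$ together with $-zI(\tau,0,-z)|_{Q=0}$ being a point on the classical cone $\cL_X|_{Q=0}$ forces $I|_{(x,Q)=0}$ to equal $J_X(\tau,z)|_{Q=0}$: on the classical (i.e.\ $Q=0$) cone, a point with prescribed polynomial-in-$z$ part is unique, and both sides share the positive part $-z + \tau$. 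Hence $F = 0$ works modulo $(x,Q)$, and the induction starts with $F_{\leq 0}=0$.

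For the inductive step, suppose $F_{\leq n} = \sum_{i=0}^s c^i_{\leq n}(\tau, x, Q)\, z\partial_{\tau^i}$ has been built so that $e^{F_{\leq n}/z} J_X \equiv I \pmod{(x,Q)^{n+1}}$. Write $\phi := e^{F_{\leq n}/z} J_X$; since the coefficients $c^i_{\leq n}$ carry no $z$-dependence, the operator $e^{F_{\leq n}/z}$ implements the time-$1$ flow of the vector field $\sum_i c^i_{\leq n}\partial_{\tau^i}$ on $\tau$-space, so $\phi = J_X(\Phi_{\leq n}(\tau,x,Q), z)$ for a formal reparametrization $\Phi_{\leq n}$ close to the identity, and $\phi$ still lies on $\cL_X$. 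The residual $R := I - \phi$ has $(x,Q)$-degree $\geq n+1$, and its leading homogeneous part is a tangent vector to $\cL_X$ at $\phi$. I would then invoke the Lagrangian cone property $\cL_X \cap T_\phi \cL_X = z\,T_\phi \cL_X$ together with the structural description of $T_\phi\cL_X$ as the free $\C[\![z]\!]$-module $\bigoplus_i \C[\![z]\!] \cdot \partial_{\tau^i}\bigl(-zJ_X(\Phi_{\leq n},-z)\bigr)$, and decompose the leading part of $R$ uniquely in the form $\sum_{i=0}^s g^i_{n+1}(\tau,x,Q)\, \bigl(z\partial_{\tau^i} J_X\bigr)$ with coefficients $g^i_{n+1} \in \C[\![\Eff(X)]\!][\![\tau,x]\!]$. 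Setting $F_{\leq n+1} := F_{\leq n} + \sum_i g^i_{n+1}\, z\partial_{\tau^i}$ then yields $e^{F_{\leq n+1}/z} J_X \equiv I \pmod{(x,Q)^{n+2}}$, completing the induction.

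The hard part will be justifying that the coefficients $g^i_{n+1}$ produced by the inductive step are genuinely independent of $z$. The ruling property alone only places $R$ in $z\,T_\phi\cL_X$, which a priori permits coefficients in $\C[\![z]\!]$. The crucial extra input is that the positive part (in $z$) of $-zI(-z)$ has the shape $-z + t(\tau, x, Q)$ for some $t$ independent of $z$ at every order in $(x,Q)$; this ``$J$-function-like'' normal form is the content of the Birkhoff factorization and is forced by the hypothesis propagated through the cone structure. Once this $z$-independence is secured, the entire construction of $F$ amounts to inverting the formal diffeomorphism $\tau \mapsto t(\tau, x, Q)$ (which is the identity at $(x,Q)=0$) and taking its formal logarithm as a vector field $\sum_i c^i(\tau,x,Q)\partial_{\tau^i}$ on $\tau$-space; multiplication by $z$ then produces the desired $F$.
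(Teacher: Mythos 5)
This lemma is quoted from \cite[Lemma~2.8]{IK:quantum} without proof in the present paper, so there is no internal argument to compare against; I evaluate your sketch on its own merits.

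The high-level plan is reasonable: the base case at $(x,Q)=0$, the ruling property $\cL_X\cap T_\phi\cL_X = zT_\phi\cL_X$, and the description of $T_\phi\cL_X$ as a free $\C[\![z]\!]$-module are all the right tools, and you correctly locate the crux: the decomposition of the leading part of $R$ along $\{z\partial_{\tau^i}J_X\}$ a priori yields $\C[\![z]\!]$-coefficients, whereas the lemma demands coefficients in $\C[\![\Eff(X)]\!][\![\tau,x]\!]$, free of $z$.

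The problem is that you do not close this gap. You assert that the $\cH_+$-part of $-zI(\tau,x,-z)$ must have the $z$-free shape $-z + t(\tau,x,Q)$ at every order in $(x,Q)$ and that this ``is forced by the hypothesis propagated through the cone structure'' and is ``the content of the Birkhoff factorization.'' Neither claim is substantiated, and the first does not follow from the hypothesis exactly as stated: the definition of a $\C[\![\Eff(X)]\!][\![\tau,x]\!]$-valued point of $\cL_X$ permits, for instance, the point with $\cH_+$-part $-z + \tau + xz$, which satisfies $I(\tau,0,z)|_{Q=0}=1+\tau/z+O(z^{-2})$ yet cannot equal $e^{F/z}J_X(\tau,z)$ for any $z$-independent $F$, since the latter always has $z$-free $\cH_+$-part $-z+\Phi(\tau,x,Q)$. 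The $z$-independence of the positive part has to be treated as part of the hypothesis, or established separately for the specific input to which the lemma is applied; invoking Birkhoff factorization does not supply it, because Birkhoff only splits a loop into its $z^{\le 0}$- and $z^{>0}$-halves and says nothing about the $z^{>0}$-half vanishing. Finally, as you yourself observe at the end, once the $z$-independence of $t$ is granted the lemma collapses to inverting the formal diffeomorphism $\tau\mapsto t(\tau,x,Q)$ and taking its logarithm as a $\tau$-vector field -- a direct argument that also makes your preceding induction on $(x,Q)$-degree unnecessary. The genuine mathematical content of the lemma lives in the $z$-independence of $t$, and that is precisely the step your argument leaves unproved.
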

We can write \eqref{eqn:long} as $\exp(F(\btau,\mu,z\partial_\tau)/z) J_B(\tau,z)$ for some $F(\btau,\mu,z\partial_\tau)\in\sum_{i\in I}\C[\![\Eff(B)]\!][\![\btau,\mu^{-1}]\!] z\partial_{\tau^i_0}$.
We introduce the following function:
\begin{multline*}
I^\mu := \left( \prod_{i=1}^K \left( \Delta_{\cQ_i(D_i)}^{\mu_i} \right)^{-1} \cdot e^{(H_{\cQ_i}^{\mu_i+z\partial_{t_i}}-H_{\cQ_i(D_i)}^{\mu_i})/z} \right) \cdot \left( \prod_{i=K+1}^N \Delta_{W_i(D_i)}^{\mu_i+u_i^0} \cdot e^{(H_{W_i(D_i)}^{\mu_i+u_i^0}-H_{W_i}^{\mu_i+z\partial_{t_i}})/z} \right) \\
\cdot e^{F(\btau,\mu+z\partial_t,z\partial_\tau)/z} J_{\XLO}(t,\tau,z)
\end{multline*}
where $u_i^0:=-\lambda_i+\sum_{i=1}^K D_j^\vee(D_i)\lambda_j$ is the image of $u_i$ under the projection $H^2_\TL(\XLO)\to H^2_\TL(\pt)$, and $J_{\XLO}(t,\tau,z)$ is the $J$-function at the parameter $\tau+\sum_{i=1}^Nt_iu_i$:
\[
J_{\XLO}(t,\tau,z) = e^{\sum_{i=1}^Nt_iu_i/z}\sum_{\ell\in\LL} \frac{q^\ell e^{\sum_{i=1}^N D_i(\ell) \cdot t_i}}{\prod_{i=1}^K\prod_{c=1}^{D_i(\ell)}(u_i+cz)^{s_i}} J_B(\tau,z).
\]
From the mirror theorem for split toric bundles \cite{Brown}, it can be seen that the function $-zJ_{\XLO}(t,\tau,-z)$ is a point on $\cL_{\XLO}$.
For any $f\in H^*(\XLO)[z\partial_t][\![\Eff(B)]\!][\![\btau,\mu^{-1}]\!]$, we have
\[
f(z\partial_t) J_{\XLO}(t,\tau,z) = \sum_{\ell\in\LL} \frac{q^\ell e^{\sum_{i=1}^N \left(D_i(\ell)\cdot t_i + \frac{t_iu_i}{z}\right)}}{\prod_{i=1}^K\prod_{c=1}^{D_i(\ell)}(u_i+cz)^{s_i}} f(u+D(\ell)z) J_B.
\]
Therefore, $I^\mu$ can be computed as 
\begin{align*}
I^\mu 
=&\ \sum_{\ell\in\LL} \frac{q^\ell e^{\sum_{i=1}^N \left(D_i(\ell)\cdot t_i + \frac{t_iu_i}{z}\right)}}{\prod_{i=1}^K\prod_{c=1}^{D_i(\ell)}(u_i+cz)^{s_i}} \left( \prod_{i=1}^K \left( \Delta_{\cQ_i(D_i)}^{\mu_i} \right)^{-1} \cdot e^{(H_{\cQ_i}^{\mu_i+u_i+D_i(\ell)z}-H_{\cQ_i(D_i)}^{\mu_i})/z} \right) 
\\
&\cdot \left( \prod_{i=K+1}^N \Delta_{W_i(D_i)}^{\mu_i+u_i^0} \cdot e^{(H_{W_i(D_i)}^{\mu_i+u_i^0}-H_{W_i}^{\mu_i+u_i+D_i(\ell)z})/z} \right) \cdot e^{F(\btau,\mu+u+D(\ell)z,z\partial_\tau)}J_B(\tau,z) 
\\
=&\ \sum_{\ell\in\LL} \frac{q^\ell e^{\sum_{i=1}^N \left(D_i(\ell)\cdot t_i + \frac{t_iu_i}{z}\right)}}{\prod_{i=1}^K\prod_{c=1}^{D_i(\ell)}(u_i+cz)^{s_i}} \left( \prod_{i=1}^K  \frac{\Delta_{\cQ_i}^{\mu_i+u_i+D_i(\ell)z}}{\Delta_{\cQ_i(D_i)}^{\mu_i}}  \cdot e^{(H_{\cQ_i}^{\mu_i+u_i+D_i(\ell)z}-H_{\cQ_i(D_i)}^{\mu_i})/z} \right) 
\\
&\cdot \left( \prod_{i=K+1}^N \frac{\Delta_{W_i}^{\mu_i+u_i+D_i(\ell)z}}{\Delta_{W_i(D_i)}^{\mu_i+u_i^0}} \cdot e^{(H_{W_i}^{\mu_i+u_i+D_i(\ell)z}-H_{W_i(D_i)}^{\mu_i+u_i^0})/z} \right)^{-1} \cdot \J_{V\oplus W}^{\mu+u+D(\ell)z}
\\
=&\ \sum_{\ell\in\LL} q^\ell e^{\sum_{i=1}^N \left(D_i(\ell)\cdot t_i + \frac{t_iu_i}{z}\right)} \left( \prod_{i=1}^K\prod_{c=1}^{D_i(\ell)} \frac{\prod_{ \substack{\delta\colon\mathrm{Chern\ roots} \\ \mathrm{of\ } \cQ_i} }(\mu_i+u_i+\delta+cz)}{(u_i+cz)^{s_i}} \right)	\\
&\cdot \frac{1}{\prod_{i=K+1}^N\prod_{c=1}^{D_i(\ell)}\prod_{ \substack{\delta\colon\mathrm{Chern\ roots} \\ \mathrm{of\ } W_i} }(\mu_i+u_i+\delta+cz)} \cdot \J_{V\oplus W}^{\mu+u+D(\ell)z}.
\end{align*}
Here Lemma \ref{lem:H} (2) is used for the last equality.
Since $\exp((H_{\cQ_i}^{\mu_i+z\partial_{t_i}}-H_{\cQ_i(D_i)}^{\mu_i})/z)$ and $\exp((H_{W_i(D_i)}^{u_i^0}-H_{W_i}^{u_i^0+z\partial_{t_i}})/z)$ preserve Lagrangian cones (Lemma \ref{lem:H} (1)), $-z \cdot I^\mu|_{z\to-z}$ lies in the Givental cone for $(\vecE,\vecbc)$-twisted Gromov-Witten theory of $\XLO$ where $(\vecE,\vecbc)$ denotes the following collection:
\[
(E_i,\bc^i) =
\begin{cases}
(Q_i(D_i),e_{\mu_i})			&\text{for } 1\leq i\leq K,	\\
(W_i(D_i),e_{\mu_i+u_i^0}^{-1})	&\text{for } K+1\leq i\leq N.
\end{cases}
\]
Since the non-equivariant limit $\lim_{\mu\to0}I^\mu$ with respect to $\TLd$ exists, we can apply Theorem \ref{thm:subvariety} to $-zI^\mu(-z)$ and obtain a point on $\cL_{\XLV,(\cW,e_\lambda^{-1})}$, which coincides with $-z(I_{V\oplus W}^\mu)_{\tw}\sphat(-z)$.
This proves Theorem \ref{thm:twist_mirror}.

\begin{remark}
For convenience, we list the rings to which the functions introduced in this section belong.
\begin{align*}
\J_{V\oplus W}^\mu(\btau,z)
&\in	H^*(B)[\mu,z,z^{-1}][\![\Eff(B)]\!][\![\btau]\!],		\\
\frac{ \prod_{i=1}^K \Delta_{\cQ_i}^{\mu_i} }{ \prod_{i=K+1}^N (\Delta_{W_i}^{\mu_i})^{-1} }  \J_{V\oplus W}^\mu(\btau,z)
&\in	H^*(B)[\mu^{-1},z,z^{-1}][\![\Eff(B)]\!][\![\btau]\!],	\\
I^\mu(t,\btau,z)
&\in	H^*(\XLO)[\mu](\lambda)(\!(z)\!)[\![\Eff(B)\oplus\LL_\eff]\!][\![t,\btau]\!]	\\
(&\subset	H^*(\XLO)(\mu,\lambda)(\!(z)\!)[\![\Eff(B)\oplus\LL_\eff]\!][\![t,\btau]\!],)	\\
(\J_{V\oplus W}^\mu(\btau,z))_{\tw}\sphat
&\in	H^*(\XLV)(\lambda)(\!(z)\!)[\![\Eff(B)\oplus\LL_\eff]\!][\![t,\btau]\!].
\end{align*}
\end{remark}

\section{Mirror theorem for toric bundles}
\label{sec:main_result}
In this section, we will prove the mirror theorem (Theorem \ref{thm:mirror_thm}) for (possibly non-split) toric bundles.
Throughout this section, we fix the following data:
\begin{itemize}
\item a smooth toric data $\sfL=(\LL^\vee,D\colon\Z^N\to\LL^\vee,\omega)$;
\item a smooth projective variety $B$;
\item vector bundles $V_1,\dots,V_N$ over $B$ whose duals are generated by global sections.
\end{itemize}
We let $-\lambda_i\in H^2_\TL(\pt)$ be the equivariant parameter corresponding to the $i$-th projection $\TL\to\C^\times$.

\subsection{Main theorem}
We consider the diagonal $\TL$-action on $V$.
Let $I_V^\lambda(x,z)$ be  a function lying in
\[
H^*_\TL(V)[z,z^{-1}][\![\Eff(B)]\!][\![x]\!] = H^*(B)[\mu,z,z^{-1}][\![\Eff(B)]\!][\![x]\!]
\]
such that $-zI_V^\lambda(x,-z)$ is a $H^*_\TL(\pt)[\![\Eff(B)]\!][\![x]\!]$-valued point of $\cL_{V,\TL}$.
We define
\[
(I_V^\lambda)\sphat\ (t,x,y,z) = e^{\sum_{i=1}^Nt_iu_i/z} \sum_{\ell\in\LL} \frac{\tq^\ell e^{\sum_{i=1}^ND_i(\ell)\cdot t_i}}{\prod_{i=1}^N\prod_{c=1}^{D_i(\ell)}\prod_{ \substack{\delta\colon\mathrm{Chern\ roots} \\ \mathrm{of\ } V_i} } (u_i + \delta + cz) } \cdot I_V^{u+D(\ell)z}
\]
where $I_V^{u+D(\ell)z} := (I_V^\lambda)|_{\lambda_i\to u_i+D_i(\ell)z}$ and $\tq$ denotes a formal variable for $\C[\![\LL_\eff]\!]$.	
Note that $(I_V^\lambda)\sphat\ $ can be interpreted as an element of 
\[
H^*_\TL(\XLV)_{\loc}(\!(z^{-1})\!)[\![\extEff(\XLV)]\!][\![t,x,y]\!]
\]
via the identification \eqref{eqn:extEff}.
We can now state our main result.

\begin{theorem}
\label{thm:mirror_thm} 
The function $-z\cdot (I_V^\lambda)\sphat\ (-z)$ is a $\Frac(H^*_\TL(\pt))[\![\extEff(\XLV)]\!][\![x,y]\!]$-valued point on $\cL_{\XLV,\TL}$.
\end{theorem}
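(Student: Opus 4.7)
The plan is to apply the characterization theorem \ref{thm:characterization} to the function $\f(z):=-z\cdot(I_V^\lambda)\sphat\ (-z)$, which reduces the statement to verifying the three conditions \textbf{(C1)}, \textbf{(C2)}, \textbf{(C3)}. The normalization $\f|_{(\cQ,x)=0}=-z$ follows by isolating the $\ell=0$, $t=0$, $x=0$ term in the defining series and applying the input hypothesis on $I_V^\lambda$, while membership in the required ring follows from the identification \eqref{eqn:extEff} together with the polynomial dependence of $I_V^\lambda$ on the equivariant parameters.

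The conditions \textbf{(C1)} and \textbf{(C2)} are verified by direct calculation. For \textbf{(C1)}, fix $\alpha\in F_\sfL$. By Corollary \ref{cor:coh}, the factor $(\iota_\alpha^*u_i+\delta+cz)^{-1}$ with $i\in\alpha$ has no $\lambda$-dependence, so its Laurent expansion in $z$ has coefficients in $H^*(\XLV_\alpha)$ and contributes only poles at $\{0,\infty\}$; for $i\notin\alpha$, the $\lambda$-part of $\iota_\alpha^*u_i$ equals $\lambda_{\alpha\beta}$ exactly when $i=i_{\alpha\beta}$ for some $\beta\in\adj(\alpha)$, by Lemma \ref{lem:dab_comparison}\,(2), producing the permitted poles at $z=\lambda_{\alpha\beta}/k$ after the sign flip $z\to-z$. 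For \textbf{(C2)}, the principal part of $\iota_\alpha^*\f$ at $z=\lambda_{\alpha\beta}/k$ is extracted by the change of summation index $\ell=\ell'+k\cdot d_{\alpha\beta}=\ell'+k\cdot D_{\alpha,i_{\beta,\alpha}}^\vee$ (Lemma \ref{lem:dab_comparison}\,(1)); the transformation $\pabb^*\iota_\beta^*u_i=\paba^*\iota_\alpha^*u_i-D_i(d_{\alpha\beta})c_1^\TL(L_{\alpha,\beta})$ from Lemma \ref{lem:dab_comparison}\,(3) reorganizes the $\alpha$-side denominators for $\ell$ into the $\beta$-side denominators for $\ell'$ multiplied by $C_{\alpha,\beta}(k)^{-1}\cdot(-kz+c_1^\TL(L_{\alpha,\beta}))$, and the substitution $\mu_i\mapsto u_i+D_i(\ell)z$ in $I_V^{u+D(\ell)z}$ matches the evaluation $z=c_1^\TL(L_{\alpha,\beta})/k$ appearing in $\iota_\beta^*\f$; summing the residues recovers exactly the recursion stated in \textbf{(C2)}.

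The main work lies in \textbf{(C3)}, which I would deduce from Theorem \ref{thm:twist_mirror}. The fixed locus $\XLV_\alpha$ is the fiber product of the projective bundles $\{\PP(V_i)\}_{i\in\alpha}$ over $B$, with normal bundle $N_\alpha=\bigoplus_{i\notin\alpha}V_i\otimes L_i$. I would apply Theorem \ref{thm:twist_mirror} with the toric data determined by the basis $\{D_i\}_{i\in\alpha}$ of $\LL^\vee$, the ``base'' bundles $\{V_i\}_{i\in\alpha}$, and the ``twist'' bundles $\{V_i\}_{i\notin\alpha}$ labeled by the characters $D_i\in\LL^\vee$, so that the twisting bundle $\cW=\bigoplus_{i\notin\alpha}V_i(D_i)$ of Theorem \ref{thm:twist_mirror} equals $N_\alpha$. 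A term-by-term comparison then identifies $\iota_\alpha^*(I_V^\lambda)\sphat$ with the twisted $I$-function of Theorem \ref{thm:twist_mirror}: the exponent, the denominator, and the shift $\mu_i\mapsto u_i+D_i(\ell)z$ all match, using in particular that the definition of $u_i$ for $i\notin\alpha$ in Theorem \ref{thm:twist_mirror} coincides with $\iota_\alpha^*u_i$ from Corollary \ref{cor:coh}. The principal obstacle is this bookkeeping---keeping track of the dual-basis identification $\{D_i\}_{i\in\alpha}\leftrightarrow\{D_{\alpha,j}^\vee\}_{j\in\alpha}$---and organizing the proof so that Theorem \ref{thm:twist_mirror} can be invoked cleanly on the appropriate fiber-product presentation of $\XLV_\alpha$.
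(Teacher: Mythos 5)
Your proposal follows the same route as the paper: reduce Theorem~\ref{thm:mirror_thm} to the three conditions of Theorem~\ref{thm:characterization}, verify \textbf{(C3)} by identifying $\iota_\alpha^*(I_V^\lambda)\sphat$ with the twisted $I$-function of Theorem~\ref{thm:twist_mirror} for the toric data $(\LL^\vee,D_\alpha\colon\Z^\alpha\to\LL^\vee,\omega)$ with base bundles $\{V_i\}_{i\in\alpha}$ and twist bundles $\{V_i\}_{i\notin\alpha}$, and verify \textbf{(C1)}, \textbf{(C2)} by direct computation using Corollary~\ref{cor:coh} and Lemma~\ref{lem:dab_comparison}. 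That much is exactly the paper's argument.

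However, your sketch of \textbf{(C2)} omits the step that does most of the work. After the reindexing $\ell\to\ell'+k\dab$ and the rewriting via Lemma~\ref{lem:dab_comparison}\,(3), you obtain an expression living in $H^*_\TL(\XLV_{\alpha\cup\beta})$, whereas $\Prin_{z=-\lambda_\ab/k}\,\iota_\alpha^*(I_V^\lambda)\sphat$ lives in $H^*_\TL(\XLV_\alpha)$. The equality asserted in \textbf{(C2)} therefore requires actually computing the pushforward ${\paba}_*$ along the $\PP(V_{i_\ab})$-bundle $\XLV_{\alpha\cup\beta}\to\XLV_\alpha$, and showing that it produces the factor $R_{V_{i_\ab}}(\iota_\alpha^*u_{i_\ab}+kz)^{-1}$ that controls the principal part. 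The paper does this by first observing that $R_{V_{i_\ab}}(\iota_\alpha^*u_{i_\ab}+kz)\cdot\iota_\alpha^*(I_V^\lambda)\sphat$ is regular at $z=-\lambda_\ab/k$, then representing the relevant factor mod $R_{V_{i_\ab}}$ by a polynomial $\barA_{\ab,k;\ell}(u_{i_\ab})$ of degree $<\rank V_{i_\ab}$, and finally invoking the Segre-class identity $\sum_{m\geq0}x^{-m-1}{\paba}_*[u_{i_\ab}^m]=R_{V_{i_\ab}}(x)^{-1}$. ``Summing the residues'' in your sketch conceals exactly this computation, and it is not automatic in the non-split case (where $\rank V_{i_\ab}>1$). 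A smaller point in \textbf{(C1)}: you also need the fact that if $D_i(\ell)>0$ for some $\ell\in\LLeff$ with $i\notin\alpha$, then there exists $\beta\in\adj(\alpha)$ with $i_\ab=i$; otherwise the denominator factor with that $i$ is vacuous and contributes no pole. Your ``exactly when'' phrasing glosses over this, since a priori the $\lambda$-part of $\iota_\alpha^*u_i$ need not be of the form $\lambda_{\alpha\beta'}$ for any $\beta'$ when no such $\beta$ exists.
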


Thanks to Theorem \ref{thm:characterization}, it is enough to confirm that $(I_V^\lambda)\sphat\ $ satisfies the three conditions appearing there.
In the following three subsections, we will sequentially verify the conditions \textbf{(C3)}, \textbf{(C1)} and \textbf{(C2)}.

\subsection{Restrictions of $(I_V^\lambda)\sphat$}
In this subsection, we describe the restriction $\iota_\alpha^*(I_V^\lambda)$ for $\alpha\in F_\sfL$.
Recall from Section \ref{subsec:coh} and \ref{subsec:fixed_locus} that we have the isomorphisms
\begin{align*}
H^*_\T(\XLV)		&\cong H^*_\TL(B)[u_1,\dots,u_N]/(\cI+\cJ),	\\
H^*_\T(\XLV_\alpha)	&\cong H^*_\TL(B)[\{u_i\}_{i\in\alpha}]/\langle e_\TL(V_i\otimes L_i) \colon i\in\alpha \rangle,
\end{align*}
and the map 
\[
\iota_\alpha^*\colon H^*_\TL(B)[u_1,\dots,u_N]/(\cI+\cJ) \to H^*_\TL(B)[\{u_i\}_{i\in\alpha}]/\langle e_\TL(V_i\otimes L_i) \colon i\in\alpha \rangle
\]
is a $H^*_\TL(B)$-module morphism with
\[
\iota_\alpha^*u_i = 
\begin{cases}
u_i												&\text{if } i\in\alpha,	\\
-\lambda_i+\sum_{j\in\alpha}D_{\alpha,j}^\vee(D_i)\cdot(u_j+\lambda_j)	&\text{if } i\notin\alpha
\end{cases}
\]
where $\{D_{\alpha,i}^\vee\}_{i\in\alpha}\subset\LL$ is a dual basis of $\{D_i\}_{i\in\alpha}$.
Hence the function $\iota_\alpha^*(I_V^\lambda)\sphat\ (t,x,y,z)$ can be written as
\begin{equation}
\label{eqn:restriction}
\iota_\alpha^*(I_V^\lambda)\sphat\ (z) = e^{\sum_{i=1}^Nt_i\cdot\iota_\alpha^*u_i/z} \sum_{\ell\in\LLeff} \frac{\tq^\ell e^{\sum_{i=1}^ND_i(\ell)\cdot t_i}}{\prod_{i\in\alpha}\prod_{c=1}^{D_i(\ell)} R_{V_i}(u_i+cz)}  \cdot\frac{I_V^{\iota_\alpha^*u+D(\ell)z}}{\prod_{i\notin\alpha}\prod_{c=1}^{D_i(\ell)} R_{V_i}(\iota_\alpha^*u_i+cz)}
\end{equation}
where, for a vector bundle $V\to B$, the form $R_V(w)\in H^*_\TL(\XLV_\alpha)[w]$ is defined as
\[
R_V(w) := \prod_{ \substack{\delta\colon\mathrm{Chern\ roots} \\ \mathrm{of\ } V} } (w + \delta)= w^{\rank(V)} + c_1(V)\cdot w^{\rank(V)-1}+\cdots+c_{\rank(V)}(V).
\]
This coincides with $(I_V^\lambda)_{\tw}\sphat$, which is introduced in Section \ref{subsec:twist_FT}, associated to the following data:
\begin{itemize}
\item a smooth toric data $(\LL^\vee,D_\alpha\colon\Z^\alpha\to\LL^\vee,\omega)$;
\item vector bundles $\{V_i\}_{i\in\alpha}$ and $\{V_i\}_{i\notin\alpha}$;
\item $\{D_i\}_{i\notin\alpha}\subset\LL^\vee$.
\end{itemize}
Considering that $N_\alpha=\bigoplus_{i\notin\alpha}(V_i\otimes L_i)$, Theorem \ref{thm:twist_mirror} shows that $-z\iota_\alpha^*(I_V^\lambda)\sphat\ (-z)$ is a $\Frac(H^*_\TL(\pt))[\![\extEff(\XLV)]\!][\![t,x,y]\!]$-valued point of $\cL_{\XLV_\alpha,(N_\alpha,e_\TL^{-1})}$, which implies that $\f$ satisfies \textbf{(C3)}.

\subsection{Poles of $\iota_\alpha^*(I_V^\lambda)\sphat$}
In order to verify \textbf{(C1)}, we examine the set of poles of $\iota_\alpha^*(I_V^\lambda)\sphat\ (z)$.
Since the function $I_V^{\iota_\alpha^*u+D(\ell)z}$ belongs to
\[
H^*_\TL(\XLV_\alpha)[z,z^{-1}][\![\Eff(B)]\!][\![x]\!],
\]
all poles except for those at $z=0,\infty$ arise from the denominators in \eqref{eqn:restriction}:
\[
\{ \iota_\alpha^*u_i+\delta+cz \colon 1\leq i\leq N, c\geq1, \delta\colon\text{Chern roots of\ } V_i \}.
\]
Note that the image of $\iota_\alpha^*u_i+\delta$ under the projection $H^2_\TL(\XLV)\to H^2_\TL(\pt)$ equals 
\[
-\lambda_i + \sum_{j\in\alpha}D_{\alpha,j}^\vee(D_i)\cdot\lambda_j,
\]
which coincides with $\lambda_\ab$ if there exists $\beta\in F_\sfL$ such that $i_\ab=i$.
The condition \textbf{(C1)} for $\f$ follows from the fact that, if $D_i(\ell)>0$ for some $\ell\in\LLeff$ and $i\notin\alpha$, there exists $\beta\in\adj(\alpha)$ such that $i_\ab=i$.

\subsection{Recursion formula of $\iota_\alpha^*(I_V^\lambda)\sphat$}
In this subsection, we establish the recursion formula \eqref{eqn:recursion} for principal parts of $\iota_\alpha^*(I_V^\lambda)\sphat\ $, which implies that $-z\iota_\alpha^*(I_V^\lambda)\sphat\ (-z)$ satisfies \textbf{(C2)}.
Note that since we already know that $-z\iota_\alpha^*(I_V^\lambda)\sphat\ (-z)$ satisfies \textbf{(C1)} and \textbf{(C3)}, the following proposition finish the proof of Theorem \ref{thm:mirror_thm}.

\begin{proposition}
For any $\alpha\in F_\sfL$, $\beta\in\adj(\alpha)$ and $k\in\N$, it holds that
\begin{multline}
\label{eqn:recursion}
\Prin_{z=-\frac{\lambda_\ab}{k}}\iota_\alpha^*(I_V^\lambda)\sphat\ (z) \\
= {\paba}_* \left[ q^{k\cdot\dab} \cdot \frac{C_\ab(k)}{kz+c_1^\TL(L_\ab)} \cdot \pabb^*\iota_\beta^* (I_V^\lambda)\sphat\ \left(z=-\frac{c_1^\TL(L_\ab)}{k}\right) \right]
\end{multline}
where $C_\ab(k)$ is the element of $H^*_\TL(\XLV_\alpha)_{\loc}$ introduced in Theorem \ref{thm:characterization}.
\end{proposition}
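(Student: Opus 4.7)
The plan is to prove \eqref{eqn:recursion} by expanding both sides as power series in $\tq^\ell$ and comparing term-by-term. First, I identify which $\ell$-summands of $\iota_\alpha^*(I_V^\lambda)\sphat$ contribute to the pole at $z = -\lambda_\ab/k$. From the explicit formula \eqref{eqn:restriction}, the exponential prefactor and the factor $I_V^{\iota_\alpha^* u + D(\ell)z}$ are regular in $z$ away from $z = 0,\infty$, so $z$-poles come only from the denominators $R_{V_i}(\iota_\alpha^* u_i + cz)$. The image of $\iota_\alpha^* u_i$ under the projection $H^2_\TL(\XLV_\alpha) \to H^2_\TL(\pt)$ equals $\lambda_\ab$ precisely when $i = i_{\alpha,\beta}$ and $\beta \in \adj(\alpha)$, so the pole at $z = -\lambda_\ab/k$ originates solely from the single factor $R_{V_{i_{\alpha,\beta}}}(\iota_\alpha^* u_{i_{\alpha,\beta}} + kz)$, which is present if and only if $D_{i_{\alpha,\beta}}(\ell) \geq k$.

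Next, I reindex via the substitution $\ell = \ell' + k\dab$, with $\dab = D_{\beta, i_{\alpha,\beta}}^\vee$ from Lemma \ref{lem:dab_comparison}(1), so that $\tq^\ell = \tq^{\ell'} \cdot q^{k\dab}$ via \eqref{eqn:extEff}. The crucial cohomological identity, obtained from Lemma \ref{lem:dab_comparison}(3) combined with $D_i(\ell) = D_i(\ell') + k D_i(\dab)$, reads
\[
\paba^*\bigl(\iota_\alpha^* u_i + D_i(\ell) z\bigr) = \pabb^*\bigl(\iota_\beta^* u_i + D_i(\ell') z\bigr)\qquad\text{at}\ z = -c_1^\TL(L_\ab)/k,
\]
for every $i$. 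Consequently the exponential $e^{\sum_i t_i(\iota_\alpha^* u_i + D_i(\ell)z)/z}$ and the $I_V$-input transport verbatim from the LHS to the $\pabb^*$-image of the corresponding quantities on the RHS under this $z$-substitution.

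The key combinatorial step is the matching of the Chern-root denominators after applying $\paba^*$. By Lemma \ref{lem:dab_comparison}(2) the pole factor becomes $R_{V_{i_{\alpha,\beta}}}\bigl(u_{i_{\alpha,\beta}} + (kz + c_1^\TL(L_\ab))\bigr)$, and the projective-bundle relation $R_{V_{i_{\alpha,\beta}}}(u_{i_{\alpha,\beta}}) = e_\TL(V_{i_{\alpha,\beta}} \otimes L_{i_{\alpha,\beta}}) = 0$ in $H^*_\TL(\XLV_\acb)$ allows me to factor out $(kz + c_1^\TL(L_\ab))$, producing the simple pole on the RHS. The remaining factors $\prod_i \prod_{c=1}^{D_i(\ell') + k D_i(\dab)} R_{V_i}(\iota_\alpha^* u_i + cz)$ are split at $c = k D_i(\dab)$: the lower sub-range, together with the $c = 1, \dots, k-1$ factors of $V_{i_{\alpha,\beta}}$ left over after the pole factorization, assembles at $z = -c_1^\TL(L_\ab)/k$ into exactly $C_\ab(k)^{-1}$ by the formula in Theorem \ref{thm:characterization}, while the upper sub-range, after the shift $c \mapsto c - k D_i(\dab)$ and using $\pabb^* \iota_\beta^* u_i = \paba^*\iota_\alpha^* u_i - D_i(\dab)\, c_1^\TL(L_\ab)$, reproduces the $\pabb^*$-pullback of the $\ell'$-denominator in $\iota_\beta^*(I_V^\lambda)\sphat$.

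Finally, the projection formula $\paba_*(\paba^* f \cdot g) = f \cdot \paba_* g$ converts the term-wise identity in $H^*_\TL(\XLV_\acb)$ down to $H^*_\TL(\XLV_\alpha)$, and summing over $\ell' \in \LLeff$ yields \eqref{eqn:recursion}; here the bijection $\ell \leftrightarrow \ell - k\dab$ between $\{\ell \in \LLeff : D_{i_{\alpha,\beta}}(\ell) \geq k\}$ and $\LLeff$ holds because $\beta$ is itself an anti-cone in $\cA_\sfL$. The main obstacle is the careful Chern-root bookkeeping in the split-range analysis, confirming the precise match with the definition of $C_\ab(k)^{-1}$ and handling the cases $D_i(\dab) \leq 0$ for $i \notin \alpha \cup \beta$, where $\prod_{c=1}^{k D_i(\dab)}$ is interpreted via the reciprocal convention.
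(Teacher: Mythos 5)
Your plan correctly identifies the pole source, the reindexing $\ell\mapsto\ell'+k\dab$, and the role of Lemma \ref{lem:dab_comparison} in transporting data from $\XLV_\alpha$ to $\XLV_\beta$ through $\XLV_\acb$; this matches the paper's setup. However, there is a genuine gap in the final step: you assert that the projection formula $\paba_*(\paba^*f\cdot g)=f\cdot\paba_*g$ ``converts the term-wise identity in $H^*_\TL(\XLV_\acb)$ down to $H^*_\TL(\XLV_\alpha)$,'' but this cannot close the argument because the integrand inside $\paba_*[\cdots]$ on the right-hand side is \emph{not} a pullback from $\XLV_\alpha$: it depends genuinely on the fiber class $u_{i_\ab}$ through $c_1^\TL(L_\ab)=-u_{i_\ab}+\paba^*\iota_\alpha^*u_{i_\ab}$. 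For the $\PP^{r_{i_\ab}-1}$-bundle $\paba$ one has $\paba_*(1)=0$, so the projection formula alone gives you no way to evaluate $\paba_*$ of a class that actually involves $u_{i_\ab}$; you need an explicit formula for $\paba_*(u_{i_\ab}^n)$.

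This is precisely where the paper's proof does the real work, and it is absent from your proposal. The paper introduces the polynomial remainder $\barA_{\ab,k;\ell}(w)$ of the numerator upon division by $R_{V_{i_\ab}}(w)$ (so $\deg_w\barA<\rank V_{i_\ab}$, and $\paba^*A(u_{i_\ab})=\paba^*\barA(u_{i_\ab})$ in $H^*_\TL(\XLV_\acb)$ because $R_{V_{i_\ab}}(u_{i_\ab})=0$), expands $\dfrac{1}{kz+\paba^*\iota_\alpha^*u_{i_\ab}-u_{i_\ab}}=\sum_{m\ge0}(\iota_\alpha^*u_{i_\ab}+kz)^{-m-1}u_{i_\ab}^m$, and then uses the Segre-class identity $\paba_*(u_{i_\ab}^n)=s_{n-\rank V_{i_\ab}+1}(V_{i_\ab})$ together with the generating function $\sum_{i\ge0}x^{-i-\rank V}s_i(V)=R_V(x)^{-1}$ to resum the pushforward to $\barA(\iota_\alpha^*u_{i_\ab}+kz)/R_{V_{i_\ab}}(\iota_\alpha^*u_{i_\ab}+kz)$. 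Your heuristic of ``factoring out $(kz+c_1^\TL(L_\ab))$ using $R_{V_{i_\ab}}(u_{i_\ab})=0$'' also obscures a point: the principal part on the left is \emph{not} a simple pole in $z$ (it generally has poles of order up to $\rank V_{i_\ab}$ in $(z+\lambda_\ab/k)$), and it is exactly the Segre-class integration along the fibers of $\paba$ that turns the simple pole along the non-constant locus $kz+c_1^\TL(L_\ab)=0$ upstairs into this higher-order principal part downstairs. Without that computation, the comparison does not close.
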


\begin{proof}
Since the function $R_{V_{i_\ab}}(\iota_\alpha^*u_{i_\ab}+kz) \cdot \iota_\alpha^*(I_V^\lambda)\sphat\ (z)$ is regular at $z=-\lambda_\ab/k$, we have
\begin{equation}
\label{eqn:Prin}
\Prin_{z=-\frac{\lambda_\ab}{k}}\iota_\alpha^*(I_V^\lambda)\sphat\ (z) = \frac{q^{k\cdot\dab}}{R_{V_{i_\ab}}(\iota_\alpha^*u_{i_\ab}+kz)} \cdot \sum_{\ell\in\LLeff} \tq^\ell e^{\sum_{i=1}^ND_i(\ell)\cdot t_i} \cdot \barA_{\ab,k;\ell}(\iota_\alpha^*u_{i_\ab}+kz)
\end{equation}
where $\barA_{\ab,k;\ell}(w)\in H^*_\TL(\XLV_\alpha)_{\loc}[\![\Eff(B)]\!][\![t,x,y]\!][w]$ is the unique element satisfying
\begin{itemize}
\item the function lying in $H^*_\TL(\XLV_\alpha)_{\loc}[w][\![\Eff(B)]\!][\![t,x,y]\!]$
\begin{multline*}
A_{\ab,k;\ell}(w) := \exp\left( \frac{k\sum_{i=1}^N t_i\cdot\iota_\alpha^*u_i}{w-\iota_\alpha^*u_{i_\ab}} \right) \cdot \frac{ e^{\sum_{i=1}^ND_i(k\cdot\dab)\cdot t_i} }{\prod_{i=1}^N\prod_{c=1}^{D_i(\ell+k\cdot\dab)} R_{V_i}(\iota_\alpha^*u_i-\frac{c}{k}\iota_\alpha^*u_{i_\ab}+\frac{c}{k}w)}	\\
\cdot R_{V_{i_\ab}}(w) \cdot I_V^{\iota_\alpha^*u-\frac{D(\ell+k\cdot\dab)}{k}\iota_\alpha^*u_{i_\ab}+\frac{D(\ell+k\cdot\dab)}{k}w}
\end{multline*}
where 
\[
I_V^{\iota_\alpha^*u-\frac{D(\ell+k\cdot\dab)}{k}\iota_\alpha^*u_{i_\ab}+\frac{D(\ell+k\cdot\dab)}{k}w} 
\]
denotes the function $I_V^\lambda$ with $\lambda_i$ replaced with $\iota_\alpha^*u_i-\frac{D_i(\ell+k\cdot\dab)}{k}\iota_\alpha^*u_{i_\ab}+\frac{D_i(\ell+k\cdot\dab)}{k}w$, coincides with the function $\barA_{\ab,k;\ell}(w)$ in the quotient ring $H^*_\TL(\XLV_\alpha)_{\loc}[w][\![\Eff(B)]\!][\![t,x,y]\!]/( R_{V_{i_\ab}}(w) )$;
\item as a $(Q,t,x,y)$-series, all coefficients of $\barA_{\ab,k;\ell}(w)$ are polynomials in $w$ of degree less than $\rank(V_{i_\ab})$ with coefficients in $H^*_\TL(\XLV_\alpha)_{\loc}$.
\end{itemize}
By definition, there exist functions $\barA_{\ab,k;\ell}^n\in H^*_\TL(\XLV_\alpha)_{\loc}[\![\Eff(B)]\!][\![t,x,y]\!]$ ($0\leq n\leq\rank(V_{i_\ab})-1$) such that
\[
\barA_{\ab,k;\ell}(w) = \sum_{n=0}^{\rank(V_{i_\ab})-1} \barA_{\ab,k;\ell}^n\cdot w^n.
\]

We now proceed with the computation of the right-hand side.
From Corollary \ref{cor:coh}, it follows that 
\begin{align*}
H^*_\TL(\XLV_\acb) &\cong H^*_\TL(B)[\{u_i\}_{i\in\acb}]/\langle e(V_i\otimes L_i) \colon i\in\acb \rangle,	\\
H^*_\TL(\XLV_\beta) &\cong H^*_\TL(B)[\{u_i\}_{i\in\beta}]/\langle e(V_i\otimes L_i) \colon i\in\beta \rangle
\end{align*}
and the map $\pabb^*\colon H^*_\TL(\XLV_\beta)\to H^*_\TL(\XLV_\acb)$ is the $H^*_\TL(B)$-module morphism sending $u_i$ to $u_i$ for $i\in\beta$.
By direct calculation, we can see that
\begin{multline*}
\pabb^*\iota_\beta^* (I_V^\lambda)\sphat\ \left(z=-\frac{c_1^\TL(L_\ab)}{k}\right)  \\
= \exp\left( -\frac{k\sum_{i=1}^Nt_i\cdot\pabb^*\iota_\beta^*u_i}{c_1^\T(L_\ab)} \right) 
\cdot \sum_{\ell\in\LLeff} \frac{\tq^\ell e^{\sum_{i=1}^ND_i(\ell)\cdot t_i}I_V^{\iota_\beta^*u-\frac{D(\ell)}{k}c_1^\TL(L_\ab)}}{\prod_{i=1}^N\prod_{c=1}^{D_i(\ell)} R_{V_i}(\pabb^*\iota_\beta^*u_i-\frac{c}{k}c_1^\TL(L_\ab))}.	
\end{multline*}
From Lemma \ref{lem:dab_comparison} $(3)$, we have
\begin{align*}
\exp\left( -\frac{k\sum_{i=1}^Nt_i\cdot\pabb^*\iota_\beta^*u_i}{c_1^\T(L_\ab)} \right)		&= \exp\left( -k\sum_{i=1}^Nt_i\cdot\left(\frac{\paba^*\iota_\alpha^*u_i}{c_1^\T(L_\ab)}-D_i(\dab)\right) \right),	\\
\prod_{c=1}^{D_i(\ell)}R_{V_i}\left(\pabb^*\iota_\beta^*u_i - \frac{c}{k}c_1^\T(L_\ab)\right)	&= \prod_{c=k\cdot D_i(\dab)+1}^{k\cdot D_i(\ell+\dab)} R_{V_i}\left(\paba^*\iota_\alpha^*u_i - \frac{c}{k}c_1^\T(L_\ab)\right).
\end{align*}
Hence we have
\begin{align*}
&\ q^{k\cdot\dab} \cdot \frac{C_\ab(k)}{kz+c_1^\TL(L_\ab)} \cdot \pabb^*\iota_\beta^* (I_V^\lambda)\sphat\ \left(z=-\frac{c_1^\TL(L_\ab)}{k}\right)	\\
=&\ \frac{q^{k\cdot\dab}}{kz+c_1^\TL(L_\ab)} \cdot \exp\left( -k\sum_{i=1}^Nt_i\cdot\left(\frac{\paba^*\iota_\alpha^*u_i}{c_1^\T(L_\ab)}-D_i(\dab)\right) \right)	\\
\cdot&\ R_{V_{i_\ab}}\left( \paba^*\iota_\alpha^*u_{i_\ab} - c_1^\TL(L_\ab) \right) \cdot \sum_{\ell\in\LLeff} \frac{\tq^\ell e^{\sum_{i=1}^ND_i(\ell)\cdot t_i}I_V^{\iota_\beta^*u+\frac{D(\ell)}{k}c_1^\TL(L_\ab)}}{\prod_{i=1}^N\prod_{c=1}^{D_i(\ell+\dab)} R_{V_i}(\paba^*\iota_\alpha^*u_i-\frac{c}{k}c_1^\TL(L_\ab))}	\\
=&\ \frac{q^{k\cdot\dab}}{kz+\paba^*\iota_\alpha^*u_{i_\ab}-u_{i_\ab}} \cdot \sum_{\ell\in\LLeff} \tq^\ell e^{\sum_{i=1}^ND_i(\ell)\cdot t_i}\cdot\paba^*\barA_{\ab,k;\ell}\left( u_{i_\ab} \right).
\end{align*}
For the last equality, we use Lemma \ref{lem:dab_comparison} $(2)$ and the equality
\[
\paba^*A_{\ab,k:\ell}\left( u_{i_\ab} \right) = \paba^*\barA_{\ab,k:\ell}\left( u_{i_\ab} \right)
\]
in $H^*_\T(\XLV_\acb)_{\loc}[\![\Eff(B)]\!][\![t,x,y]\!]$.

Since $\XLV_\acb$ is isomorphic to the projectivization of the vector bundle $V_{i_\ab}\to\XLV_\alpha$, it holds that 
\[
{\paba}_*\left[u_{i_\ab}^n\right] = s_{n-\rank(V_{i_\ab})+1}(V_{i_\ab})
\] 
where $s_i$ denotes the $i$-th Segre class \cite{Fulton}. 
Here we define $s_i(V)=0$ if $i<0$.
Note that the Segre classes satisfy the following formula
\[
\sum_{i\geq0}x^{-i-\rank(V)}s_i(V) = R_V(x)^{-1}
\]
for any vector bundle $V$.
Using these formulas and the projection formula, it holds that
\begin{align*}
&\ {\paba}_*\left[ \frac{\paba^*\barA_{\ab,k;\ell}( u_{i_\ab} )}{kz+\paba^*\iota_\alpha^*u_{i_\ab}-u_{i_\ab}} \right] \\
=&\ \sum_{m\geq0} \left(\iota_\alpha^*u_{i_\ab}+kz\right)^{-m-1} \cdot {\paba}_*\left[ u_{i_\ab}^m \cdot \sum_{n=0}^{\rank(V_{i_\ab})-1}\paba^*\barA_{\ab,k;\ell}^n \cdot u_{i_\ab}^n \right]	\\
=&\ \sum_{n=0}^{\rank(V_{i_\ab})-1} \barA_{\ab,k;\ell}^n \cdot \sum_{m\geq0} \left(\iota_\alpha^*u_{i_\ab}+kz\right)^{-m-1} \cdot s_{m+n-\rank(V_{i_\ab})+1}(V_{i_\ab})	\\
=&\ \sum_{n=0}^{\rank(V_{i_\ab})-1} \barA_{\ab,k;\ell}^n \cdot \left(\iota_\alpha^*u_{i_\ab}+kz\right)^n \cdot R_{V_{i_\ab}}\left(\iota_\alpha^*u_{i_\ab}+kz\right)^{-1}	\\
=&\ \frac{\barA_{\ab,k;\ell}(\iota_\alpha^*u_{i_\ab}+kz)}{R_{V_{i_\ab}}(\iota_\alpha^*u_{i_\ab}+kz)}.
\end{align*}
From the calculations above, the right-hand side of \eqref{eqn:recursion} can be computed as follows.:
\begin{align*}
&\ {\paba}_*\left[q^{k\cdot\dab} \cdot \frac{C_\ab(k)}{kz+c_1^\TL(L_\ab)} \cdot \pabb^*\iota_\beta^* (I_V^\lambda)\sphat\ \left(z=-\frac{c_1^\TL(L_\ab)}{k}\right)\right]	\\
=&\ {\paba}_*\left[ \frac{q^{k\cdot\dab}}{kz+\paba^*\iota_\alpha^*u_{i_\ab}-u_{i_\ab}} \cdot \sum_{\ell\in\LLeff} \tq^\ell e^{\sum_{i=1}^ND_i(\ell)\cdot t_i}\cdot\paba^*\barA_{\ab,k;\ell}\left( u_{i_\ab} \right) \right]	\\
=&\ q^{k\cdot\dab} \cdot \sum_{\ell\in\LLeff} \tq^\ell e^{\sum_{i=1}^ND_i(\ell)\cdot t_i} \cdot {\paba}_*\left[ \frac{\paba^*\barA_{\ab,k;\ell}( u_{i_\ab} )}{kz+\paba^*\iota_\alpha^*u_{i_\ab}-u_{i_\ab}} \right]	\\
=&\ \frac{q^{k\cdot\dab}}{R_{V_{i_\ab}}(\iota_\alpha^*u_{i_\ab}+kz)} \cdot \sum_{\ell\in\LLeff} \tq^\ell e^{\sum_{i=1}^ND_i(\ell)\cdot t_i} \cdot \barA_{\ab,k;\ell}(\iota_\alpha^*u_{i_\ab}+kz),
\end{align*}
which coincides with \eqref{eqn:Prin}.
\end{proof}


\appendix

\section{Fourier transformation}
\label{app:Fourier}
We introduce a Fourier transform of Givental cones, which we learned from \cite{IS}, and check that the function $(I_V^\lambda)\sphat\ $ coincides with a Fourier transform of $I_V^\lambda$.
See Remark \ref{rem:Fourier} for the background.
In this appendix, we interprete $(I_V^\lambda)\sphat\ $ as a $\T/\K$-equivariant $I$-function for $\XLV$; see Remark \ref{rem:T/K}.

Let $\T=(\C^\times)^N$ be an algebraic torus, and let $X$ be a smooth semi-projective variety with a $\T$-action whose fixed point set is projective. 
For such a $\T$-variety, we can consider the \emph{extended shift operator} \cite{Iritani:blowups} $\hbS^\beta$ and $\hcS^\beta$ for each $\beta\in H_2^\T(X)$. 
Via the fundamental solution $M_X(\tau)$ for the $\T$-equivariant theory, the operators are related in the following way \cite[Proposition 2.7]{Iritani:blowups}:
\[
\hcS^\beta\circ M_X(\tau) = M_X(\tau)\circ\hbS^\beta.
\]
The extended shift operator $\hbS^\beta$ satisfies that
\[
\hbS^\beta(f(\lambda,z)\alpha) = f(\lambda-\ovbeta z,z)\hbS^\beta\alpha
\]
for any $f(\lambda,z)\in H^*_\T(\pt)[z]$ and any $\alpha\in H^*_\T(X)$, and is defined by using Gromov-Witten invariants of the Seidel space $E_{-\ovbeta}$, which is defined as the quotient
\[
E_{-\ovbeta} = X\times(\C^2\setminus0)/\C^\times
\]
where $\C^\times$ acts on $X\times(\C^2\setminus0)$ by the formula $t \cdot (x,(v_1,v_2)) = (t^{-\ovbeta}x,(tv_1,tv_2))$.
We omit the precise definition here; see \cite{Iritani:shift,Iritani:blowups} for details.

Let $F$ be a connected component of the $\T$-fixed point set $X^\T$.
Let $N_F$ be the normal bundle to $F$ in $X$, and consider its $\T$-weight decomposition $N_F = \bigoplus_{\alpha\in\Hom(\T,\C^\times)}N_{F,\alpha}$.
Let $\{ \rho_{F,\alpha,j} \}_{j=1}^{\rank N_{F,\alpha}}$ be the Chern roots of $N_{F,\alpha}$.
For $\beta\in H_2^\T(X)$, the shift operator $\hcS^\beta$ on the Givental space $\cH_{X,\T}$ satisfies that
\[
\left.\left(\hcS^\beta\f\right)\right|_F = \frac{ Q^{\beta+\sigma_F(-\ovbeta)}e^{-z\ovbeta\partial_\lambda}(\f|_F) }{ \prod_{\alpha\in\Hom(\T,\C^\times)}\prod_{j=1}^{\rank N_{F,\alpha}}\prod_{c=1}^{-\alpha\cdot\ovbeta}(\rho_{F,\alpha,j}+\alpha+cz) }
\]
for any point $\f$ on $\cH_{X,\T}$ and any connected component $F$ of $X^\T$.
Here $\sigma_F(-\ovbeta)\in H_2^\T(X,\Z)$ denotes the image of the section class of $E_{-\ovbeta}\to\PP^1$ associated with $F\subset X$ under the natural map $H_2(E_{-\ovbeta},\Z)\to H_2^\T(X,\Z)$, and $e^{-z\ovbeta\partial_\lambda}$ denotes the operator sending $f(\lambda,z)$ to $f(\lambda-(\lambda\cdot\ovbeta) z,z)$.
One can see that this formula uniquely determines $\hcS$ by using the localization formula.

Now, we assume that there is a smooth GIT quotient $Y = X/\!/\K$ for a subtorus $\K$ of $\T$, that is, $X^s=X^{ss}$ and $\K$ acts freely on $X^s$.
We denote the equivariant Kirwan map by $\kappa_Y\colon H^*_\T(X)\to H^*_{\T/\K}(Y)$.
We define the \emph{Fourier transform} of $\f\in\cH_{X,\T}^{\pol}$ to be
\[
\hat{\f} = \sum_{[\beta]\in H_2^\K(X)/H_2(X)} \kappa_Y\left( \hcS^{-\beta} e^{\sum_{i=1}^Nt_i\lambda_i/z}\f \right) \hS^\beta
\]
where $\hS^\beta$ denotes a formal variable associated to $\beta\in H_2^\K(X,\Z)$.
It is easy to see that, for any $\lambda\in H^2_\T(\pt,\Z)$ and $\beta\in H_2^\K(X)$, 
\[
(\lambda\f)\sphat\ = \left(\kappa_Y(\lambda)+z\cdot\lambda\hS\frac{\partial}{\partial\hS}\right)\hat{\f}, \quad \left(\hcS^\beta\f\right)\sphat = \hS^\beta \hat{\f}
\]
where $\lambda\hS\frac{\partial}{\partial\hS}$ is the derivation sending $\hS^\beta$ to $(\lambda\cdot\beta)\hS^\beta$.
Moreover, it is expected the following \cite{IS}: 
\begin{itemize}
\item[$(1)$] The summation $\hat{\f}$ is, as a power series in $\hS$, supported on a strictly convex cone in $H_2^\K(X)$ (described by the GIT data).
\item[$(2)$] The transform $\f\mapsto\hat{\f}$ gives a map from the $\T$-equivariant Givental cone of $X$ to the $\T/\K$-equivariant Givental cone of $Y$.
\end{itemize}
Note that this is a straightforward generalization of \cite[Conjecture 1.7]{Iritani:blowups}.

We now consider the case that $X=V=\bigoplus_{i=1}^N V_i$ with the $\T$-action described in Section \ref{sec:toric_bundle}.
Using a very ample line bundle over $B$, one can easily check that $Y = \XLV$ can be realized as a smooth GIT quotient of $V$ by $\K$.
Via the canonical splitting $H_2^\K(V) = H_2(B)\oplus H_2^\K(\pt)$, $H_2^\K(\pt) = \LL$ can be interpreted as the subgroup of $H_2^\K(V)$.
We can explicitly compute the shift operator and the equivariant Kirwan map. 
For any character $\ell\in\Hom(\C^\times,\K)\cong\LL$ and any point $\f(\lambda,z)$ on $\cH_{V,\T}$, we have
\[
\hcS^\ell \f(\lambda,z) = \frac{ \f(\lambda-D(\ell) z,z) }{ \prod_{i=1}^N \prod_{c=1}^{-D_i(\ell)} \prod_{ \substack{\delta\colon\mathrm{Chern\ roots} \\ \mathrm{of\ } V_i} } (u_i + \delta + cz) }.
\]
The equivariant Kirwan map $\kappa_{\XLV}\colon H^*_\T(V)=H^*(B)[\lambda]\to H^*_{\T/\K}(\XLV)$ sends $\phi\in H^*(B)$ to its pull-back along the projection $\XLV\to B$, and sends $\lambda_i$ to $c_1^{\T/\K}(L_i)$, where $L_i$ is the $\T/\K$-line bundle \eqref{eqn:L_i} over $\XLV$.
By a direct computation, we can see that, for any point $I_V^\lambda$ on $\cL_{V,\T}\cap\cH_{V,\T}^{\pol}$, the Fourier transform of $I_V^\lambda$ actually coincides with the function $(I_V^\lambda)\sphat\ $ introduced in Theorem \ref{thm:intro_mirror}.

\bibliographystyle{amsplain}
\bibliography{reference}

\providecommand{\bysame}{\leavevmode\hbox to3em{\hrulefill}\thinspace}
\providecommand{\MR}{\relax\ifhmode\unskip\space\fi MR }
\providecommand{\MRhref}[2]{%
  \href{http://www.ams.org/mathscinet-getitem?mr=#1}{#2}
}
\providecommand{\href}[2]{#2}
\begin{thebibliography}{10}

\bibitem{AGV:gw}
Dan Abramovich, Tom Graber, and Angelo Vistoli, \emph{Gromov-{W}itten theory of
  {D}eligne-{M}umford stacks}, Amer. J. Math. \textbf{130} (2008), no.~5,
  1337--1398.

\bibitem{AB}
M.~F. Atiyah and R.~Bott, \emph{The moment map and equivariant cohomology},
  Topology \textbf{23} (1984), no.~1, 1--28.

\bibitem{Behrend}
K.~Behrend, \emph{Gromov-{W}itten invariants in algebraic geometry}, Invent.
  Math. \textbf{127} (1997), no.~3, 601--617.

\bibitem{BV}
Nicole Berline and Mich\`ele Vergne, \emph{Classes caract\'{e}ristiques
  \'{e}quivariantes. {F}ormule de localisation en cohomologie
  \'{e}quivariante}, C. R. Acad. Sci. Paris S\'{e}r. I Math. \textbf{295}
  (1982), no.~9, 539--541.

\bibitem{Brown}
Jeff Brown, \emph{Gromov-{W}itten invariants of toric fibrations}, Int. Math.
  Res. Not. IMRN (2014), no.~19, 5437--5482.

\bibitem{Cadman}
Charles Cadman, \emph{Using stacks to impose tangency conditions on curves},
  Amer. J. Math. \textbf{129} (2007), no.~2, 405--427.

\bibitem{CCIT:computing}
Tom Coates, Alessio Corti, Hiroshi Iritani, and Hsian-Hua Tseng,
  \emph{Computing genus-zero twisted {G}romov-{W}itten invariants}, Duke Math.
  J. \textbf{147} (2009), no.~3, 377--438.

\bibitem{CCIT:mirror}
\bysame, \emph{A mirror theorem for toric stacks}, Compos. Math. \textbf{151}
  (2015), no.~10, 1878--1912.

\bibitem{CG:quantum}
Tom Coates and Alexander Givental, \emph{Quantum {R}iemann-{R}och, {L}efschetz
  and {S}erre}, Ann. of Math. (2) \textbf{165} (2007), no.~1, 15--53.

\bibitem{Elezi}
Artur Elezi, \emph{A mirror conjecture for projective bundles}, Int. Math. Res.
  Not. (2005), no.~55, 3445--3458.

\bibitem{FL}
Honglu Fan and Yuan-Pin Lee, \emph{On {G}romov-{W}itten theory of projective
  bundles}, Michigan Math. J. \textbf{69} (2020), no.~1, 153--178.

\bibitem{Fulton}
William Fulton, \emph{Intersection theory}, second ed., Ergebnisse der
  Mathematik und ihrer Grenzgebiete. 3. Folge. A Series of Modern Surveys in
  Mathematics [Results in Mathematics and Related Areas. 3rd Series. A Series
  of Modern Surveys in Mathematics], vol.~2, Springer-Verlag, Berlin, 1998.

\bibitem{Givental:mirror}
Alexander Givental, \emph{A mirror theorem for toric complete intersections},
  Topological field theory, primitive forms and related topics ({K}yoto, 1996),
  Progr. Math., vol. 160, Birkh\"{a}user Boston, Boston, MA, 1998,
  pp.~141--175.

\bibitem{Givental:symplectic}
Alexander~B. Givental, \emph{Symplectic geometry of {F}robenius structures},
  Frobenius manifolds, Aspects Math., vol. E36, Friedr. Vieweg, Wiesbaden,
  2004, pp.~91--112.

\bibitem{GP}
T.~Graber and R.~Pandharipande, \emph{Localization of virtual classes}, Invent.
  Math. \textbf{135} (1999), no.~2, 487--518.

\bibitem{HS:toric}
Tam\'{a}s Hausel and Bernd Sturmfels, \emph{Toric hyper{K}\"{a}hler varieties},
  Doc. Math. \textbf{7} (2002), 495--534.

\bibitem{Iritani:integral}
Hiroshi Iritani, \emph{An integral structure in quantum cohomology and mirror
  symmetry for toric orbifolds}, Adv. Math. \textbf{222} (2009), no.~3,
  1016--1079.

\bibitem{Iritani:periods}
\bysame, \emph{Quantum cohomology and periods}, Ann. Inst. Fourier (Grenoble)
  \textbf{61} (2011), no.~7, 2909--2958.

\bibitem{Iritani:shift}
\bysame, \emph{Shift operators and toric mirror theorem}, Geom. Topol.
  \textbf{21} (2017), no.~1, 315--343.

\bibitem{Iritani:blowups}
Hiroshi Iritani, \emph{Quantum cohomology of blowups}, 2023,
  \href{http://arxiv.org/abs/2307.13555}{\texttt{arXiv:2307.13555 [math.AG]}}.

\bibitem{IK:quantum}
Hiroshi Iritani and Yuki Koto, \emph{Quantum cohomology of projective bundles},
  2023, \href{http://arxiv.org/abs/2307.03696}{\texttt{arXiv:2307.03696
  [math.AG]}}.

\bibitem{IS}
Hiroshi Iritani and Fumihiko Sanda, private communication.

\bibitem{JTY}
Yunfeng Jiang, Hsian-Hua Tseng, and Fenglong You, \emph{The quantum orbifold
  cohomology of toric stack bundles}, Lett. Math. Phys. \textbf{107} (2017),
  no.~3, 439--465.

\bibitem{KKP:functoriality}
Bumsig Kim, Andrew Kresch, and Tony Pantev, \emph{Functoriality in intersection
  theory and a conjecture of {C}ox, {K}atz, and {L}ee}, J. Pure Appl. Algebra
  \textbf{179} (2003), no.~1-2, 127--136.

\bibitem{Liu}
Chiu-Chu~Melissa Liu, \emph{Localization in {G}romov-{W}itten theory and
  orbifold {G}romov-{W}itten theory}, Handbook of moduli. {V}ol. {II}, Adv.
  Lect. Math. (ALM), vol.~25, Int. Press, Somerville, MA, 2013, pp.~353--425.

\bibitem{Pandharipande:rational}
Rahul Pandharipande, \emph{Rational curves on hypersurfaces (after {A}.
  {G}ivental)}, no. 252, 1998, S\'{e}minaire Bourbaki. Vol. 1997/98, pp.~Exp.
  No. 848, 5, 307--340.

\bibitem{Teleman:gauge}
Constantin Teleman, \emph{Gauge theory and mirror symmetry}, Proceedings of the
  {I}nternational {C}ongress of {M}athematicians---{S}eoul 2014. {V}ol. {II},
  Kyung Moon Sa, Seoul, 2014, pp.~1309--1332.

\bibitem{Tonita}
Valentin Tonita, \emph{Twisted orbifold {G}romov-{W}itten invariants}, Nagoya
  Math. J. \textbf{213} (2014), 141--187.

\bibitem{Vistoli}
Angelo Vistoli, \emph{Intersection theory on algebraic stacks and on their
  moduli spaces}, Invent. Math. \textbf{97} (1989), no.~3, 613--670.

\end{thebibliography}
\end{document}